\documentclass[11pt,reqno,oneside]{amsart}

\usepackage[utf8]{inputenc}
\usepackage[english]{babel}

\usepackage[a4paper, total={6in, 9in}]{geometry}

\usepackage{pgf,tikz,tikz-cd,pgfplots} \pgfplotsset{compat=newest}
\usetikzlibrary{arrows}
\usetikzlibrary[patterns]
\usepackage{upgreek}
\usepackage{amsmath,amscd, amssymb, amsthm, mathrsfs}
\usepackage[abbrev,nobysame,alphabetic]{amsrefs}
\usepackage{xcolor}
\usepackage{cases}
\usepackage{hyperref}

\usepackage{verbatim}


\newtheorem{Theorem}{Theorem}[section]
\newtheorem*{Theorem*}{Theorem}
\newtheorem{Lemma}[Theorem]{Lemma}

\newtheorem{Proposition}[Theorem]{Proposition}

\newtheorem{Claim}[Theorem]{Claim}

\theoremstyle{definition}

\newtheorem{Remark}{Remark}[section]
\newtheorem*{Remark*}{Remark}

\numberwithin{equation}{section}


\newcommand{\R}{\mathbb{R}} \newcommand{\mathR}{\mathbb{R}}
\newcommand{\Rn}{{\mathR^n}}

\newcommand{\calR}{\mathcal{R}}

\newcommand{\M}{m}  

\newcommand{\s}{\hspace{0.5pt}}

\usepackage[pagewise]{lineno} 
\newcommand*\patchAmsMathEnvironmentForLineno[1]{%
	\expandafter\let\csname old#1\expandafter\endcsname\csname #1\endcsname
	\expandafter\let\csname oldend#1\expandafter\endcsname\csname end#1\endcsname
	\renewenvironment{#1}%
	{\linenomath\csname old#1\endcsname}%
	{\csname oldend#1\endcsname\endlinenomath}}%
\newcommand*\patchBothAmsMathEnvironmentsForLineno[1]{%
	\patchAmsMathEnvironmentForLineno{#1}%
	\patchAmsMathEnvironmentForLineno{#1*}}%
\AtBeginDocument{%
	\patchBothAmsMathEnvironmentsForLineno{equation}%
	\patchBothAmsMathEnvironmentsForLineno{align}%
	\patchBothAmsMathEnvironmentsForLineno{flalign}%
	\patchBothAmsMathEnvironmentsForLineno{alignat}%
	\patchBothAmsMathEnvironmentsForLineno{gather}%
	\patchBothAmsMathEnvironmentsForLineno{multline}%
	\patchAmsMathEnvironmentForLineno{subequations}%
	\patchAmsMathEnvironmentForLineno{numcases}%
}

\DeclareMathOperator{\divr}{div}
\DeclareMathOperator{\supp}{\mathrm{supp}}
\let\Re\relax
\DeclareMathOperator{\Re}{\mathrm{Re}}

\newcommand{\dif}[1]{\,\mathrm{d}{#1}}

\newcommand{\df}{\mathrm{d}}
\newcommand{\nrm}[2][]{ \| {#2} \|_{#1}}

\allowdisplaybreaks

\newcommand{\abs}[1]{\lvert #1 \rvert}          
\newcommand{\norm}[1]{\lVert #1 \rVert} 
\newcommand{\br}[1]{\langle #1 \rangle} 
\newcommand{\ol}[1]{\overline{#1}}

\newcommand{\p}{\partial}
\newcommand{\eps}{\varepsilon}

\newcommand{\bk}[1]{\left\{ #1 \right\}}

\newcounter{sidenote}
\setlength{\marginparwidth}{.8in}
\setlength\paperheight{11in}
\addtolength\voffset{-8.8mm}

\title[Fixed angle inverse scattering]{Fixed angle inverse scattering for sound speeds close to constant}

\author{Shiqi Ma}
\address{School of Mathematics, Jilin University, Changchun, China}
\email{mashiqi@jlu.edu.cn}

\author{Leyter Potenciano-Machado}
\address{Department of Mathematics and Statistics, University of Jyv\"askyl\"a, Jyv\"askyl\"a, Finland}
\email{leyter.m.potenciano@gmail.com}

\author{Mikko Salo}
\address{Department of Mathematics and Statistics, University of Jyv\"askyl\"a, Jyv\"askyl\"a, Finland}
\email{mikko.j.salo@jyu.fi}


\begin{document}

\begin{abstract}

We study the fixed angle inverse scattering problem of determining a sound speed from scattering measurements corresponding to a single incident wave. The main result shows that a sound speed close to constant can be stably determined by just one measurement. Our method is based on studying the linearized problem, which turns out to be related to the acoustic problem in photoacoustic imaging. We adapt the modified time-reversal method from [P.~Stefanov and G.~Uhlmann, {\it Thermoacoustic tomography with variable sound speed}, Inverse Problems {\bf 25} (2009), 075011] to solve the linearized problem in a stable way, and we use this to give a local uniqueness result for the nonlinear inverse problem.

	\medskip

	\noindent{\bf Keywords:}~~Inverse scattering, wave equation, time-reversal, progressive wave expansion.
	
	{\noindent{\bf 2020 Mathematics Subject Classification:}~~35R30, 35Q60, 35J05, 31B10, 78A40}.
	
\end{abstract}

\maketitle

\section{Introduction and main result} \label{sec:Intro-LS2020}

Inverse scattering problems appear in real-life phenomena and have applications in a wide range of fields such as
radar, sonar, fault detection in fiber optics, geophysical exploration, medical imaging and nondestructive testing. In this work, we study the inverse acoustic scattering problem of recovering a sound speed from fixed angle scattering measurements. Let $n\geq 2$ and $\theta\in {\mathbb{S}}^{n-1}$ be a fixed vector.
Consider $\eta\in C^{1,1}(\mathbb{R}^n)$ and $\eta-1\in H^{s_0}(\R^n)$ with {$s_0>0$ being large enough and} so that $\eta-1$ is compactly supported in $\Omega$ with $\Omega := \{ x\in \R^n \, : \,|x|<1 \}$, i.e.,
\begin{equation} \label{outside:ball}
    \eta(x)=1, \quad |x|\geq 1 - \sigma,
\end{equation}
for a fixed number $\sigma \in (0,1)$.
We also assume that for some $M>1$
\begin{equation}\label{outside:ball_z}
    M^{-1} \leq \eta(x) \leq M, \quad x\in \R^n.
\end{equation}
Now consider a plane wave solution $\tilde{\mathcal{U}}$ to the wave equation 
\begin{equation} \label{eq:1-LS2020}
    (\eta(x) \, \partial_t^2 -\Delta) \tilde{\mathcal{U}}=0  \; \; \text{in}\; \R^{n+1}, \qquad \tilde{\mathcal{U}}|_{\left\{t<-1\right\}} = \delta (t-x\cdot \theta).
\end{equation}
This model describes the propagation of scattered sound waves in an inhomogeneous medium (whose properties are described by the coefficient $\eta$) produced by the interaction of an incident plane wave of the form $\delta (t-x\cdot \theta)$ with the medium. If $c(x)$ is the sound speed in the medium then $\eta(x) = c(x)^{-2}$, but for simplicity we will refer to $\eta$ as the sound speed.

The aim of this paper is to prove that the sound speed $\eta$ is uniquely determined by boundary measurements of a solution to \eqref{eq:1-LS2020} corresponding to a fixed direction $\theta\in {\mathbb{S}}^{n-1}$.
To simplify computations, we assume that $\theta=e_n$, where $e_n$ stands for the $n$th vector of the standard basis in $\R^n$.
For $x \in \R^n$ we write $x = (y, z)$ with $y \in \R^{n-1}$ and $z \in \R$, thus $x\cdot \theta=z$.
From the mathematical viewpoint, for $T>1$, the measurements are encoded by the map
\begin{equation*} 
	\tilde{\mathcal A}: (\eta-1) \mapsto \tilde{\mathcal{U}}|_{\Sigma_T \cap \{t >z\}},
\end{equation*}
where $\tilde{\mathcal{U}}$ is the solution to \eqref{eq:1-LS2020} and $\Sigma_T$ stands for the lateral boundary of the space-time cylinder $\Omega\times (-T, T)$, that is 
\[
\Sigma_T := \partial \Omega \times (-T, T).
\]
In Proposition \ref{prop:well_posedness_z_1} we prove that there is a unique solution $\tilde{\mathcal{U}}$ to \eqref{eq:1-LS2020} in a suitable Hilbert space so that the restriction on $\Sigma_T \cap \{t >z\}$, denoted by $\tilde{\mathcal{U}}|_{\Sigma_T \cap \{t > z\}}$, is well defined.
We restrict the measurements to $\{ t > z \}$ since the scattered wave vanishes when $t < z$.

In this framework, our first main result states that if $\tilde{\mathcal A}(\eta-1) = \tilde{\mathcal A}(0)$ where $\eta$ satisfies \eqref{outside:ball}--\eqref{outside:ball_z} and is close to $1$, then $\eta \equiv 1$.
Moreover, we also derive the corresponding quantification (stability estimate) with a modulus of continuity of H\"older type.
The space $H^{-k}((-T,T), H^{\alpha}(\mathbb{S}^{n-1}))$ below is defined as the dual space of $H^k_0((-T,T), H^{-\alpha}(\mathbb{S}^{n-1}))$.

\begin{Theorem}\label{th:main_result_z}
Let $n\geq 2$ and $M > 1$. Fix some $s_0\gg n/2+2$. There exist a small $\varrho>0$, $\mu=\mu(n)\in (0,1)$, $T = T(n) > 1$, $s_1 > s_0$, and $C >0$ such that
\[
\norm{\eta-1}_{H^{s_0}(\R^n)}\leq C \norm{\tilde{\mathcal A}(\eta-1)- \tilde{\mathcal A}(0)}_{ H^{-5}((-T, T); H^{1/2}({\mathbb{S}}^{n-1}))|_{\{ t > z \}}}^{\mu},
\]
for all $\eta\in C^{1,1}(\mathbb{R}^n)$ satisfying \eqref{outside:ball}--\eqref{outside:ball_z} and  $\norm{\eta-1}_{H^{s_0}(\R^n)}\leq \varrho$, $\norm{\eta-1}_{H^{s_1}(\R^n)} \leq M$.
\end{Theorem}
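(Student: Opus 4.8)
The plan is to linearize the measurement map at the constant sound speed $\eta\equiv 1$, to use the stability of the resulting linearized problem (which is handled by the modified time-reversal method), and then to pass to the nonlinear statement by an absorption argument followed by interpolation. Write $f=\eta-1$ and split the field as $\tilde{\mathcal U}=\delta(t-z)+u$, where the incident plane wave $\delta(t-z)$ already solves the free wave equation $(\p_t^2-\Delta)\delta(t-z)=0$ and the scattered part $u$ vanishes for $t<z$. A direct substitution into \eqref{eq:1-LS2020} gives $(\eta\,\p_t^2-\Delta)u=-f\,\delta''(t-z)$, so the leading contribution $u_1$ in $f$ obeys the constant-coefficient equation $(\p_t^2-\Delta)u_1=-f\,\delta''(t-z)$ with $u_1\equiv 0$ for $t$ negative. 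The linearization $L:=D\tilde{\mathcal A}(0)$ is then $f\mapsto u_1|_{\Sigma_T\cap\{t>z\}}$, and I decompose
\[
\tilde{\mathcal A}(f)-\tilde{\mathcal A}(0)=Lf+Q(f),
\]
where $Q(f)$ is the genuinely nonlinear remainder.

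The crucial input is the stability of $L$. The source $-f\,\delta''(t-z)$ is concentrated on the moving hyperplane $\{t=z\}$; analyzing its progressive wave expansion identifies the leading singularity of $u_1$ with the initial data of a thermoacoustic-type wave, so that inverting $L$ reduces, up to smoothing errors, to recovering $f$ from the boundary values of a wave generated by $f$, exactly as in the acoustic problem of photoacoustic imaging. Since $\eta$ is close to $1$, the sound speed is non-trapping and, for $T=T(n)$ large enough, the visibility (observability) condition on $\Sigma_T$ holds; applying the adapted modified time-reversal scheme therefore yields a Lipschitz linearized estimate, which for concreteness I write as
\[
\norm{f}_{L^2(\R^n)}\leq C\,\norm{Lf}_{\mathrm{meas}},\qquad \norm{\cdot}_{\mathrm{meas}}:=\norm{\cdot}_{H^{-5}((-T,T);H^{1/2}(\mathbb{S}^{n-1}))|_{\{t>z\}}}.
\]

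Next I control the remainder. Using the well-posedness of \eqref{eq:1-LS2020} from Proposition~\ref{prop:well_posedness_z_1} together with energy estimates for the variable-coefficient wave equation with $\eta$ close to $1$, the difference $u-u_1$ solves a wave equation whose source is quadratic in $f$ (schematically, $f$ times derivatives of $u_1$). Because $s_0\gg n/2+2$, the norm $\norm{f}_{H^{s_0}(\R^n)}$ controls $f$ in $C^2$ and makes the relevant products tame, so I expect a bound of the form $\norm{Q(f)}_{\mathrm{meas}}\leq C\,\norm{f}_{H^{s_0}(\R^n)}\norm{f}_{L^2(\R^n)}$. Combining this with the linearized estimate and the triangle inequality,
\[
\norm{f}_{L^2(\R^n)}\leq C\,\norm{\tilde{\mathcal A}(f)-\tilde{\mathcal A}(0)}_{\mathrm{meas}}+C\,\norm{f}_{H^{s_0}(\R^n)}\norm{f}_{L^2(\R^n)},
\]
and since $\norm{f}_{H^{s_0}(\R^n)}\leq\varrho$, choosing $\varrho$ small enough absorbs the last term into the left-hand side to give $\norm{f}_{L^2(\R^n)}\leq C\,\norm{\tilde{\mathcal A}(f)-\tilde{\mathcal A}(0)}_{\mathrm{meas}}$.

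Finally, the H\"older exponent arises from interpolation against the a priori bound. With $\mu=(s_1-s_0)/s_1\in(0,1)$, the interpolation inequality $\norm{f}_{H^{s_0}(\R^n)}\leq\norm{f}_{L^2(\R^n)}^{\mu}\,\norm{f}_{H^{s_1}(\R^n)}^{1-\mu}$ together with $\norm{f}_{H^{s_1}(\R^n)}\leq M$ yields $\norm{f}_{H^{s_0}(\R^n)}\leq C M^{1-\mu}\,\norm{f}_{L^2(\R^n)}^{\mu}$, and inserting the absorbed linear estimate gives the claimed bound (with $s_1>s_0$ chosen at the start so that $\mu$ has the desired value). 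The main obstacle will be to carry out the two derivative-counting steps compatibly: establishing the linearized $L^2$-stability in the sharp measurement norm through the time-reversal construction, including the observability needed for the photoacoustic reduction, and proving the tame remainder estimate with one factor in the small norm $H^{s_0}$ and the other in the weak norm $L^2$, so that the derivative loss in $Q(f)$ is exactly compensated and the absorption closes for $\varrho$ small.
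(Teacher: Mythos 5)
Your overall architecture (linearize at $\eta\equiv 1$, invert the linearization by the time-reversal method, control the remainder, interpolate against the a priori $H^{s_1}$ bound) matches the paper's strategy, but two of your key steps contain genuine gaps, and they are exactly the points where the paper is forced to route everything through the abstract scheme of Proposition \ref{prop:StUh-LS2020}. First, your ``crucial input'' $\norm{f}_{L^2}\leq C\norm{Lf}_{H^{-5}((-T,T);H^{1/2}(\mathbb{S}^{n-1}))}$ is a \emph{Lipschitz} estimate in the weak measurement norm, and it is false: the wave evolution has no smoothing, so for $f_\lambda(x)=e^{i\lambda x\cdot\xi}\chi(x)$ with $\norm{f_\lambda}_{L^2}\sim 1$ the trace $Lf_\lambda$ oscillates at frequency $\sim\lambda$ in time, and the $H^{-5}$ time weight suppresses it by $\lambda^{-5}$ while the $H^{1/2}(\mathbb{S}^{n-1})$ norm gains at most $\lambda^{1/2}$, so $\norm{Lf_\lambda}_{\mathrm{meas}}\to 0$. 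What the time-reversal method actually yields (Proposition \ref{prop:fA1f-LS2020}) is stability with data in the \emph{strong} norm $H^1(\Sigma)$; passing to the weak norm necessarily costs a H\"older exponent, obtained by interpolating $H^{3/2}$ between $H^{-3}$ and $H^{s_2}$ on the lateral boundary together with the a priori forward bound \eqref{cond:third_z:z} --- i.e.\ conditions \eqref{cond:first_z} and \eqref{cond:third_z:z} with $\mu_1\mu_2>1/2$, a mechanism your scheme bypasses entirely.

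Second, your absorption step rests on the tame bilinear remainder bound $\norm{Q(f)}_{\mathrm{meas}}\lesssim\norm{f}_{H^{s_0}}\norm{f}_{L^2}$, which you only say you ``expect.'' The paper proves (Lemma \ref{lemma_a_coneone}) the weaker quadratic bound $\lesssim\norm{f}_{L^\infty}^2\lesssim\norm{f}_{H^{s_0}}^2$, and with only that bound your absorption does not close: the triangle inequality gives $\norm{f}_{L^2}\leq C\norm{\tilde{\mathcal A}(f)-\tilde{\mathcal A}(0)}_{\mathrm{meas}}+C\norm{f}_{H^{s_0}}^2$, and after interpolation $\norm{f}_{H^{s_0}}^2\lesssim \varrho\, M^{1-\mu}\norm{f}_{L^2}^{\mu}$ with $\mu<1$, which cannot be absorbed into $\norm{f}_{L^2}$ on the left. (There is also a norm mismatch: your linear estimate must live in $H^1(\Sigma)$ as noted above, while your remainder estimate lives in the weak measurement norm, so the two cannot be combined by a triangle inequality at all.) The Stefanov--Uhlmann scheme resolves precisely this: combining \eqref{cond:third_z} with both interpolations produces a self-improving inequality in which the error term appears with the power $2\mu_1\mu_2>1$ of the small quantity, and \emph{that} is why absorption succeeds for $\varrho$ small. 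To repair your proof you would either have to prove the bilinear remainder estimate in the strong trace norm (harder than anything in the paper) or simply adopt the interpolation bookkeeping of conditions \eqref{cond:first_z}--\eqref{cond:third_z:z}, at which point you recover the paper's argument.
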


Theorem \ref{th:main_result_z} is an instance of a fixed angle inverse scattering result, where one determines a sound speed from scattering measurements corresponding to a single incident plane wave.
There are several results of this type for determining a time-independent potential $q$ instead of a sound speed. The equation in this case is 
\begin{equation} \label{initial_eq_z_z_z_1}
(\partial_t^2 -\Delta + q(x)) \mathcal{W}=0  \; \; \text{in}\; \R^{n+1}, \qquad \mathcal{W}|_{\left\{t \ll 0 \right\}} = \delta (t-x\cdot \theta).
\end{equation}
One can alternatively work on the frequency side with the Schr\"odinger equation
\begin{equation*}
	\left\{\begin{aligned}
	& (-\Delta - k^2 + q(x)) u = 0  \ \text{in} \ \Rn, \\
	& u \text{~is outgoing}.
	\end{aligned}\right.
\end{equation*}
The fixed angle scattering problem consists of determining $q$ from the knowledge of $\mathcal{W}|_{\Sigma_T}$, or equivalently from the scattering amplitude $a_q(k, \theta, \omega)$ for all $k > 0$ and $\omega \in {\mathbb{S}}^{n-1}$. This equivalence is discussed in detail in \cite{RakeshSalo2}.

\smallskip

There are several known results related to recovering small or generic potentials and singularities from fixed angle measurements \cites{BarceloEtAl, BaylissLiMorawetz, Meronno_thesis, Ruiz, Stefanov_generic}.
In the recent works \cites{RakeshSalo2, RakeshSalo1} it was shown that a potential $q \in C^{\infty}_c(\R^n)$ is uniquely determined by measurements corresponding to two incident plane waves from opposite directions $\theta = \pm \theta_0$, or just a single incident plane wave if $q$ satisfies some symmetry conditions.
This result was extended in \cite{MaSa20} to the case when $-\Delta$ is replaced by the Laplace-Beltrami operator $-\Delta_g$ in \eqref{initial_eq_z_z_z_1}, with $g$ being a known metric satisfying certain symmetry conditions.
Using similar ideas, in \cite{MePoSa20} the authors proved analogous results in the case of time-independent first order coefficients. 
We also mention the recent work \cite{KR20}, which studies fixed angle scattering for time-dependent coefficients also in the case of first-order perturbations.
However, the problem of determining a general potential $q \in C^{\infty}_c(\R^n)$ from fixed angle measurements remains open and so does the corresponding inverse backscattering problem (see \cite{RakeshUhlmann} for more information).

\smallskip

The purpose of the present article is to study the fixed angle problem for determining a sound speed $\eta$ instead of a potential $q$. The method in \cites{RakeshSalo2, RakeshSalo1}, which is based on Carleman estimates and reflection arguments, requires symmetry and appears to break down for most nonconstant sound speeds.
In this work we approach the problem for sound speeds by studying the linearized problem.
The main observation is that the linearization of the fixed angle inverse problem for a sound speed has similar features as the acoustic problem in thermo/photoacoustic tomography \cite{SU13multi}.
We then adapt the modified time-reversal method introduced in \cite{SU09} to our case and establish uniqueness, stability and reconstruction for the linearized problem at a constant sound speed.
This can be used to prove local uniqueness and stability for the nonlinear inverse problem as in \cite{SU12}, leading to Theorem \ref{th:main_result_z}. 

\begin{Remark}
It is possible that the methods in this work can be extended to deal with the linearized problem at a general sound speed, and to obtain a counterpart of Theorem \ref{th:main_result_z} showing that $\norm{\eta_1-\eta_2} \leq C \norm{\tilde{\mathcal{A}}(\eta_1-1) - \tilde{\mathcal{A}}(\eta_2-1)}^{\mu}$ in suitable norms when both $\eta_1$ and $\eta_2$ are close to some fixed nonconstant sound speed. These questions are more involved and will be left to a future work.
\end{Remark}

We now describe our method in more detail. As mentioned above, the proof of Theorem \ref{th:main_result_z} reduces to studying the injectivity and stability properties of the linearization of the map $\widetilde{\mathcal{A}}$ at a constant sound speed and to using a general result in \cite{SU12}. Due to technical reasons, we consider a smoother initial value than the one in \eqref{eq:1-LS2020} and study instead the equation 
\begin{equation} \label{eq:2-LS2020}
    (\eta(x) \, \partial_t^2 -\Delta) {\mathcal{U}}=0  \; \; \text{in}\; \R^{n+1}, \qquad {\mathcal{U}}|_{\left\{t<-1\right\}} = H_1 (t-z),
\end{equation}
where $H_1(s)= s$ when $s\geq 0$ and zero otherwise. We now consider the fixed angle inverse scattering problem associated with \eqref{eq:2-LS2020}, where the measurement operator is given by 
\begin{equation*} 
	\mathcal A(\eta-1)= \mathcal U|_{\Sigma_T \cap \{ t > z \}}.
\end{equation*}
Since $\eta$ is time-independent, one has $\tilde{\mathcal{U}}=\partial_t^2 \s {\mathcal{U}}$ and thus the fixed angle inverse scattering problems for $\mathcal{A}$ and $\tilde{\mathcal{A}}$ are equivalent.

The following result describes the precise function spaces involved (its proof is presented in Appendix \ref{appendix:stationary}).
We mention that the space $H^{-k}((-T,T), H^{\alpha}(\R^{n}))$ below is defined in a standard way as the dual space of $H^k_0((-T,T), H^{-\alpha}(\R^{n}))$.
The duality is with respect to the inner product in $L^2((-T,T), L^{2}(\R^{n}))$.
We say that $f\in H^{-k}((-T,T), H^{\alpha}_{loc}(\R^{n}))$ if $\chi\, f \in H^{-k}((-T,T), H^{\alpha}(\R^{n}))$ for all $\chi\in C^\infty_0(\mathbb{R}^n)$.
See Appendix \ref{appendix:stationary} for more details.

\begin{Proposition}\label{prop:well_posedness_z_1}
	Let $n\geq 2$, $T>1$, $\sigma\in (0,1)$ and $M> 1$. Fix some $s_0>n/2+2$.
	Let $\eta\in C^{1,1}(\mathbb{R}^n)$ with $\eta-1 \in H^{s_0}(\mathbb{R}^n)$ be a function satisfying \eqref{outside:ball}-\eqref{outside:ball_z} and
	\[
	\norm{\eta-1}_{H^{s_0}(\R^n)} \leq M, \quad  M^{-1}\leq \eta(x)\leq M  \quad \text{a.e in} \ \    \R^n.
	\]
	There exists a unique distributional solution ${\mathcal{U}}\in H^{-1}((-T, T); H^1_{loc}(\mathbb{R}^n))$ to \eqref{eq:2-LS2020}. Moreover, for any $\chi\in C^\infty_0(\mathbb{R}^n)$ one has
	\begin{equation}\label{estimate_local_z}
	\norm{\chi\, {\mathcal{U}}}_{H^{-1}((-T, T); H^1(\mathbb{R}^n))} \leq C(n, T, M, \norm{\chi}_{C^1}).
	\end{equation}
	Moreover, $\tilde{\mathcal{U}}=\partial_t^2\s\s {\mathcal{U}} \in H^{-3}((-T, T); H^1_{loc}(\mathbb{R}^n))$ is the unique solution to \eqref{eq:1-LS2020}. In particular, for $\chi\in C^\infty_0(\mathbb{R}^n)$ with $\chi(x)=1$ when $|x|< 2$ and $\chi(x)=0$ when $|x|>3$, we have
	\begin{equation}\label{estimate_local_z_z}
	\begin{aligned}
	    	\norm{\tilde{\mathcal{U}}|_{\Sigma_T}}_{H^{-3}((-T, T); H^{1/2}({\mathbb{S}}^{n-1}))} & \lesssim \norm{\chi\,\tilde{\mathcal{U}}}_{H^{-3}((-T, T); H^1(\mathbb{R}^n))} \\
	    	&\lesssim \norm{\chi\,\mathcal{U}}_{H^{-1}((-T, T); H^1(\mathbb{R}^n))}\leq C(n, T, M, \norm{\chi}_{C^1}).
	\end{aligned}
	\end{equation}
\end{Proposition}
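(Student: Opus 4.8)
The plan is to reduce \eqref{eq:2-LS2020} to a standard wave equation with zero Cauchy data and a source of controlled regularity, to solve that by the energy method, and then to recover $\mathcal{U}$ and $\tilde{\mathcal{U}}$ by differentiating in time, exploiting that $\eta$ does not depend on $t$. A direct computation shows that $H_1(t-z)$ solves the free wave equation, and since $\partial_t^2 H_1(t-z) = \Delta H_1(t-z) = \delta(t-z)$ one gets $(\eta\partial_t^2-\Delta)H_1(t-z) = (\eta-1)\delta(t-z)$; the corresponding source for the scattered field is therefore too singular to treat directly. First I would integrate in time: set $H_3(s)=s^3/6$ for $s\geq 0$ and $0$ otherwise, so that $H_3''=H_1$ and $H_3^{(4)}=\delta$, and let $v$ solve \eqref{eq:2-LS2020} with the smoother data $H_3(t-z)$ in place of $H_1(t-z)$. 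Writing $v=H_3(t-z)+w$, the scattered part $w$ solves
\[
(\eta\partial_t^2-\Delta)w = -(\eta-1)H_1(t-z)=:f, \qquad w|_{\{t<-1\}}=0,
\]
where $f$ is continuous in $t$ and compactly supported in $x$ by \eqref{outside:ball}, hence $f\in L^2((-T,T);L^2(\mathbb{R}^n))$.

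\textbf{Energy estimates.} This is the analytic core. Since $\eta\in C^{1,1}$ with $M^{-1}\leq\eta\leq M$ by \eqref{outside:ball_z}, the equation for $w$ is hyperbolic with Lipschitz principal part, and the classical energy method (multiply by $\partial_t w$, integrate, use the $t$-independence of $\eta$) gives a unique finite-energy solution $w\in C([-T,T];H^1)\cap C^1([-T,T];L^2)$ with a bound of the form $\|w\|_{C_tH^1}+\|\partial_t w\|_{C_tL^2}\lesssim\|f\|_{L^1_tL^2_x}$, the constant depending only on $n,T,M$; finite speed of propagation (using $\eta\geq M^{-1}$) confines $w$ to a bounded set for $t\in(-T,T)$. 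To upgrade the time regularity I would differentiate the equation once in $t$: since $\eta$ is time-independent, $\partial_t w$ solves the same equation with source $\partial_t f=-(\eta-1)H(t-z)\in L^2_tL^2_x$ and zero data, so the estimate applies again and yields $\partial_t w\in C([-T,T];H^1)$. Consequently $w\in H^1((-T,T);H^1_{loc})$, and the same holds for the smooth term $H_3(t-z)$, so $v\in H^1((-T,T);H^1_{loc})$ with a bound controlled by $n,T,M$.

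\textbf{Assembling the solution and the traces.} Because $\eta$ is independent of $t$, the operator $\eta\partial_t^2-\Delta$ commutes with $\partial_t$, so $\mathcal{U}:=\partial_t^2 v$ solves \eqref{eq:2-LS2020} with data $\partial_t^2 H_3(t-z)=H_1(t-z)$. Since $\partial_t^2$ acts only in time and maps $H^1((-T,T);H^1_{loc})\to H^{-1}((-T,T);H^1_{loc})$ while preserving the spatial regularity $H^1$, we obtain $\mathcal{U}\in H^{-1}((-T,T);H^1_{loc})$ and \eqref{estimate_local_z} follows from the bound for $v$. Applying $\partial_t^2$ once more gives $\tilde{\mathcal{U}}=\partial_t^2\mathcal{U}=\partial_t^4 v\in H^{-3}((-T,T);H^1_{loc})$, which solves \eqref{eq:1-LS2020} because $\partial_t^4 H_3(t-z)=\delta(t-z)$. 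In \eqref{estimate_local_z_z}, the first inequality is the spatial trace theorem $H^1(\mathbb{R}^n)\to H^{1/2}(\mathbb{S}^{n-1})$ applied in the time variable by duality, the middle inequality is the boundedness of $\partial_t^2\colon H^{-1}_t\to H^{-3}_t$ together with the fact that $\chi$ is $t$-independent, and the last is \eqref{estimate_local_z}.

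\textbf{Uniqueness and main obstacle.} Given two solutions in $H^{-1}((-T,T);H^1_{loc})$, their difference $D$ solves the homogeneous equation and vanishes for $t<-1$. Integrating $D$ sufficiently many times in $t$ (legitimate since $D$ is supported in $\{t\geq-1\}$, raises the time regularity, preserves $H^1_{loc}$ in space, and commutes with $\eta\partial_t^2-\Delta$) produces a classical finite-energy solution $E$ of the homogeneous equation with zero Cauchy data; energy conservation for the time-independent $C^{1,1}$ coefficient then forces $E\equiv 0$, whence $D\equiv 0$. The principal difficulty lies in the analytic core: establishing well-posedness and sharp energy estimates for the variable-coefficient wave equation when $\eta$ is only $C^{1,1}$ and the data and source live in negative-order-in-time Sobolev spaces, and making rigorous both the commutation of $\partial_t$ with the solution operator used in the regularity boost and the time-integration used for uniqueness. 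Tracking the dependence of the constants on $n,T,M$ and $\|\chi\|_{C^1}$, and correctly interpreting the vector-valued negative-order trace in \eqref{estimate_local_z_z}, are the remaining points that require care.
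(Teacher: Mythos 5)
Your proposal is correct in substance, but it takes a genuinely different route from the paper's. The paper keeps the singular incident wave: writing $\mathcal{U} = H_1(t-z) + \mathcal{V}$ and using $(\eta\,\partial_t^2-\Delta)H_1(t-z) = (\eta-1)\,\delta(t-z)$, it reduces to a Cauchy problem for $\mathcal{V}$ with a delta-in-time source lying in $H^{-1}((-T,T);L^2(\R^n))$, and then invokes the negative-index well-posedness theory of the Appendix (Lemma \ref{eq:forward_problem_z}), whose proof rests on the energy estimate of Lemma \ref{lemma_wp_first} plus a Hahn--Banach duality argument, and which requires verifying the Kato--Ponce-type multiplier and commutator hypothesis \eqref{sobolev_multiplier} via Lemma \ref{lemma_sobolev_multiplier}, as well as the $C^{1,1}$ regularity of $\eta$ (through Smith's regularity theorem) for the uniqueness part. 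You instead smooth the incident wave twice in time ($H_3$ in place of $H_1$), which turns the source for the scattered part into $-(\eta-1)H_1(t-z)\in L^2((-T,T);L^2(\R^n))$, solve by the classical positive-regularity energy method --- note that in the form $\eta\,\partial_t^2-\Delta$ with time-independent $\eta$ the basic energy identity needs no commutators at all, only $M^{-1}\leq\eta\leq M$ --- bootstrap one $t$-derivative, and recover $\mathcal{U}=\partial_t^2 v$ and $\tilde{\mathcal{U}}=\partial_t^4 v$ using that $\partial_t$ commutes with the operator; this is precisely the mechanism behind the paper's Lemma \ref{lemma_u_dttu}, pushed one level further down. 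Your route is more elementary and in fact uses less than the stated hypotheses for this Proposition (the full strength of $s_0>n/2+2$ and \eqref{sobolev_multiplier} never enters your existence argument); what the paper's heavier machinery buys is reusability, since Lemma \ref{eq:forward_problem_z} with $H^{-1}$ and $H^{-3}$ sources is needed again in the proof of Lemma \ref{lemma_a_coneone}, so the Proposition there comes as a corollary of infrastructure the paper must build anyway. Two points in your sketch deserve tightening: time-integration raises only the \emph{time} regularity, so your $E$ lies in $H^1_t H^1_{x,\mathrm{loc}}$ rather than being a classical solution --- this class suffices for the Lions-type uniqueness argument, but ``classical'' overstates it; and since solutions are only $H^1_{\mathrm{loc}}$ in space, you cannot integrate the energy identity over all of $\R^n$, so uniqueness must run through local energy estimates on backward cones (domain of dependence), which is standard for Lipschitz $\eta$ but should be stated explicitly.
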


Since our approach involves a linearization argument, 
we shall first study the Fr\'echet derivative of $\mathcal{A}$ at the constant $0$, denoted by $A_0$. It is proved in Section \ref{proof_main:result_z} that $A_0$ is given by
\begin{equation} \label{eq:A0-LS2020}
    A_0(f) = U|_{\Sigma_T \cap \{ t > z \}},
\end{equation}
where $U$ solves
 \begin{equation}  \label{eq:s1-LS2020_z}
(\partial_t^2 -\Delta)U = -f(x) \delta (t- z)   \; \; \text{in}\; \R^{n+1}, \qquad U|_{\left\{t<-1\right\}} = 0.
\end{equation}
By employing the progressive wave expansion method, one can further show that any solution to \eqref{eq:s1-LS2020_z} can be written as
\[
U(t,x)= u(t,x)H(t-z),
\]
where $H$ stands for the Heaviside function and $u$ is a $C^2$ function in the set $\{t\geq z \}$ solving the equation 
\begin{equation} \label{eq:utzf-LS2020}
    \left\{ \begin{aligned}
    (\partial_t^2 -\Delta) u & = 0 & \text{in}\; \left\{ t> z\right\}, \\ 
    (\partial_t + \partial_z) u & = -\frac{1}{2} f &\text{on } \left\{ t = z\right\}.
    \end{aligned}\right.
\end{equation}
Thus the linearized inverse problem associated with $A_0$ amounts to determining the initial value $f(x)$ from the knowledge of $u|_{\Sigma_T \cap \{ t > z \}}$.

It is worth mentioning that the linearized problem above is similar to the acoustic problem in thermo/photoacoustic tomography. There one needs to recover an initial condition $g(x)$ from the boundary measurement $v|_{\p \Omega \times (0,T)}$, where $v$ solves 
\begin{equation} \label{bm_1}
\left\{ \begin{aligned}
(\partial_t^2 -\Delta) v & = 0 & \text{in}\; \R^n \times (0,T), \\ 
v & = g & \text{on}\; \left\{ t=0 \right\}, \\
\p_t v &= 0 & \text{on}\; \left\{ t=0 \right\}.
\end{aligned}\right.
\end{equation}
This problem was studied in detail in \cite{SU09}, also for general sound speeds, by using a modification of the time-reversal method.
Error estimates and reconstruction formulae in case of constant sound speeds, and numerical implementations in the case of non-trapping sound speeds, were obtained earlier in \cite{HKN08} and \cite{Hri09} by using the time-reversal method.
For more details on thermo/photoacoustic tomography inverse problems we refer the readers to \cites{KK08,KK11,SU13multi,AKK17}
and the references therein.
We will adopt the modified time-reversal method to show uniqueness, stability and reconstruction for the inverse problem associated with \eqref{eq:utzf-LS2020}. The main difference with previous results is that the initial data is given on the characteristic set $\{ t = z \}$ instead of the standard set $\{ t = 0 \}$. This creates various difficulties, and in order to overcome these we employ energy estimates in space-time domains that are adapted to this characteristic geometry.

\smallskip

We point out that related inverse problems associated with equations similar to \eqref{bm_1} have been studied in different settings. 
Several authors have studied the identification of unknown sound speeds and first and zero-order potentials associated with the wave equation in different cases, including the knowledge of the solutions restricted to $\{ t = 0 \}$ and to open subsets of $\partial \Omega \times (0,T)$ or $\Omega \times (0,T)$.
In these cases, reconstructive algorithms were proposed in \cites{KM91,Aco15,AM15,BFO20}.
The proofs are based on Carleman estimates, numerical analysis (finite element method with discrete Carleman estimates), time reversal methods for constructing pseudo-inverse operators through Neumann series (as in the present work), and observability inequalities from Control Theory combined with the so-called Geometric Control Condition for wave-like equations.

 \smallskip
 
Returning to our case, the following result is the precise statement of uniqueness and stability in the linearized inverse problem with respect to suitable norms, see \eqref{eq:A0-LS2020}.
We refer to Proposition \ref{prop:RF-LS2020} for a reconstruction formula involving a Neumann series.

\begin{Proposition} \label{prop:fA1f-LS2020}
    Let $s_0\gg n/2+2$, $M>1$ and $T>1$. There exists $C>0$ so that
    \[
    \norm{f}_{L^2(\Omega)}\leq C (\norm{A_0 f}_{H^1(\Sigma_T \cap \{ t > z \})} + \frac {T^{1/2}} {\sigma^{1/4}} \nrm[H^1(\Sigma_T \cap \Gamma)]{A_0 f} ),
    \]
    for $f \in {H}^{s_0}_{\ol{\Omega}}(\mathbb{R}^n)$ with $\supp f \in \{ x \in \Rn \,;\, |x| \leq 1 - \sigma \}$.
\end{Proposition}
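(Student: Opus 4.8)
The plan is to recover $f$ from its imprint on the characteristic surface $\{t=z\}$ and to transport the control from there to the lateral boundary $\Sigma_T$ by an energy identity adapted to this characteristic geometry. The starting point is the jump relation $(\partial_t+\partial_z)u=-\tfrac12 f$ on $\{t=z\}$, which shows that $\|f\|_{L^2(\Omega)}^2$ is, up to a constant, the energy flux of $u$ across the characteristic surface. Indeed, writing the energy current $P^0=\tfrac12(|\partial_t u|^2+|\nabla u|^2)$, $P^{x_j}=-\partial_t u\,\partial_{x_j}u$ and computing its flux through $\{t=z\}$, whose conormal is proportional to $(1,0,\dots,0,-1)$, one finds that the flux density equals $\tfrac12|(\partial_t+\partial_z)u|^2+\tfrac12|\nabla_y u|^2$, a nonnegative quantity in which the transverse derivative has disappeared. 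Thus the flux through $\{t=z\}$ controls $\tfrac18|f|^2$ from below, and no degenerate derivative of $u$ on the characteristic surface enters.

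First I would fix the space-time region $R=(\Omega\times(-T,T))\cap\{t>z\}$, whose boundary consists of the characteristic face $\{t=z\}\cap\Omega$, the lateral piece $\Sigma_T\cap\{t>z\}$, and the top $\{t=T\}\cap\Omega$; the bottom $\{t=-T\}$ lies entirely in $\{t<z\}$ and does not contribute. Applying the divergence theorem to $(P^0,P^{x})$ over $R$ and using $(\partial_t^2-\Delta)u=0$ yields the identity
\[
\int_{\{t=z\}\cap\Omega}\Big(\tfrac18|f|^2+\tfrac12|\nabla_y u|^2\Big)
= E_T + \int_{\Sigma_T\cap\{t>z\}}(-\partial_t u\,\partial_\nu u),
\]
where $E_T=\int_{\{t=T\}\cap\Omega}P^0\geq 0$ is the energy remaining at the final time and $\partial_\nu u$ is the outward normal derivative on $\partial\Omega$. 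This already bounds $\|f\|_{L^2(\Omega)}^2$ by the lateral flux plus $E_T$; the two remaining tasks are to express the lateral flux through the measured Dirichlet data $u|_{\Sigma_T}=A_0 f$ and to control the \emph{unmeasured} final-time energy $E_T$.

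The tool for both tasks is the modified time-reversal operator of \cite{SU09}, adapted to the characteristic initial surface. Concretely, I would solve the backward wave equation in $R$ with lateral data $A_0 f$ and with the harmonic extension of $A_0 f|_{t=T}$ prescribed at $t=T$; this choice of final datum is precisely what makes the reconstruction error an operator $K$ whose norm is governed by the fraction of energy that has not yet escaped through $\partial\Omega$. Using this time-reversed solution as a multiplier in the energy identity converts the lateral flux into the $H^1(\Sigma_T\cap\{t>z\})$-norm of $A_0 f$, which produces the first term of the estimate, while the leftover energy $E_T$ is reconstructed from the boundary data up to the contraction $K$ (this is the content of the Neumann series in Proposition \ref{prop:RF-LS2020}). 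Since the sound speed is constant, the domain is bounded and $T>1$, the geometry is non-trapping and $K$ is a contraction once one accounts for the $\sigma$-buffer between $\supp f\subset\{|x|\leq 1-\sigma\}$ and $\partial\Omega$: by finite speed of propagation the signal from the initial surface reaches $\partial\Omega$ only in the region where $\{t=z\}$ meets $\Sigma_T$, and estimating the corresponding contribution to $E_T$ there by a trace inequality on a boundary strip of width $\sim\sqrt\sigma$ is what yields the correction term carrying the factor $T^{1/2}\sigma^{-1/4}$ and restricted to $\Sigma_T\cap\Gamma$.

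The main obstacle is exactly the characteristic nature of the initial surface $\{t=z\}$: the standard photoacoustic time-reversal reconstructs Cauchy data posed on a non-characteristic slice $\{t=0\}$, whereas here the datum $f$ is carried by the tangential derivative $(\partial_t+\partial_z)u$ along the light cone, so neither the energy space nor the back-propagation can be borrowed off the shelf. Overcoming this requires running all energy estimates on domains cut out by the characteristic surface, so that the degenerate transverse derivative never needs to be controlled, and in particular handling the grazing region where $\{t=z\}$ is tangent to $\Sigma_T$; this is the source of the $\sigma$-dependent loss and of the need to separate the two boundary contributions appearing in the statement.
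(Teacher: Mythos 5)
Your architecture coincides with the paper's: the characteristic-flux identity you write down is exactly the paper's \eqref{eq:0g0-LS2020} (used in Lemma \ref{lem:EuT0-LS2020} to relate $E(T,u)$ to the data on $\{t=z\}$), followed by the \cite{SU09}-style time reversal with the harmonic extension at $t=T$, a contraction estimate for the error operator, and a Neumann series. However, two of your steps fail as stated. The first is the lateral flux: in your identity the term $\int_{\Sigma}\overline{u}_t\, u_\nu$ contains the Neumann derivative, which is not measured, and ``using the time-reversed solution as a multiplier'' does not remove it --- any multiplier paired with $\square u = 0$ still produces boundary terms on $\Sigma$ coupling Dirichlet and Neumann traces. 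The paper proceeds differently: it applies the energy identity to the difference $w_\epsilon = u - v_\epsilon$, whose Dirichlet trace on $\Sigma$ tends to zero, so the lateral term is $O(\nrm[H^1(\Sigma)]{w_\epsilon}\, \nrm[L^2(\Sigma)]{\p_\nu w_\epsilon})$, and it controls $\nrm[L^2(\Sigma)]{\p_\nu w_\epsilon}$ by a separate hidden-regularity estimate coming from the radial multiplier $x\cdot\nabla w$ (Lemma \ref{lem:tdlv-LS2020}, after \cite[Lemma 3.3]{RakeshSalo1}). This multiplier lemma is the ingredient your sketch is missing; without it the unmeasured flux cannot be absorbed into $\nrm[H^1(\Sigma)]{A_0 f}$. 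Relatedly, since no $H^1$ trace theory is available on the characteristic plane, all these identities are justified only through the smooth approximations $u_\epsilon$ with the compatibility conditions \eqref{eq:DPue-LS2020} and the regularity result of Lemma \ref{lem:LLT210-LS2020}; you take $v|_{\Gamma}\in H^1(\Gamma)$ for granted.

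The second problem is the contraction mechanism, which you misattribute. The error operator $K = I - BA_0'$ is \emph{not} a contraction on the initial class: on $\mathcal H_{\mathrm{init}}$ one only has $\nrm[\mathcal H]{Kh}\leq \nrm[\mathcal H]{h} + T^{1/2}\nrm[H_w^1(\partial\Gamma)]{h}$ (Lemma \ref{lem:Kb-LS2020}), because $F$ does not vanish on $\partial\Gamma$. The series must therefore be organized as $(I+\sum_{j\geq 0}\tilde K^j K)B$, where only the restriction $\tilde K$ to the subspace $\mathcal H$ of functions vanishing on $\partial\Gamma$ is iterated, and the crucial bound $\nrm{\tilde K}<1$ does not come from the $\sigma$-buffer or finite speed of propagation, as you suggest: it comes from quantitative local energy decay for the free wave equation (\cite{ike2005var}) applied at a large intermediate time $T_0$ --- the paper even points out that the unique continuation argument of \cite{SU09} survives the limiting definition of $\tilde K$ only in the form $\nrm{\tilde K}\leq 1$, which is useless for the Neumann series. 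Finally, the factor $T^{1/2}\sigma^{-1/4}$ does not arise from a trace inequality on a strip of width $\sqrt{\sigma}$: since $\supp f \subset \{|x|\leq 1-\sigma\}$, the relevant traces on $\partial\Gamma$ are supported where the weight $(1-|y|^4)^{-1/2}$ in \eqref{eq:Weim-LS2020} is at most $\sigma^{-1/2}$, and taking the square root in the norm yields exactly $\sigma^{-1/4}$. Your heuristic points at the right grazing region, but the mechanism in the proof is this elementary weight bound combined with the $T\nrm[H_w^1(\partial\Gamma)]{h}^2$ term of Lemma \ref{lem:EuT0-LS2020}.
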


We outline the method for proving Proposition \ref{prop:fA1f-LS2020}.
Instead of $A_0$, it is convenient to work with a closely related operator $A_0': F \mapsto u|_{\Sigma}$ where $u$ solves
\begin{equation} \label{eq:vL01-LS2020}
    \left\{ \begin{aligned}
    (\partial_t^2 -\Delta) u & = 0 & \text{in}\; \left\{ t> z\right\}, \\ 
    u & = F &\text{on } \left\{ t = z\right\}.
    \end{aligned}\right.
\end{equation}
Recall $\Gamma = \Sigma_T \cap \{ t = z \}$.  Define a space
\[
\mathcal{H} := \{ F \in H^1(\{t=z\}) \,:\, \mathrm{supp}(F) \subset \ol{\Gamma} \}
\]
equipped with the $H^1$-norm.
In Section \ref{subsec:B-LS2020} we actually use a slightly different definition of $\mathcal H$, see \eqref{eq:zc-LS2020} and \eqref{eq:zc2-LS2020}, but here for illustration purposes we prefer to keep the definition simple.
One would like to think of $A_0'$ as a bounded operator $\mathcal{H} \to H^1(\Sigma)$.
In fact this holds in the standard case where the initial surface is $\{t=0\}$ instead of $\{t=z\}$ since the trace of $u$ is in $H^1(\Sigma)$
\cites{BaoSymes, FinchRakesh}.
We are not aware of such a result for our slanted case, so we will work with smooth functions instead and use norm estimates with uniform bounds in energy spaces.

After some natural derivations, uniqueness and stability for $A_0$ reduce to uniqueness and stability for $A_0'$. Now we use a time-reversal method as in \cite{SU09} and define an approximate inverse $B$ for $A_0'$ as the map $B: h \mapsto v|_{\Gamma}$, where $v$ solves the Dirichlet problem
\begin{equation} \label{eq:vL0-LS2020-intro}
\left\{\begin{aligned}
(\partial_t^2 -\Delta) v & = 0 && \text{in}\; Q \cap \{ t > z \}, \\ 
v & = h && \text{on } \Sigma, \\
v = \phi_0, \ v_t & = 0 && \text{in}\; Q \cap \{ t=T \}, \\
\Delta \phi_0 & = 0 && \text{~in~} \Omega, \quad \phi_0 = h(\cdot,T) \text{~on~} \partial \Omega,
\end{aligned}\right.
\end{equation}
with $Q := \Omega \times [-1,T]$. We prove that $B$ is a bounded operator $H^1(\Sigma) \to H^1(\Gamma)$.
Note that in odd dimensions, the sharp Huygens' principle implies that any solution $u$ of \eqref{eq:vL01-LS2020} with $F \in \mathcal{H}$ satisfies $u(\,\cdot\,,T)|_{\ol{\Omega}} = 0$ for $T$ large enough. 
Hence, in odd dimensions, letting $h = u|_{\Sigma}$ in \eqref{eq:vL0-LS2020-intro} implies $v = u$ and hence $B A_0' F = F$, so that $B$ is an exact inverse of $A_0'$.
In even dimensions this is no longer true. Instead, we will show that $B$ is almost an inverse of $A_0' $ in the following parametrix sense
\[
B A_0' F = F - \tilde{K} F
\]
where $\tilde{K}$ is a bounded operator on $\mathcal{H}$ with norm strictly less than $1$ when $T$ is large enough.
In \cite{SU09} this was done by using a unique continuation property for the wave equation,
but here in our case due to certain technical issue we use the local energy decay instead. This argument allows us to invert $A_0'$ by a Neumann series and prove Proposition \ref{prop:fA1f-LS2020}.

\smallskip

For readers' convenience we summarize the definitions of the maps $\tilde{\mathcal A}$, $\mathcal A$, $A_0$ and $A_0'$. 
On one side, $\tilde{\mathcal A}$ maps $\eta-1$ to $\tilde{\mathcal{U}}|_{\Sigma_T \cap \{t >z\}}$ with $\delta(t-z)$ as the incident wave, see \eqref{eq:1-LS2020}.
On the other side, $\mathcal A$ maps $\eta-1$ to $\mathcal U|_{\Sigma_T \cap \{t >z\}}$ with $H_1 (t-z)$ as the incident wave, see \eqref{eq:2-LS2020}.
In the same way, $A_0$ is the Fr\'echet derivative of $\mathcal A$ at $0$, see \eqref{eq:A0-LS2020}, and finally $A_0'(-2 \int_{-1}^z f(y,s,s) \dif s) = A_0(f(y,z,z))$ (compare \eqref{eq:utzf-LS2020} with \eqref{eq:vL01-LS2020}).

We also summarize the notation for different sets which we use throughout the paper.
\begin{equation} \label{eq:nost-LS2020}
	\left\{\begin{aligned}
	    \Omega & := \{x\in \R^n \, : \,|x|<1 \}, \quad
	    Q := \Omega \times [-1,T], \quad \Sigma_T := \partial \Omega \times (-T, T), \\
	    \Sigma & := \Sigma_T \cap \{ t > z \}, \quad \Sigma_- := \Sigma_T \cap \{-1 \leq t < z \}, \\
	    \Gamma & = \Sigma_T \cap \{ t = z \}, \quad \Gamma_T := \{(y,z,T) \,;\, (y,z) \in \Omega \}, \\
	    \widetilde Q_\tau & := \{(y,z,t) \,;\, (y,z) \in \Omega,\, z \leq t \leq \tau \} \subset Q, \\
		\widetilde \Sigma_\tau & := \{(y,z,t) \,;\, (y,z) \in \partial \Omega,\, z \leq t \leq \tau \} \subset \Sigma, \\
		\widetilde \Gamma_\tau & := \{(y,z,\tau)\,;\, (y,z) \in \Omega \} \cap \widetilde Q_\tau.
	\end{aligned}\right.
\end{equation}

This paper is structured as follows.
Section \ref{proof_main:result_z} is dedicated to proving Theorem \ref{th:main_result_z} under the assumption that Proposition \ref{prop:fA1f-LS2020} related to linearized problem is known.
In Section \ref{sec:lima-LS2020} we study basic properties of the linear map $A_0$.
In Section \ref{sec:KK-LS2020} we prove Proposition \ref{prop:fA1f-LS2020} as a consequence of a reconstruction formula stated in Proposition \ref{prop:RF-LS2020}.
In the Appendix we give some results on the well-posedness of the forward problem for wave equations in negative Sobolev spaces that are required in our arguments.
The results are stated with finite regularity assumptions on the coefficients, and the dependence of the constants in norm estimates on different quantities is explicitly specified.

\subsection*{Acknowledgements}

S.~M., L.~P-M.~and M.~S.~were supported by the Academy of Finland (Finnish Centre of Excellence in Inverse Modelling and Imaging, grant numbers 312121 and 309963), and M.S.\ was also supported by the European Research Council under Horizon 2020 (ERC CoG 770924).

\section{The nonlinear map. Proof of Theorem \ref{th:main_result_z}} \label{proof_main:result_z}

In this section we will prove Theorem~\ref{th:main_result_z} by using Propositions~\ref{prop:well_posedness_z_1}, \ref{prop:fA1f-LS2020} and~\ref{prop:sol_smooth_1} as well as Lemma~\ref{lemma_u_dttu}. The proofs of these results will be given in later sections. We shall use the following abstract local uniqueness and stability result from~\cite[Theorem 2]{SU12}.

\begin{Proposition} \label{prop:StUh-LS2020}
	Let $\mathcal{X}_j$, $\mathcal{Y}_j$ with $j=1,2,3$ be Banach spaces with $\mathcal{X}_3 \subset \mathcal{X}_1 \subset \mathcal{X}_2$ and $\mathcal{Y}_3 \subset \mathcal{Y}_2 \subset \mathcal{Y}_1$, such that the following interpolation estimates hold:
	\begin{equation}\label{cond:first_z}
	\norm{f}_{\mathcal{X}_1} \lesssim \norm{f}_{\mathcal{X}_2}^{\mu_1}\norm{f}_{\mathcal{X}_3}^{1-\mu_1}, \quad \norm{g}_{\mathcal{Y}_2} \lesssim \norm{g}_{\mathcal{Y}_1}^{\mu_2}\norm{g}_{\mathcal{Y}_3}^{1-\mu_2}, \qquad \mu_1, \mu_2 \in (0,1], \; \; \mu_1 \mu_2 >1/2.
	\end{equation}
	Let $\mathcal{A}: \mathcal{V}_1 \to \mathcal{Y}_1$ be a nonlinear map where $\mathcal{V}_1 \subset \mathcal{X}_1$ is an open subset of $ \mathcal{X}_1$. Consider $f_0\in \mathcal{V}_1$ and assume that 
	\begin{equation}\label{cond:second_z}
	\mathcal{A}(f)= \mathcal{A}(f_0) + A_{f_0}(f-f_0) + R_{f_0}(f), \quad \norm{R_{f_0}(f)}_{\mathcal{Y}_1} \leq C(f_0) \norm{f-f_0}^2_{\mathcal{X}_1}
	\end{equation}
	holds for all $f$ in some neighbourhood of $f_0$ in $\mathcal{V}_1$. Here $A_{f_0}$ stands for the Fr\'echet derivative of $\mathcal{A}$ at $f_0$. In addition, suppose  that 
	\begin{equation}\label{cond:third_z}
	\norm{h}_{\mathcal{X}_2}\leq C \norm{A_{f_0} h}_{\mathcal{Y}_2}, \quad h\in \mathcal{X}_1
	\end{equation}
	and 
	\begin{equation}\label{cond:third_z:z}
	\norm{A_{f_0} h}_{\mathcal{Y}_3}\leq C \norm{h}_{\mathcal{X}_3}, \quad h\in \mathcal{X}_3.
	\end{equation}
	Then for any $L>0$ there exists $\epsilon>0$, so that for any $f$ with
	\begin{equation} \label{cond:f_bounds}
	\norm{f-f_0}_{\mathcal{X}_1} \leq \epsilon, \quad \norm{f}_{\mathcal{X}_3}\leq L,
	\end{equation}
	one has the conditional stability estimate
	\[
	\norm{f-f_0}_{_{\mathcal{X}_1}}\leq CL^{2-\mu_1-\mu_2} \norm{\mathcal{A}(f)-\mathcal{A}(f_0)}^{\mu_1\mu_2}_{\mathcal{Y}_1}.
	\]
	In particular, if $\mathcal{A}(f)=\mathcal{A}(f_0)$ for some $f$ satisfying \eqref{cond:f_bounds}, then $f=f_0$.
\end{Proposition}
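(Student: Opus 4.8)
The plan is to splice the two interpolation bounds \eqref{cond:first_z}, the linearized stability \eqref{cond:third_z}, the linearized boundedness \eqref{cond:third_z:z} and the quadratic remainder bound \eqref{cond:second_z} into a single self-improving inequality for $\norm{f-f_0}_{\mathcal{X}_1}$, and then to absorb the quadratic term by exploiting $\mu_1\mu_2>1/2$. Write $h := f-f_0$ and $E := \norm{\mathcal{A}(f)-\mathcal{A}(f_0)}_{\mathcal{Y}_1}$. I first record that $h\in\mathcal{X}_3$ with $\norm{h}_{\mathcal{X}_3}\lesssim L$: indeed $\norm{f}_{\mathcal{X}_3}\le L$ by \eqref{cond:f_bounds}, and (enlarging $L$ if necessary) $f_0\in\mathcal{X}_3$, which in the application is automatic since $f_0=0$. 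We may also assume $L\ge 1$, the small-$L$ regime following by monotonicity of the a priori constraint.

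First I would chain the estimates from $\mathcal{X}_1$ down to the data. The $\mathcal{X}$-interpolation in \eqref{cond:first_z} followed by \eqref{cond:third_z} gives
\[
\norm{h}_{\mathcal{X}_1} \lesssim \norm{h}_{\mathcal{X}_2}^{\mu_1}\norm{h}_{\mathcal{X}_3}^{1-\mu_1} \lesssim \norm{A_{f_0}h}_{\mathcal{Y}_2}^{\mu_1}\,L^{1-\mu_1}.
\]
The $\mathcal{Y}$-interpolation in \eqref{cond:first_z} together with the boundedness \eqref{cond:third_z:z} controls the middle factor by
\[
\norm{A_{f_0}h}_{\mathcal{Y}_2} \lesssim \norm{A_{f_0}h}_{\mathcal{Y}_1}^{\mu_2}\norm{A_{f_0}h}_{\mathcal{Y}_3}^{1-\mu_2} \lesssim \norm{A_{f_0}h}_{\mathcal{Y}_1}^{\mu_2}\,L^{1-\mu_2}.
\]
Finally the linearization identity $A_{f_0}h = \mathcal{A}(f)-\mathcal{A}(f_0)-R_{f_0}(f)$ and the remainder bound in \eqref{cond:second_z} give $\norm{A_{f_0}h}_{\mathcal{Y}_1} \le E + C(f_0)\norm{h}_{\mathcal{X}_1}^2$. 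Substituting the last two displays into the first and collecting the powers of $L$, which total $(1-\mu_1)+\mu_1(1-\mu_2)=1-\mu_1\mu_2\le 2-\mu_1-\mu_2$, I obtain (using $L\ge 1$) the key inequality
\[
\norm{h}_{\mathcal{X}_1} \le C\,L^{2-\mu_1-\mu_2}\bigl(E + C(f_0)\norm{h}_{\mathcal{X}_1}^2\bigr)^{\mu_1\mu_2}.
\]

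The decisive step, which I expect to be the main obstacle, is absorbing the self-referential quadratic term. Since $\mu_1\mu_2\le 1$, subadditivity of $t\mapsto t^{\mu_1\mu_2}$ splits the right-hand side into a data part $\lesssim L^{2-\mu_1-\mu_2}E^{\mu_1\mu_2}$ and a self part $\lesssim L^{2-\mu_1-\mu_2}C(f_0)^{\mu_1\mu_2}\norm{h}_{\mathcal{X}_1}^{2\mu_1\mu_2}$. Here the hypothesis $\mu_1\mu_2>1/2$ is used exactly to guarantee $2\mu_1\mu_2>1$: writing $\norm{h}_{\mathcal{X}_1}^{2\mu_1\mu_2} = \norm{h}_{\mathcal{X}_1}\,\norm{h}_{\mathcal{X}_1}^{2\mu_1\mu_2-1}\le \epsilon^{2\mu_1\mu_2-1}\norm{h}_{\mathcal{X}_1}$ and choosing $\epsilon=\epsilon(L,C(f_0))$ so small that the resulting prefactor is at most $1/2$, the self part is absorbed into the left-hand side. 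This yields the conditional stability estimate $\norm{h}_{\mathcal{X}_1}\le C\,L^{2-\mu_1-\mu_2}E^{\mu_1\mu_2}$. For the uniqueness claim one simply sets $E=0$: the key inequality reduces to $\norm{h}_{\mathcal{X}_1}\le C\norm{h}_{\mathcal{X}_1}^{2\mu_1\mu_2}$ with $2\mu_1\mu_2>1$ and $\norm{h}_{\mathcal{X}_1}\le\epsilon$, which forces $\norm{h}_{\mathcal{X}_1}=0$, i.e.\ $f=f_0$. The only points requiring genuine care are verifying $h\in\mathcal{X}_3$ so that \eqref{cond:third_z:z} applies, and ordering the choice of constants so that $\epsilon$ depends only on $L$ and $C(f_0)$ and not on $f$ itself.
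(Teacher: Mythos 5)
Your proposal is correct and is essentially the canonical argument for this result: the paper itself does not prove Proposition \ref{prop:StUh-LS2020} but quotes it from \cite{SU12}*{Theorem 2}, whose proof is exactly your chain ($\mathcal{X}$-interpolation, then \eqref{cond:third_z}, then $\mathcal{Y}$-interpolation with \eqref{cond:third_z:z} giving the factor $L^{1-\mu_2}$, then the quadratic remainder bound \eqref{cond:second_z}, and finally absorption of $\norm{h}_{\mathcal{X}_1}^{2\mu_1\mu_2}$ using $2\mu_1\mu_2>1$ and $\norm{h}_{\mathcal{X}_1}\leq\epsilon$), including your correct observation that $f_0\in\mathcal{X}_3$ is implicitly needed for \eqref{cond:third_z:z} and is automatic in the application where $f_0=0$. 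One small bookkeeping caveat: your chain yields the exponent $L^{1-\mu_1\mu_2}$, which is dominated by the stated $L^{2-\mu_1-\mu_2}$ only when $L\geq 1$ (since $(2-\mu_1-\mu_2)-(1-\mu_1\mu_2)=(1-\mu_1)(1-\mu_2)\geq 0$), so your reduction to $L\geq 1$ is the right move for the intended regime (here $L=M>1$), but it does not literally recover the stated $L$-dependence for $L<1$ --- a cosmetic point that does not affect the application.
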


The task now is to verify that $\mathcal{A}$ and $A_0$ from Section \ref{sec:Intro-LS2020} satisfy all the conditions of Proposition \ref{prop:StUh-LS2020} in appropriate Banach spaces.
We first introduce some useful notations. If $D \subset \R^n$ is closed, define for any $s_0 \in \mathbb{R}$ the set
\[
H^{s_0}_D(\R^n) = \{ f \in H^{s_0}(\R^n) \,:\, \supp(f) \subset D \}.
\]
We also write 
\begin{equation}\label{omega_sigma_z}
\Omega_{\sigma} = \{ x \in \R^n \,:\, \abs{x} < 1-\sigma \}
\end{equation}
where $\sigma \in (0,1)$ is fixed as in \eqref{outside:ball}. Let $s_0>n/2 +2$ and $M>1$. The map $\mathcal{A}$ will be defined in the open subset  
\[
\mathcal{V}_M^{s_0}(\R^n) = \{ f \in H^{s_0}_{\ol{\Omega}_{\sigma}}(\R^n) \,:\, M^{-1} < 1+f < M \}
\]
of $H^{s_0}_{\ol{\Omega}_{\sigma}}(\R^n)$ as the map 
\begin{equation}\label{A:linear-f}
\mathcal{A}: \mathcal{V}_M^{s_0}(\R^n) \to H^{-1}((-T,T); H^{1/2}({\mathbb{S}}^{n-1}))|_{\{ t > z\}}, \ \ \mathcal{A}(f) = {\mathcal{U}}|_{\Sigma_T \cap \{ t > z \}}
\end{equation}
where $f = \eta - 1$.
This is a well-defined map by Proposition \ref{prop:well_posedness_z_1}. We will prove in Lemma \ref{lemma_a_coneone} that the linearization of $\mathcal{A}$ at $0$ is given by 
\begin{equation}\label{A_0:linear}
A_0: H^{s_0}_{\ol{\Omega}_{\sigma}}(\R^n)  \to H^{-1}((-T,T); H^{1/2}({\mathbb{S}}^{n-1}))|_{\{ t > z \}}, \ \ A_0f = U|_{\Sigma_T \cap \{ t > z \}}
\end{equation}
where $U$ solves 
\begin{equation} \label{eq:Ud-LS2020}
\left\{\begin{aligned}
& (\partial_t^2 -\Delta) U = -f(x)\, \delta(t - z)  && \text{in}\; \R^{n+1}, \\
& U|_{\left\{t<-1\right\}} = 0.
\end{aligned}\right.
\end{equation}

Let us verify the conditions of Proposition \ref{prop:StUh-LS2020} one by one. 

\subsection*{Condition \texorpdfstring{\eqref{cond:second_z}}{(2.2)}}

We claim that the map $\mathcal{A}$, defined by \eqref{A:linear-f}, verifies the condition \eqref{cond:second_z} with
\[
\mathcal{X}_1= H^{s_0}_{\ol{\Omega}_{\sigma}}(\R^n), \quad \mathcal{Y}_1=H^{-3}((-T,T); H^{1/2}({\mathbb{S}}^{n-1}))|_{\{ t > z \}}.
\]
Indeed, the proof is contained in the following result, where we write 
\begin{align*}
\hat{\mathcal{A}}: \mathcal{V}_M^{s_0}(\R^n) \to H^{-1}((-T,T); H^{1/2}({\mathbb{S}}^{n-1})), \ \ & \hat{\mathcal{A}}(f) = {\mathcal{U}}|_{\Sigma_T}, \\
\hat{A}_0: H^{s_0}_{\ol{\Omega}_{\sigma}}(\R^n)  \to H^{-1}((-T,T); H^{1/2}({\mathbb{S}}^{n-1})), \ \ &\hat{A}_0f = U|_{\Sigma_T}.
\end{align*}
Thus $\mathcal{A}$ and $A_0$ are the restrictions $\mathcal{A}(f) = \hat{\mathcal{A}}(f)|_{\{ t > z \}}$ and $A_0 f = \hat{A}_0 f|_{\{ t > z \}}$.

\begin{Lemma} \label{lemma_a_coneone}
Let $s_0\gg n/2 +2$, $M>1$ and $T>1$. The map $\hat{\mathcal{A}}$ is well defined
\[
\hat{\mathcal{A}}: \mathcal{V}_M^{s_0}(\R^n) \subset H^{s_0}_{\ol{\Omega}_{\sigma}}(\R^n) \to H^{-1}((-T,T); H^{1/2}({\mathbb{S}}^{n-1})).
\]
Moreover, it is $C^{1,1}$ near $0$ as a map $\mathcal{V}_M^{s_0}(\R^n) \to H^{-3}((-T, T); H^{1/2}({\mathbb{S}}^{n-1}))$, so that 
\begin{equation} \label{remainder_term_z_z_statement}
\norm{\hat{\mathcal{A}}(f)- \hat{\mathcal{A}}(0) - \hat{A}_0 (f)}_{H^{-3}((-T, T); H^{1/2}({\mathbb{S}}^{n-1}))} \lesssim \norm{f}_{H^{s_0}(\R^n)}^2,
\end{equation}
for all $f\in \mathcal{V}_M^{s_0}(\R^n)$ near the origin.
\end{Lemma}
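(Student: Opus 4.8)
The plan is to view $\hat{\mathcal A}$ as a perturbation of the free solution $\mathcal U_0 := H_1(t-z)$, which solves \eqref{eq:2-LS2020} with $\eta\equiv 1$, and to split off its linear and quadratic parts explicitly. Well-definedness into $H^{-1}((-T,T);H^{1/2}(\mathbb S^{n-1}))$ is a direct consequence of Proposition \ref{prop:well_posedness_z_1}: it furnishes a unique $\mathcal U\in H^{-1}((-T,T);H^1_{loc}(\R^n))$, and the spatial trace theorem (applied in $x$, commuting with the temporal Sobolev index) maps $H^1_{loc}(\R^n)$ boundedly to $H^{1/2}(\mathbb S^{n-1})$ on $\partial\Omega$, as already recorded in \eqref{estimate_local_z_z}. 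Finite speed of propagation, together with $\supp f\subset\ol{\Omega}_\sigma$, keeps all relevant fields in a fixed bounded set on the time window $(-T,T)$, so a single cutoff $\chi$ controls every local norm below. The substance of the lemma is the quadratic remainder estimate \eqref{remainder_term_z_z_statement}.

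To identify the derivative I would write $\mathcal U=\mathcal U_0+U+R$, with $U$ the solution of the linearized equation \eqref{eq:Ud-LS2020}. Using $\partial_t^2\mathcal U_0=\delta(t-z)$ and that $f=f(x)$ is time-independent, a direct computation cancels the order-$0$ and order-$1$ terms and leaves
\begin{equation*}
((1+f)\partial_t^2-\Delta)R=-f\,\partial_t^2 U=-\partial_t^2(fU),\qquad R|_{\{t<-1\}}=0,
\end{equation*}
where I used $(\partial_t^2-\Delta)\mathcal U_0=0$, $((1+f)\partial_t^2-\Delta)\mathcal U=0$ and $(\partial_t^2-\Delta)U=-f\delta(t-z)$. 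The decisive feature is that the source $-\partial_t^2(fU)$ is quadratic in $f$, being the product of $f$ with $U$, while $U$ is itself of size $O(\norm{f}_{H^{s_0}(\R^n)})$.

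The estimate then closes in three steps. First, bound the linearization: by the forward estimates for \eqref{eq:Ud-LS2020} (Proposition \ref{prop:well_posedness_z_1} together with the progressive-wave structure of Proposition \ref{prop:sol_smooth_1}) one gets $\norm{U}_{H^{-1}((-T,T);H^1_{loc}(\R^n))}\lesssim\norm{f}_{H^{s_0}(\R^n)}$. Second, estimate the source: since $s_0\gg n/2+2$, $H^{s_0}(\R^n)$ is a multiplication algebra acting boundedly on $H^1$, and, $f$ being time-independent, this multiplication commutes with the temporal negative-order norm, so $\norm{fU}_{H^{-1}((-T,T);H^1_{loc})}\lesssim\norm{f}_{H^{s_0}}\norm{U}\lesssim\norm{f}_{H^{s_0}}^2$ and hence $\norm{\partial_t^2(fU)}_{H^{-3}((-T,T);H^1_{loc})}\lesssim\norm{f}_{H^{s_0}}^2$. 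Third, apply the forward well-posedness for the variable-coefficient wave equation from the Appendix to $R$ with this source; since $1+f$ is close to $1$ and satisfies \eqref{outside:ball_z}, the corresponding energy and trace estimate gives $\norm{R|_{\Sigma_T}}_{H^{-3}((-T,T);H^{1/2}(\mathbb S^{n-1}))}\lesssim\norm{\partial_t^2(fU)}\lesssim\norm{f}_{H^{s_0}}^2$. As $\hat{\mathcal A}(f)-\hat{\mathcal A}(0)-\hat A_0 f=R|_{\Sigma_T}$, this is exactly \eqref{remainder_term_z_z_statement}. The Lipschitz dependence of the derivative needed for the full $C^{1,1}$ claim follows by the same scheme, linearizing at a nearby $f$ rather than at $0$ and estimating differences.

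I expect the third step, together with the regularity bookkeeping feeding into it, to be the main obstacle. The source $-\partial_t^2(fU)$ is genuinely singular: $U$ carries a jump across the characteristic surface $\{t=z\}$ produced by the $\delta(t-z)$ forcing, so $\partial_t^2 U$ has distributional contributions supported there, and one must propagate this through a variable-coefficient wave equation while still controlling the boundary trace in $H^{-3}((-T,T);H^{1/2})$. Writing the source as $\partial_t^2$ of the milder object $fU$ is what makes this work, since it lets the two lost time derivatives be absorbed into the negative temporal index of the target, so that the Appendix estimates apply verbatim. Ensuring the bound is uniformly quadratic (rather than merely $o(\norm{f}_{H^{s_0}})$) hinges on all constants depending only on $M$ and the fixed geometric data, which is exactly the explicit constant tracking built into Proposition \ref{prop:well_posedness_z_1} and the Appendix.
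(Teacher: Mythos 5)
Your proposal is correct, but it organizes the remainder estimate differently from the paper. You keep the variable-coefficient operator on the left, deriving $((1+f)\partial_t^2-\Delta)R=-f\,\partial_t^2 U$ (a computation that checks out), so your source is manifestly quadratic: it is $f$ times the linearized solution $U$, and $\norm{U}_{H^{-1}((-T,T);H^1)}\lesssim \norm{f}_{H^{s_0}}$ follows from one application of the linear theory to \eqref{eq:Ud-LS2020}. You then need the \emph{variable-coefficient} forward estimate of Lemma \ref{eq:forward_problem_z} for $R$, with the hypotheses \eqref{wave_ellipticity}--\eqref{sobolev_multiplier} verified for $\eta=1+f$ exactly as in the proof of Proposition \ref{prop:well_posedness_z_1}. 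The paper instead moves all variable-coefficient terms to the right, keeping the free d'Alembertian: its remainder $\mathcal{R}=\mathcal{U}-\mathcal{U}_0-U$ solves $(\partial_t^2-\Delta)\mathcal{R}=-f\,\partial_t^2(\mathcal{U}-H_1)$, and quadratic smallness is extracted from a second, auxiliary equation $(\partial_t^2-\Delta)(\mathcal{U}-H_1)=-f\,\partial_t^2\mathcal{U}$ showing $\mathcal{U}-H_1=O(\norm{f}_{L^\infty})$; this costs two applications of the (constant-coefficient) forward estimate and a uniform bound on the nonlinear solution $\partial_t^2\mathcal{U}$, but in exchange yields the stronger bound $\norm{f}_{L^{\infty}(\R^n)}^2$, from which \eqref{remainder_term_z_z_statement} follows by Morrey. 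Your route trades that for a single application of the variable-coefficient estimate with a source depending only on the linearized solution, which is arguably cleaner bookkeeping; as written it gives the $\norm{f}_{H^{s_0}}^2$ bound directly (and, using $\norm{fU}\lesssim\norm{f}_{L^\infty}\norm{U}$ together with $\norm{U}\lesssim\norm{f}_{L^2}$, it could recover an $L^\infty$-flavoured bound as well). The two equations are in fact equivalent rearrangements of the same identity, since $-f\,\partial_t^2(U+R)=-f\,\partial_t^2(\mathcal{U}-H_1)$. Both arguments share the decisive mechanisms you identify: writing the source as $\partial_t^2$ of a milder object so the two lost time derivatives are absorbed by the drop from $H^{-1}$ to $H^{-3}$ in time, and using the time-independence of $f$ so that multiplication by $f$ commutes with the negative temporal norm; your treatment of the $C^{1,1}$ claim by relinearizing at nearby $f$ is no less complete than the paper's, which likewise only proves \eqref{remainder_term_z_z_statement} at the origin.
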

\begin{proof}
The fact that $\hat{\mathcal{A}}$ maps $\mathcal{V}_M^{s_0}(\R^n)$ to $H^{-1}((-T,T); H^{1/2}({\mathbb{S}}^{n-1}))$ is an immediate consequence of Proposition \ref{prop:well_posedness_z_1}. Let us move to prove the $C^{1,1}$ regularity near $0$. Fix $\eta=1+f$ with $f\in \mathcal{V}_M^{s_0}(\R^n)$ near the origin. Let $\hat{A}_0 f = U|_{\Sigma_T}$ where $U$ is as in \eqref{eq:Ud-LS2020}, and let $\calR$ be defined by 
\[
\hat{\mathcal{A}}(f)= \hat{\mathcal{A}}(0) + \hat{A}_0 (f) + \calR|_{\Sigma_T}, \quad \calR|_{\Sigma_T}:=\hat{\mathcal{A}}(f)- \hat{\mathcal{A}}(0) - \hat{A}_0 (f).
\]
In order to prove \eqref{remainder_term_z_z_statement}, we will actually prove a stronger estimate
\begin{equation*} 
\norm{\calR|_{\Sigma_T}}_{H^{-3}((-T, T); H^{1/2}({\mathbb{S}}^{n-1}))} \lesssim \norm{f}_{L^{\infty}(\R^n)}^2, \quad f\in H^{s_0}_{\ol{\Omega}}(\R^n),
\end{equation*}
which trivially implies the required estimate by using Morrey's inequality since $s_0>n/2$. To do that, we first set
\begin{equation*}
\hat{\mathcal{A}}(f)= \mathcal{U}|_{\Sigma_T}, \quad \hat{\mathcal{A}}(0)= \mathcal{U}_0|_{\Sigma_T}, \quad \hat{A}_0(f)=U|_{\Sigma_T},
\end{equation*}
where the distributions $\mathcal{U}$, $\mathcal{U}_0$ and $U$ satisfy
\begin{equation*}
\begin{aligned}
(\eta (x)\partial_t^2 -\Delta)\s\mathcal{U}& =0, & \; \; \text{in}\; \R^{n+1}, & \quad \mathcal{U}|_{\left\{t<-1\right\}} = H_1 (t-z),\\
(\partial_t^2 -\Delta)\s \mathcal{U}_0& =0, & \; \; \text{in}\; \R^{n+1}, & \quad \mathcal{U}_0|_{\left\{t<-1\right\}} = H_1 (t-z),\\
(\partial_t^2 -\Delta)U&=-f \, \delta(t-z), & \; \; \text{in}\; \R^{n+1},& \quad U|_{\left\{t<-1\right\}} = 0.
\end{aligned}
\end{equation*}
By Proposition \ref{prop:well_posedness_z_1}, we deduce that $\mathcal{U} \in H^{-1}((-T,T); H^1(\R^n))$. By uniqueness of distributional solutions, we have $\mathcal{U}_0(y,z,t)= H_1(t-z)$. Furthermore, Proposition \ref{prop:sol_smooth_1} ensures that $U\in H^{-1}((-T, T); L^2(\R^n))$. On the other hand, a straightforward computation shows that $\mathcal{R}:= \mathcal{U}-\mathcal{U}_0-U$ satisfies in $\mathbb{R}^{n+1}$
\begin{equation} \label{eq:ineq_z}
(\partial_t^2- \Delta)\s\mathcal{R}= -f\, \partial_t^2 ( \mathcal{U} - H_1), \qquad \mathcal{R}|_{\left\{t<-1\right\}} = 0.
\end{equation}
In addition, we also have in $\mathbb{R}^{n+1}$
\begin{equation}\label{eq:u-h_1_z}
(\partial_t^2- \Delta)(\mathcal{U} - H_1)= -f\s\partial_t^2\s  \mathcal{U}, \qquad  \mathcal{U} - H_1|_{\left\{t<-1\right\}} = 0.
\end{equation}
Note that the sources on the right of above equations belong to $H^{-3}((-T, T); L^2(\R^n))$. Since $f = f(x)$ is independent of $t$, for all $\alpha\geq 0$ and any arbitrary $F\in H^{-\alpha}((-T, T); L^2(\R^n))$ one has 
\[
\left\| f\s F \right\|_{H^{-\alpha}((-T, T); L^{2}(\R^n))} \lesssim \norm{f}_{L^\infty(\R^n)}\left\| F \right\|_{H^{-\alpha}((-T, T); L^{2}(\R^n))}.
\]
We apply this inequality with $F=\partial_t^2 ( \mathcal{U} - H_1)\in H^{-3}((-T, T); L^2(\R^n))$, see the source on the right hand side of \eqref{eq:ineq_z}. These facts combined with Lemma \ref{eq:forward_problem_z} give that 
\begin{equation} \label{bound:remainder_term_z_1}
\left\|\mathcal{R}\right\|_{H^{-3}((-T, T); H^{1}(\R^n))} \lesssim \norm{f}_{L^\infty(\R^n)}\left\| \partial_t^2 ( \mathcal{U} - H_1)\right\|_{H^{-3}((-T, T); L^{2}(\R^n))}.
\end{equation}
We bound the norm on the left with the help of \eqref{eq:u-h_1_z} and Lemma \ref{eq:forward_problem_z} as follows
\begin{align*}
\norm{\partial_t^2( \mathcal{U} - H_1)}_{H^{-3}((-T, T); L^{2}(\R^n))}& \leq \norm{\partial_t^2( \mathcal{U} - H_1)}_{H^{-3}((-T, T); H^{1}(\R^n))} \\
&\,\leq \norm{\mathcal{U} - H_1}_{H^{-1}((-T, T); H^{1}(\R^n))}\\
& \, \lesssim \norm{ f\s\s \partial_t^2\s   \mathcal{U}}_{H^{-1}((-T, T); L^{2}(\R^n))} \\
& \, \lesssim  \norm{f}_{L^\infty(\R^n)} \left\| \partial_t^2 \s \mathcal{U} \right\|_{H^{-1}((-T, T); L^{2}(\R^n))}.
\end{align*}
This estimate combined with \eqref{bound:remainder_term_z_1} gives
\begin{equation} \label{eq:ret-LS2020}
	\norm{\mathcal{R}}_{H^{-3}((-T, T); H^{1}(\R^n))}
	\lesssim \norm{f}_{L^\infty(\R^n)}^2 \norm{\partial_t^2 \mathcal{U}}_{H^{-1}((-T, T); L^{2}(\R^n))}.
\end{equation}
Finally, by using the trace theorem, \eqref{eq:ret-LS2020} gives the desired estimate for $\mathcal{R}_{|_{\Sigma_T}}$.
\end{proof}

\noindent{Recall the definition of the subset $\Omega_\sigma$ given in \eqref{omega_sigma_z}.

\subsection*{Condition \texorpdfstring{\eqref{cond:third_z}}{(2.3)}}

By Proposition \ref{prop:fA1f-LS2020}, whose proof is presented in Section \ref{sec:KK-LS2020}, we consider 
\[
\mathcal{X}_2= L^2_{\ol{\Omega}_{\sigma}}(\R^n), \quad \mathcal{Y}_2=H^{3/2}(\Sigma_T \cap \{ t > z \}).
\]
Now for any $f \in \mathcal{X}_1 = H^{s_0}_{\ol{\Omega}_{\sigma}}(\R^n)$, Proposition \ref{prop:fA1f-LS2020} and the trace theorem imply that 
\[
\norm{f}_{\mathcal{X}_2} \leq C \norm{A_0 f}_{\mathcal{Y}_2}.
\]
Thus condition \eqref{cond:third_z} is satisfied.

\subsection*{Condition \texorpdfstring{\eqref{cond:first_z}}{(2.1)}}

We have to choose a pair of Banach spaces $\mathcal{X}_3 \subset \mathcal{X}_1$ and $\mathcal{Y}_3 \subset \mathcal{Y}_2$ so that for some $\mu_1, \mu_2\in (0,1)$ with $\mu_1\mu_2>1/2$ one has
\begin{equation}\label{inter:estimate_z_z}
\norm{f}_{H^{s_0}(\R^n)} \lesssim \norm{f}_{L^2(\R^n)}^{\mu_1}\norm{f}_{\mathcal{X}_3}^{1-\mu_1}, \quad f\in \mathcal{X}_3,
\end{equation}
and 
\begin{equation} \label{int:spaces_z_1}
\norm{g}_{H^{3/2}(\Sigma_T \cap \{ t > z \})}\lesssim \norm{g}_{H^{-3}((-T,T); H^{1/2}({\mathbb{S}}^{n-1}))|_{\{t > z\}}}^{\mu_2}\norm{g}_{\mathcal{Y}_3}^{1-\mu_2}, \quad g\in \mathcal{Y}_3.
\end{equation}
Fix an arbitrary $\mu_1\in (0,1)$. Consider $s_1>s_0$ satisfying $s_0=0 (\mu_1) + s_1(1-\mu_1)$.
Using complex interpolation, see for instance \cite[Theorem 6.4.5]{BL12}, yields \eqref{inter:estimate_z_z} with
\[
\mathcal{X}_3= H^{s_1}_{\ol{\Omega}_{\sigma}}(\R^n).
\]
On the other hand, if we fix $\mu_2 \in (0,1)$ and choose $s_2 > 3/2$ with $3/2 = (-3) (\mu_2) + s_2 (1-\mu_2)$, then interpolation gives 
\[
\norm{g}_{H^{3/2}(\Sigma_T \cap \{ t > z \})}\lesssim \norm{g}_{H^{-3}(\Sigma_T \cap \{ t > z \})}^{\mu_2} \norm{g}_{H^{s_2}(\Sigma_T \cap \{ t > z \})}^{1-\mu_2}.
\]
To see this, consider $M := \overline{\Sigma_T \cap \{ t > z \}}$ as a compact manifold with smooth boundary and  embed $M$ in some compact manifold $N$ without boundary. If $R$ and $E$ are corresponding restriction and bounded extension operators, then interpolation on $N$ (see e.g.~\cite[Proposition 4.3.1]{Taylor_PDE1}) yields
\begin{align*}
	\norm{g}_{H^{3/2}(M)}
	& = \norm{REg}_{H^{3/2}(M)}
	\leq \norm{Eg}_{H^{3/2}(N)}
	\lesssim \norm{Eg}_{H^{-3}(N)}^{\mu_2} \norm{Eg}_{H^{s_2}(N)}^{1-\mu_2} \\
	& \lesssim \norm{g}_{H^{-3}(M)}^{\mu_2} \norm{g}_{H^{s_2}(M)}^{1-\mu_2}.
\end{align*}
Hence, choosing 
\[
\mathcal{Y}_3= H^{s_2}(\Sigma_T \cap \{ t > z \})
\]
implies \eqref{int:spaces_z_1}. Note that we can make $\mu_1 \mu_2$ as close to $1$ as we want by choosing $s_1$ and $s_2$ large enough.
Therefore, the condition $\mu_1\mu_2>1/2$ is satisfied.

\subsection*{Condition \texorpdfstring{\eqref{cond:third_z:z}}{(2.4)}} We have to prove that
\[
\norm{U}_{H^{s_2}(\Sigma_T \cap \{ t > z \})} \lesssim \norm{f}_{H^{s_1}(\R^n)}.
\]
Indeed, let us take $\alpha=s_2$ and $\beta=s_2+1/2$ in \eqref{cont:A_o_f}. For any fixed $N\in \mathbb{N}$, we consider $s_1, s_2>0$ so that 
\begin{equation}\label{s_+op:m}
2N+ s_2 +5/2<s_1.
\end{equation}
Note that if we further increase $s_1$ in the line after \eqref{int:spaces_z_1}, then $\mu_1$ will increase closer to $1$ and we still have $\mu_1 \mu_2 > 1/2$.
Thus we may assume that \eqref{s_+op:m} holds, and by \eqref{cont:A_o_f} we get
\[
\norm{U}_{H^{s_2}(\Sigma_T \cap \{ t > z \})} \lesssim \norm{f}_{H^{2N+s_2+ 5/2}(\R^n)} \leq \norm{f}_{H^{s_1}(\R^n)}.
\]
Thus condition \eqref{cond:third_z:z} is also satisfied.

\medskip

We are now in a position to apply Proposition \ref{prop:StUh-LS2020}: there exist $\mu\in (0,1)$, $C = C(L)>0$ and $\varrho>0$ small enough so that 
\[
\norm{f}_{H^{s_0}(\mathbb{R}^n)} \leq C(L) \norm{\mathcal{A}(f) - \mathcal{A}(0)}_{H^{-3}((-T,T); H^{1/2}({\mathbb{S}}^{n-1}))|_{\{ t > z \}}}^{\mu}
\]
whenever $\norm{f}_{H^{s_0}(\mathbb{R}^n)} \leq \varrho$ and $\norm{f}_{H^{s_1}(\mathbb{R}^n)} \leq L$ for $f \in H^{s_1}_{\ol{\Omega}_{\sigma}}(\R^n)$. If we recall that $\mathcal{A}(f) = \hat{\mathcal{A}}(f)|_{\{t > z \}}$ and $\tilde{\mathcal{A}}(1+f) = \p_t^2 \hat{A}(f)$, Lemma \ref{lemma_u_dttu} applied with $k=-5$ implies that 
\[
\norm{\eta-1}_{H^{s_0}(\mathbb{R}^n)} \leq C(L) \norm{\tilde{\mathcal{A}}(\eta-1) - \tilde{\mathcal{A}}(0)}_{H^{-5}((-T,T); H^{1/2}({\mathbb{S}}^{n-1}))|_{\{ t > z \}}}^{\mu}
\]
when $\norm{\eta-1}_{H^{s_0}(\mathbb{R}^n)} \leq \varrho$ and $\norm{\eta}_{H^{s_1}(\mathbb{R}^n)} \leq L$ for $f \in H^{s_1}_{\ol{\Omega}_{\sigma}}(\R^n)$. This finishes the proof of Theorem  \ref{th:main_result_z}.

\section{The linearized map}  \label{sec:lima-LS2020}

We now concentrate on studying the main properties of the linearization at $0$ of the map $\mathcal{A}$ given in \eqref{A:linear-f}. Recall that this linearization is denoted by $A_0$ and it is given by \eqref{A_0:linear} and \eqref{eq:Ud-LS2020}. The existence and uniqueness of solutions to \eqref{eq:Ud-LS2020} is provided by the following result. Its proof is provided in the Appendix. 

\begin{Proposition} \sl \label{prop:sol_smooth_1}
Let $s_0\gg n/2 +2$ and $T>1$. Consider $f \in H^{s_0}_{\ol{\Omega}}(\mathbb{R}^n)$.
There is a unique  distributional solution $U(y,z,t)$ to \eqref{eq:Ud-LS2020}, and it is supported in the region $\{ t\geq z\}$.
In particular, one has
\begin{equation*}
 U(y,z,t) = u(y,z,t)H(t-z),
 \end{equation*}
  where $u$  is a ${C^2}$ function in $\left\{ t\geq z \right\}$ satisfying the IVP \begin{equation} \label{eq:nu_1-LS2020_z_1_z}
\left\{ \begin{aligned}
(\partial_t^2 -\Delta) u & = 0 & \text{in}\; \left\{ t> z\right\}, \\ 
(\partial_t + \partial_z) u & = -\frac{1}{2} f &\text{on } \left\{ t = z\right\},\\
u|_{\left\{t<-1 \right\}}&=0.
\end{aligned}\right.
\end{equation}
In addition, given any $K\geq 3$ we may arrange that $u$ is $C^K$ in the set $\left\{ t\geq z \right\}$ by taking $s_0$ large enough. In particular, one always has $U \in H^{-1}((-T,T), L^2(\R^n))$.
\end{Proposition}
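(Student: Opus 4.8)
The plan is to solve \eqref{eq:Ud-LS2020} by a progressive wave expansion adapted to the characteristic hyperplane $\{t=z\}$ and then to identify the resulting distribution with $uH(t-z)$. I would start from the ansatz $U = u(x,t)H(t-z)$: a direct distributional computation, using $\partial_z H(t-z) = -\delta(t-z)$, shows that the two contributions of $\delta'(t-z)$ (one from $\partial_t^2$, one from $\partial_z^2$) cancel, leaving
\[
(\partial_t^2-\Delta)\big(uH(t-z)\big) = \big[(\partial_t^2-\Delta)u\big]H(t-z) + 2\big[(\partial_t+\partial_z)u\big]\big|_{t=z}\,\delta(t-z).
\]
Matching this with $-f\,\delta(t-z)$ reduces the problem to the characteristic initial value problem \eqref{eq:nu_1-LS2020_z_1_z}. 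I would record this reduction first, as it both explains the coupling condition on $\{t=z\}$ and shows that solving \eqref{eq:nu_1-LS2020_z_1_z} suffices for existence.

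To construct $u$ I would look for it in the form $u(x,t) = \sum_{k\geq 0} u_k(x)\,\tfrac{(t-z)^k}{k!}$, so that $U = \sum_k u_k(x)\,\tfrac{(t-z)^k}{k!}H(t-z)$. Substituting and using the elementary identity $(\partial_t^2-\Delta)(a(x)\,g(t-z)) = 2(\partial_z a)\,g'(t-z) - (\Delta a)\,g(t-z)$, the coefficients of the successively smoother functions $\tfrac{(t-z)^k}{k!}H(t-z)$ must vanish, which gives the transport recursion
\[
2\,\partial_z u_0 = -f, \qquad 2\,\partial_z u_{k+1} = \Delta u_k \quad (k \geq 0),
\]
solved by integrating in $z$ from $-1$ (where everything vanishes by the support condition), e.g.\ $u_0(y,z) = -\tfrac12\int_{-1}^z f(y,s)\dif{s}$. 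One checks directly that this forces $u_0 = u|_{t=z}$ and $(\partial_t+\partial_z)u|_{t=z} = \partial_z u_0 = -\tfrac12 f$, so \eqref{eq:nu_1-LS2020_z_1_z} holds, and a telescoping cancellation in the recursion yields $(\partial_t^2-\Delta)U = -f\,\delta(t-z)$ exactly.

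Since each step of the recursion costs two spatial derivatives of $f$, the full series requires $f \in C^\infty$; for $f \in H^{s_0}$ with $s_0$ finite I would instead truncate at an order $N = N(K)$, producing $U_N = \tilde u_N H(t-z)$ with $\tilde u_N$ a polynomial in $(t-z)$ whose coefficients have regularity down to $H^{s_0-2N}$, together with a remainder source $r_N = (\Delta u_N)\tfrac{(t-z)^N}{N!}H(t-z)$ that vanishes to high order on $\{t=z\}$ and is as smooth as desired once $s_0$ is large. The correction $U_c$ solving $(\partial_t^2-\Delta)U_c = -r_N$ with $U_c|_{t<-1}=0$ is then furnished by the standard forward well-posedness and regularity theory of the Appendix; by finite speed of propagation it is supported in $\{t\geq z\}$, and being $C^K$ and vanishing for $t<z$ it has the form $u_cH(t-z)$. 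Setting $u = \tilde u_N + u_c$ gives the desired $C^K$ solution, with Sobolev embedding ($s_0 \gg n/2+2$, or larger for bigger $K$) providing the pointwise regularity.

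For the support statement I would invoke causality: every forward light cone issued from a source point $(y_0,z_0,z_0)\in\{t=z\}$ obeys $t-z_0 \geq |x-(y_0,z_0)| \geq |z-z_0| \geq z-z_0$, hence lies in $\{t\geq z\}$, so the solution vanishes for $t<z$. Uniqueness follows by applying the uniqueness part of the forward problem (the Appendix well-posedness results) to the difference of two solutions, and the membership $U \in H^{-1}((-T,T);L^2(\R^n))$ is immediate, since $u$ is bounded and, by finite speed, compactly supported in $x$ on $|t|\leq T$, so $U \in L^2 \hookrightarrow H^{-1}((-T,T);L^2)$. The main obstacle is precisely that $\{t=z\}$ is characteristic: standard Cauchy theory does not apply, so the progressive wave expansion must be controlled quantitatively, tracking the loss of two derivatives per term and hence the exact dependence of the attainable regularity $C^K$ on $s_0$, while arranging the truncation correction so as to preserve both the support in $\{t\geq z\}$ and the factored form $uH(t-z)$.
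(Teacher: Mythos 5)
Your proposal is correct and takes essentially the same route as the paper's proof: the same progressive wave expansion with the transport recursion $2\,\partial_z u_0 = -f$, $2(k+1)\,\partial_z a_{k+1} = \Delta a_k$, truncation at finite order with the remainder solved by forward hyperbolic well-posedness/regularity theory, causality (backward light cones from $\{t<z\}$ miss $\{t\geq z\}$) for the support claim, and Sobolev embedding for the $C^K$ regularity, with uniqueness reduced to the homogeneous forward problem. The differences are cosmetic only: the paper cites H\"ormander for uniqueness and for the remainder's $H^{\beta_1+1}H^{\beta_2}$ regularity and writes out the explicit parameter choices ($\beta_1=2$, $\beta_2$, $m=3N+1$, $N$ depending on the parity of $n$) that you leave as a plan, and with your conventions the correction equation should read $(\partial_t^2-\Delta)U_c = +r_N$ rather than $-r_N$, a harmless bookkeeping slip.
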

Using standard ODE techniques, one has 
\begin{equation} \label{eq:nu1z-LS2020-uF}
u(y,z,t)_{|_{t=z}}= -\frac{1}{2}\int_{-\infty}^0 f (y, s+z) ds:=F(y,z,z)
\end{equation}
It turns out that instead of \eqref{eq:nu_1-LS2020_z_1_z} it is more convenient to consider the characteristic initial value problem 
\begin{equation} \label{eq:nu1z-LS2020}
\left\{\begin{aligned}
(\partial_t^2 -\Delta) u & = 0 && \text{in~} \{ t > z\}, \\ 
u & = F && \text{on~} \{ t = z\}.
\end{aligned}\right.
\end{equation}
We will apply this with $F$ given in \eqref{eq:nu1z-LS2020-uF}, so that the initial value $F(y,z,z)$ satisfies 
\begin{equation*} 
    ZF(y,z,z) = -\frac{1}{2} f(y,z).
\end{equation*}
Here $Z$ is defined as $Z F(y,z,z) := \partial_s \big( F(y,s,s) \big) |_{s = z}$. Thanks to the chain rule, one can see that $Z:= \p_t + \p_z$.

\smallskip

As explained in Section \ref{proof_main:result_z}, the Fr\'echet derivative of $\mathcal{A}$ at $0$ is the map $A_0 \colon f \mapsto U|_{\Sigma_T \cap \{ t > z \}}$.
However, it is technically easier to study the map 
\begin{equation} \label{eq:A1-LS2020}
A_0' \colon F \mapsto u|_{\Sigma_T \cap \{ t \geq z \}},
\end{equation}
where $u$ solves \eqref{eq:nu1z-LS2020}. The original map $A_0$ can be recovered by $A_0 f = E_0(A_0' F)$ where $F$ is defined in \eqref{eq:nu1z-LS2020-uF} and $E_0$ denotes extension by zero from $\Sigma_T \cap \{ t \geq z \}$ to $\Sigma_T$.
Therefore in the linearized part we mainly focus on the inverse problem for $A_0'$ instead of $A_0$.
We shall adapt the modified time-reversal method proposed in \cite{SU09} to study $A_0'$.
Recall the notations defined in \eqref{eq:nost-LS2020}.
In the time reversal procedure, we generate an approximate inverse for $A_0'$ as the map $h \mapsto v|_{\Gamma}$, where $v$ solves the problem
\[
(\partial_t^2 -\Delta) v = 0 \ \text{in} \ Q, \quad
v = h \ \text{on} \ \Sigma \cup \Sigma_-, \quad
v = v_0, \ v_t = v_1 \ \text{in} \ \Gamma_T,
\]
with prescribed boundary data $h$ and certain data $(v_0, v_1)$ at the final time $t = T$. For the time reversal argument it would be natural to work with energy spaces. However, it is not obvious that $h \in H^1(\Sigma)$ would imply $v|_{\Gamma} \in H^1(\Gamma)$. We will prove this fact by using energy estimates.
To that end, we first investigate an initial boundary value problem.

\subsection{An initial boundary value problem} \label{subsec:IBVP-LS2020}

For a function $h$, we adopt the following convention:
\begin{equation*} 
	h_t := \partial_t h, \quad
	h_z := \partial_z h, \quad
	h_y := \nabla_y h, \quad
	h_x := (\nabla_y h, \partial_z h), \quad
	h_\nu := \nu \cdot h_x,
\end{equation*}
where $\nu \in \mathbb S^{n-1}$ is a unit vector.
We define a seminorm $\nrm[\mathcal H]{\cdot}$ as follows,
\begin{equation} \label{eq:NN0-LS2020}
	\nrm[\mathcal H]{h} = \big( \frac 1 {\sqrt 2} \int_\Gamma (|\nabla_y h|^2 + |h_z + h_t|^2) \dif S \big)^{\frac 1 2}.
\end{equation}

We introduce the following PDE,
\begin{equation} \label{eq:vL-LS2020}
	\left\{\begin{aligned}
		(\partial_t^2 -\Delta) v & = G && \text{in}\; Q, \\ 
		v & = u && \text{on } \Sigma \cup \Sigma_-, \\
		v = \phi_0, \ v_t & = \phi_1 && \text{in}\; \Gamma_T.
	\end{aligned}\right.
\end{equation}

We have the following a priori estimate.

\begin{Proposition} \label{prop:Bdd2-LS2020}
	Assume $v \in C^2(\overline{Q})$ solves \eqref{eq:vL-LS2020} with $\phi_0 \in H^1(\Omega)$, $\phi_1 \in L^2(\Omega)$, $G \in L^2(Q)$ and $u|_{\Sigma \cup \Sigma_{-}} \in H^1(\Sigma \cup \Sigma_{-})$.
	For $T \geq 1$, we have
	\begin{equation} \label{eq:vnf-LS2020}
		\nrm[\mathcal H]{v}
		\leq C( \nrm[L^2(\Omega)]{\nabla \phi_0} + \nrm[L^2(\Omega)]{\phi_1} + \nrm[L^2(Q)]{G} + \sqrt{n} e^{T/2} \nrm[H^1(\Sigma)]{v}).
	\end{equation}
	for some constant $C$ independent of $v$, $\phi_0$, $\phi_1$, $G$, $n$ and $T$.
\end{Proposition}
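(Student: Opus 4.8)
The plan is to prove \eqref{eq:vnf-LS2020} through energy (multiplier) identities on the characteristic-adapted regions $\widetilde Q_\tau$ from \eqref{eq:nost-LS2020}, exploiting that $\{t=z\}$ is a characteristic surface for $\partial_t^2-\Delta$. Writing $(x,t)=(y,z,t)$ and multiplying $(\partial_t^2-\Delta)v=G$ by $v_t$, I would use the pointwise identity
\[
v_t(\partial_t^2-\Delta)v = \partial_t\Big(\tfrac12 v_t^2 + \tfrac12|\nabla v|^2\Big) - \nabla\cdot(v_t\nabla v),
\]
so that the spacetime field $J=(-v_t\nabla v,\ \tfrac12 v_t^2+\tfrac12|\nabla v|^2)$ has divergence $v_t G$. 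Applying the divergence theorem on $\widetilde Q_T$, whose boundary splits into the characteristic bottom face $\Gamma$ (outward null normal proportional to $(0,1,-1)$), the final slice $\Gamma_T$ (outward normal $(0,0,1)$), and the lateral piece $\widetilde\Sigma_T\subset\Sigma$ (outward normal $(\nu,0)$), the flux through $\Gamma$ collapses to the tangential derivatives $|\nabla_y v|^2+(v_t+v_z)^2$ — the transverse null derivative $v_t-v_z$ dropping out precisely because $\{t=z\}$ is characteristic — and reproduces $\tfrac12\nrm[\mathcal H]{v}^2$. The identity then reads, schematically,
\[
\tfrac12\nrm[\mathcal H]{v}^2 = \tfrac12\big(\nrm[L^2(\Omega)]{\nabla\phi_0}^2+\nrm[L^2(\Omega)]{\phi_1}^2\big) - \int_{\widetilde\Sigma_T} v_t\, v_\nu\dif S - \int_{\widetilde Q_T} v_t\, G\dif V .
\]
This is the backbone: the final-time Cauchy data appear with an $n,T$-independent constant, and it remains to bound the two integrals on the right.

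The source integral is controlled by $\nrm[L^2(\widetilde Q_T)]{v_t}\,\nrm[L^2(Q)]{G}$, so I need a uniform bound on the interior kinetic energy. For this I would run the same $v_t$-multiplier identity on $\widetilde Q_\tau$ for variable $\tau\in[-1,T]$, obtaining a differential inequality for the time-slice energy $E(\tau)=\tfrac12\int_{\widetilde\Gamma_\tau}(v_t^2+|\nabla v|^2)$. With the data prescribed at $\tau=T$ and the source entering linearly, Gronwall's inequality bounds $\sup_\tau E(\tau)$, hence $\nrm[L^2(\widetilde Q_T)]{v_t}^2\le\int_{-1}^T 2E(\tau)\dif\tau$, in terms of the final-data energy, $\nrm[L^2(Q)]{G}$, and the lateral contribution; the factor $e^{T/2}$ is exactly the Gronwall constant and, as in the statement, should be arranged to attach only to the lateral boundary term.

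The lateral flux $\int_{\widetilde\Sigma_T} v_t v_\nu\dif S$ is the crux, since the right-hand side of \eqref{eq:vnf-LS2020} only allows the Dirichlet norm $\nrm[H^1(\Sigma)]{v}$ and not the Neumann trace $v_\nu$. Here $v_t$ is a tangential derivative of $v|_\Sigma$ and is already controlled by $\nrm[H^1(\Sigma)]{v}$, so the task reduces to a hidden-regularity bound for $v_\nu$ in $L^2(\Sigma)$. I would obtain this from a Rellich/Pohozaev-type identity using the radial multiplier $q(x)=x$, which satisfies $q\cdot\nu=1$ on $\partial\Omega$ and $\nabla\cdot q=n$: integrating $(q\cdot\nabla v)(\partial_t^2-\Delta)v$ over $\widetilde Q_T$ isolates $\tfrac12\int_\Sigma v_\nu^2\dif S$ against interior energy terms (carrying the factor $n$, whence $\sqrt n$), the final-time energy, and tangential derivatives of the Dirichlet data. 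Combined with the energy bound of the previous step, this controls $\int_{\widetilde\Sigma_T}v_t v_\nu$ by $\sqrt n\, e^{T/2}\nrm[H^1(\Sigma)]{v}$ plus the data terms.

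Finally I would insert these bounds into the backbone identity and apply Cauchy–Schwarz together with Young's inequality to absorb cross terms, arriving at \eqref{eq:vnf-LS2020}. I expect the main obstacle to be the interplay between the energy estimate and the Neumann-trace (Rellich) estimate on the lateral boundary: each naturally controls a quantity feeding into the other, so the two must be run together and the resulting terms absorbed, all while keeping the non-smooth edges of $\widetilde Q_T$ (where $\Gamma$ meets $\Sigma$, and $\Sigma$ meets $\Gamma_T$) under control and tracking the explicit $\sqrt n$ and $e^{T/2}$ dependence so that it lands only on the lateral term. The characteristic nature of $\{t=z\}$, which makes the bottom flux degenerate to tangential derivatives, is what forces the use of regions adapted to this null geometry rather than the usual space-time cylinders.
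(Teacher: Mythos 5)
Your proposal follows essentially the same route as the paper: the $v_t$-multiplier identity on the slanted regions $\widetilde Q_\tau$ (with the characteristic bottom face contributing exactly the $\mathcal H$-seminorm), a Gronwall bound on the slice energy $e(\tau)$ driven from the final-time data, a Rellich-type identity with the radial multiplier $x\cdot\nabla v$ to control the Neumann trace $v_\nu$ on $\Sigma$ (this is the paper's Lemma \ref{lem:tdlv-LS2020}, adapted from \cite{RakeshSalo1}), and a final absorption of the cross terms via Young's inequality with a small parameter of size $(ne^T)^{-1}$. You also correctly identify the origins of the $\sqrt{n}$ (from $\divr x = n$) and $e^{T/2}$ (Gronwall) factors and the need to run the energy and hidden-regularity estimates together, which is exactly how the paper closes the argument.
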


To prove Proposition \ref{prop:Bdd2-LS2020}, we do some preparations first.
Let us reproduce the notations $\widetilde Q_\tau$, $\widetilde \Sigma_\tau$ and $\widetilde \Gamma_\tau$ given in \eqref{eq:nost-LS2020}:
\begin{equation*} 
	\left\{\begin{aligned}
		\widetilde Q_\tau & := \{(y,z,t) \,;\, (y,z) \in \Omega,\, z \leq t \leq \tau \} \subset Q, \\
		\widetilde \Sigma_\tau & := \{(y,z,t) \,;\, (y,z) \in \partial \Omega,\, z \leq t \leq \tau \} \subset \Sigma, \\
		\widetilde \Gamma_\tau & := \{(y,z,\tau)\,;\, (y,z) \in \Omega \} \cap \widetilde Q_\tau,
	\end{aligned}\right.
\end{equation*}
thus $\partial \widetilde Q_\tau = \widetilde \Gamma_\tau \cup \widetilde \Sigma_\tau \cup \Gamma$ for $\tau \geq 1$. Note also that when $-1 \leq \tau < 1$ the set $\widetilde \Gamma_\tau$ is a strict subset of $\Omega \times \{ \tau \}$.
To abbreviate, we denote $\square:= \partial^2_t- \Delta$.

\begin{Lemma} \label{lem:Bdd-LS2020}
	Under the same assumptions as in Proposition \ref{prop:Bdd2-LS2020}, we have
	\[
	\nrm[\mathcal H]{v}^2
	\leq C(\nrm[L^2(\Omega)]{\nabla \phi_0}^2 + \nrm[L^2(\Omega)]{\phi_1}^2 + e^T \nrm[L^2(Q)]{G}^2 + \nrm[L^2(\Sigma)]{v_t} \nrm[L^2(\Sigma)]{v_\nu})
	\]
	for some constant $C$ independent of $u$, $\phi_0$, $\phi_1$, $G$ and $T$.
\end{Lemma}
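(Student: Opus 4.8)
The plan is to use the energy (multiplier) method with multiplier $v_t$, the novelty being that the flux across the characteristic face $\Gamma = \{t=z\}$ reproduces exactly the seminorm $\nrm[\mathcal H]{\cdot}$. First I would record the energy–momentum identity: multiplying $\square v = G$ by $v_t$ gives
\[
\divr_{t,x}\Bigl(\tfrac12 v_t^2 + \tfrac12 |\nabla_x v|^2,\ -v_t \nabla_x v\Bigr) = v_t\,G .
\]
Writing $V$ for the vector field on the left and integrating over $\widetilde Q_T$, the divergence theorem splits the total flux along $\partial \widetilde Q_T = \Gamma_T \cup \widetilde\Sigma_T \cup \Gamma$ into three contributions. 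On $\Gamma_T = \Omega \times \{T\}$ the outward normal is $\partial_t$, so the flux is the energy $E(T) = \tfrac12 \int_\Omega (\phi_1^2 + |\nabla \phi_0|^2)$, bounded by $\nrm[L^2(\Omega)]{\nabla\phi_0}^2 + \nrm[L^2(\Omega)]{\phi_1}^2$. On the lateral face $\widetilde\Sigma_T \subset \Sigma$ the normal is spatial, giving $-\int_{\widetilde\Sigma_T} v_t v_\nu$, bounded by $\nrm[L^2(\Sigma)]{v_t}\,\nrm[L^2(\Sigma)]{v_\nu}$.

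The decisive step is the flux across $\Gamma$. Since $\{t=z\}$ carries unit outward normal $\tfrac1{\sqrt2}(-1,0,1)$ in $(t,y,z)$, and its surface measure is $dS = \sqrt2\, dy\, dz$, a direct computation using $\tfrac12 v_t^2 + \tfrac12 v_z^2 + v_t v_z = \tfrac12 (v_t+v_z)^2$ shows the flux equals $-\tfrac12 \int_\Omega \bigl(|\nabla_y v|^2 + (v_t+v_z)^2\bigr)\, dy\, dz = -\tfrac12 \nrm[\mathcal H]{v}^2$; here the factor $1/\sqrt2$ in the definition of $\nrm[\mathcal H]{\cdot}$ is precisely matched by the measure $dS=\sqrt2\,dy\,dz$. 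Collecting the three faces yields the master identity
\[
\tfrac12 \nrm[\mathcal H]{v}^2 = E(T) - \int_{\widetilde\Sigma_T} v_t v_\nu - \int_{\widetilde Q_T} v_t\,G .
\]

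It remains to control the source term, writing $\int_{\widetilde Q_T} v_t G \le \nrm[L^2(\widetilde Q_T)]{v_t}\, \nrm[L^2(Q)]{G}$; slicing $\widetilde Q_T$ in the time variable bounds $\nrm[L^2(\widetilde Q_T)]{v_t}^2 \le \int_{-1}^T 2 E(\tau)\, d\tau$ in terms of the energies $E(\tau) = \tfrac12\int_{\widetilde\Gamma_\tau}(v_t^2+|\nabla_x v|^2)$ of the horizontal slices. I expect this a priori bound to be the main obstacle, because of the slanted geometry. The remedy is to run the same divergence identity on the truncated region $R_\tau := \widetilde Q_T \setminus \widetilde Q_\tau$: its lower boundary is the slice $\widetilde\Gamma_\tau$ together with the part of $\Gamma$ lying over $\{z>\tau\}$, and since that characteristic part of the flux again enters with a favorable (nonpositive) sign it may simply be dropped. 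This yields, uniformly in $\tau \in [-1,T]$,
\[
E(\tau) \le A + \int_\tau^T E(t')\, dt', \qquad A := E(T) + \nrm[L^2(\Sigma)]{v_t}\,\nrm[L^2(\Sigma)]{v_\nu} + \tfrac12 \nrm[L^2(Q)]{G}^2 ,
\]
so Grönwall gives $E(\tau) \le A\, e^{T-\tau}$ and hence $\nrm[L^2(\widetilde Q_T)]{v_t}^2 \lesssim e^T A$. Substituting back into the master identity and absorbing the resulting cross term with Young's inequality produces the asserted estimate, with the exponential factor $e^T$ arising exactly from this Grönwall bound.
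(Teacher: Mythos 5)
Your proof is correct, and it rests on the same core ingredients as the paper's: the multiplier $v_t$, the divergence identity \eqref{eq:rts-LS2020} integrated over slanted space-time regions, the key observation that the flux through the characteristic face $\Gamma$ produces exactly the seminorm $\nrm[\mathcal H]{v}^2$ (your computation of the normal $\tfrac{1}{\sqrt 2}(-1,0,1)$, the measure $dS=\sqrt 2\,dy\,dz$, and the completion of the square $\tfrac12 v_t^2+\tfrac12 v_z^2+v_tv_z=\tfrac12(v_t+v_z)^2$ all check out), Cauchy--Schwarz on the lateral face, and a Gronwall bound for the interior term $\int v_t G$. Where you genuinely depart from the paper is in the domain decomposition used for the Gronwall step. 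The paper integrates over the \emph{lower} regions $\widetilde Q_\tau$ and runs Gronwall forward in $\tau$; on those regions the characteristic flux sits on the data side, so the resulting bound \eqref{eq:0g12-LS2020} for $e(\tau)$ carries a $\nrm[\mathcal H]{v}^2$ term, and the paper must return to the identity \eqref{eq:0g02-LS2020}, insert this bound, and absorb the $\nrm[\mathcal H]{v}^2$ contribution by the delicate choice $\epsilon=[2(e^\tau-1)]^{-1}$, arriving at \eqref{eq:0g03-LS2020}. You instead run the identity on the \emph{upper} truncations $R_\tau=\widetilde Q_T\setminus \widetilde Q_\tau$, where the characteristic portion of the lower boundary enters with a nonpositive sign and can simply be dropped; the backward Gronwall from $t=T$ then gives $E(\tau)\leq A\,e^{T-\tau}$ with $A$ consisting purely of data ($E(T)$, the lateral term, and $\nrm[L^2(Q)]{G}^2$), so no absorption of the $\mathcal H$-seminorm is ever needed --- only a trivial Young inequality at the end, since $A$ contains no unknowns. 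This is a cleaner bookkeeping of the same estimate: your route trades the paper's two-pass structure (forward Gronwall, then re-derivation plus $\epsilon$-absorption) for one extra divergence-theorem computation on the truncated regions, and it makes transparent that the $e^T$ factor comes solely from propagating the final-time energy backward; both arguments yield the same final inequality with a constant independent of $T$.
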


\begin{proof} 
	
	Integrating the identity
	\begin{equation} \label{eq:rts-LS2020}
	2 \Re \{\overline{v}_t \square v \}
	= \Re \divr_{x,t} \big( -2 \overline{v}_t \nabla_x v, |v_t|^2 + |\nabla_x v|^2 \big)
	\end{equation}
	over $\widetilde Q_\tau$ when $\tau \geq 1$ and noticing that $\square v = G$ in $\widetilde Q_\tau$, we obtain that
	\begin{equation}\label{eq:0g0-LS2020}
	\begin{aligned}
	\Re \int_{\widetilde Q_\tau} 2 \overline{v}_t G
	&= \int_{\widetilde \Gamma_\tau} (|v_t|^2 + |\nabla_x v|^2) \dif S- 2 \Re \int_{\widetilde \Sigma_\tau}  \overline{v}_t v_{\nu} \dif S\\
	& \qquad  - \frac 1 {\sqrt 2} \int_\Gamma (|\nabla_y v|^2 + |v_z + v_t|^2) \dif S,
	\end{aligned}
	\end{equation}
	Hence, \eqref{eq:0g0-LS2020} becomes
	\begin{equation} \label{eq:0g02-LS2020}
		\Re \int_{\widetilde Q_\tau} 2 \overline{v}_t G
		= \int_{\widetilde \Gamma_\tau} (|v_t|^2 + |\nabla_x v|^2) \dif S - \frac 1 {\sqrt 2} \nrm[\mathcal H]{v}^2 - 2 \Re \int_{\widetilde \Sigma_\tau}  \overline{v}_t v_{\nu} \dif S.
	\end{equation}

	We set
	\begin{equation} \label{eq:0gh-LS2020}
		e(\tau) := \int_{\widetilde \Gamma_\tau} (|\nabla_x v|^2 + |v_t|^2) \dif S.
	\end{equation}
	From \eqref{eq:0g02-LS2020} we deduce
	\begin{align}
		e(\tau)
		& \leq \int_0^\tau e(s) \dif s + \int_{\widetilde Q_\tau} |G|^2 + \frac 1 {\sqrt 2} \nrm[\mathcal H]{v}^2 + 2 \int_{\widetilde \Sigma_\tau}  |v_t v_{\nu}| \dif S, \nonumber
	\end{align}
	so the Gronwall's inequality gives
	\begin{equation} \label{eq:0g12-LS2020}
		e(\tau)
		\leq e^\tau (\frac 1 {\sqrt 2} \nrm[\mathcal H]{v}^2 + \int_{\widetilde Q_\tau} |G|^2 + 2 \int_{\widetilde \Sigma_\tau}  |v_t v_{\nu}| \dif S).
	\end{equation}
	
	From \eqref{eq:0g02-LS2020} we also obtain
	\begin{align*}
		& \ \frac 1 {\sqrt 2} \nrm[\mathcal H]{v}^2
		= \int_{\widetilde \Gamma_\tau} (v_t^2 + |\nabla_x v|^2) \dif S - 2 \Re \int_{\widetilde \Sigma_\tau}  \overline{v}_t v_{\nu} \dif S - \Re \int_{\widetilde Q_\tau} 2 \overline{v}_t G \nonumber \\
		\leq & \ e(\tau) + 2[1 + \epsilon (e^\tau - 1)] \int_{\widetilde \Sigma_\tau}  |v_t v_{\nu}| \dif S + [\frac 1 \epsilon + \epsilon (e^\tau - 1)] \int_{\widetilde Q_\tau} |G|^2 + \epsilon (e^\tau - 1) \frac 1 {\sqrt 2} \nrm[\mathcal H]{v}^2.
	\end{align*}
	By setting $\epsilon = [2(e^\tau - 1)]^{-1}>0$ and absorbing $\nrm[\mathcal H]{v}^2$ on the right hand side, we obtain
	\begin{equation} \label{eq:0g03-LS2020}
		\frac 1 {2 \sqrt 2} \nrm[\mathcal H]{v}^2
		\leq e(\tau) + 3 \int_{\widetilde \Sigma_\tau} |v_t v_{\nu}| \dif S + 4e^\tau \int_{\widetilde Q_\tau} |G|^2.
	\end{equation}
	Now setting $\tau = T$ and noting that $v(\cdot,T) = \phi_0$ and $v_t(\cdot,T) = \phi_1$, \eqref{eq:0g03-LS2020} gives
	\begin{equation*}
		\nrm[\mathcal H]{v}^2
		\leq C(\nrm[L^2(\Omega)]{\nabla \phi_0}^2 + \nrm[L^2(\Omega)]{\phi_1}^2 + \nrm[L^2(\Sigma)]{v_t} \nrm[L^2(\Sigma)]{v_\nu} + e^T \nrm[L^2(Q)]{G}^2),
	\end{equation*}
	for some constant $C$ independent of $T$.
	We arrive at the conclusion.
\end{proof}

The norm $\nrm[L^2(\Sigma)]{v_\nu}$ can also be estimated.
Following \cite[Lemma 3.3]{RakeshSalo1}, we examine the quantity $(x \cdot \nabla v) \square v$ and use integration by parts.

\begin{Lemma} \label{lem:tdlv-LS2020}
	Assume $v \in C^2(\ol{Q})$ solves \eqref{eq:vL-LS2020}
	For $\tau \geq 1$, we have
	\begin{equation*}
		\nrm[L^2(\widetilde \Sigma_\tau)]{v_\nu}^2
		\leq C ne^\tau (\nrm[L^2(\widetilde Q_\tau)]{G}^2 + \nrm[\mathcal H]{v}^2 + ne^{\tau} \nrm[H^1(\widetilde \Sigma_\tau)]{v}^2),
	\end{equation*}
	for some constant $C$ independent of $v$, $G$, $\tau$ and the dimension $n$.
\end{Lemma}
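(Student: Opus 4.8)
The plan is to exploit a Rellich--Pohozaev type identity generated by the radial multiplier $x \cdot \nabla_x v$, as suggested by the reference to \cite[Lemma 3.3]{RakeshSalo1}. Working with real $v$ (the complex case is identical after taking real parts and replacing squares by moduli), a direct computation with $\square = \partial_t^2 - \Delta$ yields the space-time divergence identity
\[
(x \cdot \nabla_x v)\, \square v = \divr_{x,t}\mathbf P + \tfrac n2 |v_t|^2 + \big(1-\tfrac n2\big)|\nabla_x v|^2, \qquad \mathbf P = \big( -(x\cdot\nabla_x v)\nabla_x v + \tfrac12 x(|\nabla_x v|^2 - |v_t|^2),\ (x\cdot\nabla_x v) v_t\big).
\]
I would integrate this over $\widetilde Q_\tau$ and apply the divergence theorem, whose boundary $\partial\widetilde Q_\tau = \widetilde\Gamma_\tau\cup\widetilde\Sigma_\tau\cup\Gamma$ splits into the horizontal top slice, the lateral cylinder, and the characteristic base. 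The payoff is that on $\widetilde\Sigma_\tau$ the outer unit normal is $\nu = x$ with $|x| = 1$, so that $x\cdot\nabla_x v = v_\nu$ and the flux of $\mathbf P$ there equals $-\tfrac12 |v_\nu|^2 + \tfrac12(|\nabla_T v|^2 - |v_t|^2)$, where $\nabla_T$ denotes the tangential gradient along $\partial\Omega$. This isolates $\tfrac12\int_{\widetilde\Sigma_\tau}|v_\nu|^2\dif S$ with a favourable sign; moving it to the left, it remains to bound the fluxes on the two other faces, the bulk term $\int_{\widetilde Q_\tau}[\tfrac n2|v_t|^2 + (1-\tfrac n2)|\nabla_x v|^2]$, the source term $\int_{\widetilde Q_\tau}(x\cdot\nabla_x v)G$, and the tangential contribution $\tfrac12\int_{\widetilde\Sigma_\tau}(|\nabla_T v|^2 - |v_t|^2)$, the last of which is directly controlled by $\nrm[H^1(\widetilde\Sigma_\tau)]{v}^2$.

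The crucial point is the flux on the characteristic face $\Gamma = \{t = z\}$, whose outer unit normal is $\tfrac1{\sqrt2}(0,1,-1)$ in $(y,z,t)$ coordinates. A priori the flux involves $v_z$ and $v_t$ separately, which are transversal to $\Gamma$ and hence not controlled by the tangential seminorm $\nrm[\mathcal H]{\cdot}$. However, writing $x\cdot\nabla_x v = y\cdot\nabla_y v + z v_z$ and $|\nabla_x v|^2 = |\nabla_y v|^2 + v_z^2$ and expanding, the transversal contributions collapse into a perfect square: a short computation gives
\[
\mathbf P\cdot N\big|_\Gamma = \tfrac1{\sqrt2}\Big( -(y\cdot\nabla_y v)(v_z + v_t) - \tfrac z2 |v_z + v_t|^2 + \tfrac z2 |\nabla_y v|^2 \Big),
\]
so only the genuinely tangential quantities $\nabla_y v$ and $v_z + v_t$ survive. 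Since $|y|,|z|\le 1$ on $\Omega$, this face contributes at most $C\nrm[\mathcal H]{v}^2$ by the definition \eqref{eq:NN0-LS2020}. The top flux on $\widetilde\Gamma_\tau$ is $(x\cdot\nabla_x v)v_t$, bounded by $\tfrac12 e(\tau)$ with $e(\tau)$ as in \eqref{eq:0gh-LS2020}, while the bulk term is dominated by $\tfrac n2\int_{\widetilde Q_\tau}(|v_t|^2 + |\nabla_x v|^2) = \tfrac n2\int_{-1}^\tau e(s)\dif s$, using $|1-\tfrac n2|\le\tfrac n2$ for $n\ge 2$.

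It then remains to feed in the energy estimate \eqref{eq:0g12-LS2020}, which bounds both $e(\tau)$ and $\int_{-1}^\tau e(s)\dif s$ (via $\int_{-1}^\tau e^s\dif s\le e^\tau$) by $e^\tau$ times $\tfrac1{\sqrt2}\nrm[\mathcal H]{v}^2 + \nrm[L^2(\widetilde Q_\tau)]{G}^2 + 2\int_{\widetilde\Sigma_\tau}|v_t v_\nu|\dif S$. This reproduces the $\nrm[\mathcal H]{v}^2$ and $\nrm[L^2(\widetilde Q_\tau)]{G}^2$ terms with the correct factor $ne^\tau$, but also reintroduces the coupling term $\int_{\widetilde\Sigma_\tau}|v_t v_\nu|\dif S$ containing the very quantity $v_\nu$ we are estimating. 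I would close the argument with Young's inequality $\int|v_t v_\nu|\le\tfrac\delta2\int|v_\nu|^2 + \tfrac1{2\delta}\int|v_t|^2$, choosing $\delta\sim (ne^\tau)^{-1}$ so that the resulting $\int_{\widetilde\Sigma_\tau}|v_\nu|^2$ contribution, which enters with coefficient $\sim ne^\tau\delta$, is absorbed into the left-hand side $\tfrac12\int_{\widetilde\Sigma_\tau}|v_\nu|^2$. The leftover $\tfrac1{2\delta}\int|v_t|^2$ then carries the factor $n^2 e^{2\tau}$ and is controlled by $n^2 e^{2\tau}\nrm[H^1(\widetilde\Sigma_\tau)]{v}^2$, exactly matching the stated bound. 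I expect the main obstacle to be precisely this bookkeeping of the $n$- and $e^\tau$-dependent constants through the coupled Rellich and energy estimates, in particular performing the absorption so that the final $H^1(\widetilde\Sigma_\tau)$ term acquires the factor $ne^\tau\cdot ne^\tau$, rather than the algebraic identities themselves, whose only nontrivial step is the cancellation of the transversal derivatives on the characteristic face $\Gamma$.
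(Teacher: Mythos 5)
Your proposal is correct and follows essentially the same route as the paper: the same Rellich--Pohozaev identity with multiplier $x\cdot\nabla_x v$ integrated over $\widetilde Q_\tau$, the same favourable sign of the $|v_\nu|^2$ flux on $\widetilde\Sigma_\tau$ (the paper phrases the tangential part via the rotational fields $\Omega_{ij}$ where you use $\nabla_T$), the same observation that on the characteristic face $\Gamma$ only the tangential quantities $\nabla_y v$ and $v_z+v_t$ survive and are controlled by $\nrm[\mathcal H]{v}$, and the same closure by feeding in the Gronwall bound \eqref{eq:0g12-LS2020} and absorbing the reintroduced $\int_{\widetilde\Sigma_\tau}|v_t v_\nu|$ term via Young's inequality with $\epsilon\sim(ne^\tau)^{-1}$, which is exactly how the $n^2e^{2\tau}\nrm[H^1(\widetilde\Sigma_\tau)]{v}^2$ term arises in the paper. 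Your explicit computation of the flux $\mathbf P\cdot N|_\Gamma$ is a correct and slightly more transparent rendering of the term the paper writes as $\frac1{\sqrt2}\int_\Gamma[z|\nabla(v)|^2-2\Re\{x\cdot\nabla(\overline v)\partial_z(v)\}]$.
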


\begin{proof}
	Integrating the identity
	{\small \[
		2 \Re (x \cdot \nabla \overline v) \square v = \Re \divr_{x,t} \big( x(|\nabla v|^2 - |v_t|^2) - 2(x\cdot \nabla \overline v) \nabla v, 2 (x\cdot \nabla \overline v) v_t \big) + n |v_t|^2 - (n-2)|\nabla v|^2
		\]}
	over $\widetilde Q_\tau$ and noticing that $\square v = G$ in $\widetilde Q_\tau$, similar to \eqref{eq:0g0-LS2020} we now have
	\begin{align}
		2 \int_{\widetilde \Sigma_\tau} |v_\nu|^2
		& = \frac 1 {\sqrt 2} \int_\Gamma [z|\nabla \big( v \big)|^2 - 2\Re \{x\cdot \nabla \big( \overline v \big) \partial_z \big( v \big) \} ] + \Re \int_{\widetilde \Gamma_\tau} 2(x\cdot \nabla v) v_t \nonumber \\
		& \quad - 2\Re \int_{\widetilde Q_\tau} (x \cdot \nabla \overline v) G + \int_{\widetilde \Sigma_\tau} (|v_\nu|^2 + \frac 1 2 \sum_{i \neq j}|\Omega_{ij} v|^2 - |v_t|^2) \nonumber \\
		& \quad + \int_{\widetilde Q_\tau} (n|v_t|^2 - (n-2)|\nabla v|^2), \label{eq:ig0-LS2020}
	\end{align}
	where we used (see \cite[(1.19)]{RakeshSalo1}) the fact
	\[
	|\nabla \varphi(x)|^2 = |\varphi_\nu(x)|^2 + \frac 1 2 \sum_{i \neq j}|\Omega_{ij} \varphi(x)|^2, \quad \text{where} \quad |x| = 1, \ \Omega_{ij} = x_i \partial_j - x_j \partial_i.
	\]
	By moving the $v_\nu$-term on the RHS of \eqref{eq:ig0-LS2020} to the left, we further obtain
	\begin{align}
		\int_{\widetilde \Sigma_\tau} |v_\nu|^2
		& \lesssim
		\nrm[\mathcal H]{v}^2 + e(\tau) + \nrm[H^1(\widetilde \Sigma_\tau)]{v}^2 + n \int_0^\tau e(s) \dif s + n \int_{\widetilde Q_\tau} |G|^2, \label{eq:ig1-LS2020}
	\end{align}
	where $e(\tau)$ is defined in \eqref{eq:0gh-LS2020}.
	Combining \eqref{eq:ig1-LS2020} and \eqref{eq:0g12-LS2020}, we have
	\begin{align*}
		\int_{\widetilde \Sigma_\tau} |v_\nu|^2
		& \lesssim (n+1) e^\tau (\nrm[\mathcal H]{v}^2 + \int_{\widetilde Q_\tau} |G|^2 + \frac 1 \epsilon \nrm[\widetilde \Sigma_\tau]{v_t}^2 + \epsilon \nrm[\widetilde \Sigma_\tau]{v_\nu}^2) + \nrm[\mathcal H]{v}^2 + \nrm[H^1(\widetilde \Sigma_\tau)]{v}^2.
	\end{align*}
	Setting $\epsilon = [2C(n+1)e^\tau]^{-1}$ for some constant $C$ big enough, we have
	\begin{equation*}
		\int_{\widetilde \Sigma_\tau} |v_\nu|^2
		\lesssim (n+1) e^\tau \int_{\widetilde Q_\tau} |G|^2 + ne^\tau \nrm[\mathcal H]{v}^2 + n^2 e^{2\tau} \nrm[H^1(\widetilde \Sigma_\tau)]{v}^2.
	\end{equation*}
	We arrive at the conclusion.
\end{proof}

Now we are ready to prove Proposition \ref{prop:Bdd2-LS2020}.

\begin{proof}[Proof of Proposition \ref{prop:Bdd2-LS2020}]
	Combining Lemma \ref{lem:Bdd-LS2020} and Lemma \ref{lem:tdlv-LS2020}, we get
	\begin{align*}
		\nrm[\mathcal H]{v}^2
		& \leq C(\nrm[L^2(\Omega)]{\nabla \phi_0}^2 + \nrm[L^2(\Omega)]{\phi_1}^2) + \epsilon C ne^T \nrm[L^2(Q)]{G}^2 + (\frac C {4\epsilon} + \epsilon C n^2e^{2T}) \nrm[H^1(\Sigma)]{v}^2 \nonumber \\
		& \quad + \epsilon C n e^{T} \nrm[\mathcal H]{v}^2.
	\end{align*}
	By setting $\epsilon = (2C ne^{T})^{-1}$ and absorbing the $\nrm[\mathcal H]{v}^2$ term on the RHS by the LHS, we finally arrive at \eqref{eq:vnf-LS2020}.
	The proof is complete.
\end{proof}

\subsection{The time reversal method} 

Recall the notation from \eqref{eq:nost-LS2020}. Given the data $\{u |_{\Sigma}\}$, we aim to construct an approximation of $F$ in \eqref{eq:nu1z-LS2020} by using a modified time-reversal method as in \cite{SU09}.
We define a function $v$ as the solution of the following system:
\begin{equation} \label{eq:vL0-LS2020}
\left\{\begin{aligned}
(\partial_t^2 -\Delta) v & = 0 && \text{in}\; Q, \\ 
v & = \tilde u && \text{on } \Sigma \cup \Sigma_-, \\
v = \phi_0, \ v_t & = 0 && \text{in}\; \Gamma_T, \\
\Delta \phi_0 & = 0 && \text{~in~} \Omega, \quad \phi_0 = u(\cdot,T) \text{~on~} \partial \Omega.
\end{aligned}\right.
\end{equation}
Here $\tilde{u}$ is a suitable extension of $u$ from $\Sigma$ to $\Sigma_-$ given in Lemma \ref{lem:vEx-LS2020} below. The choice of this extension will have no influence on the analysis in the rest of the paper.
However, as mentioned earlier, it is not obvious that $v|_\Gamma$ is in $H^1(\Gamma)$ if $u \in H^1(\Sigma)$.
Hence, we shall define
\begin{equation*}
	v|_\Gamma := \lim_{\epsilon \to 0^+} v_\epsilon |_\Gamma
\end{equation*}
where $v_\epsilon$ solves
\begin{equation} \label{eq:vLe-LS2020}
	\left\{\begin{aligned}
		(\partial_t^2 -\Delta) v_\epsilon & = 0 && \text{in}\; Q, \\ 
		v_\epsilon & = u_\epsilon && \text{on } \Sigma \cup \Sigma_-, \\
		v_\epsilon = \phi_0, \ \partial_t v_\epsilon & = 0 && \text{in}\; \Gamma_T, \\
		\Delta \phi_0 & = 0 && \text{~in~} \Omega, \quad \phi_0 = u_\epsilon(\cdot,T) \text{~on~} \partial \Omega.
	\end{aligned}\right.
\end{equation}
Here $u_\epsilon$ is a modification  of $u$ on the lateral boundary given in \eqref{eq:uep-LS2020} such that the compatibility requirements on $\partial \Gamma_T$ required by the existence of a smooth solution $v_{\eps}$ (when $u|_{\Sigma}$ is smooth) are satisfied.
Then we know that $v_\epsilon \in C^2(\overline Q)$ by \cite[Remark 2.10]{llt1986non},
and hence Proposition \ref{prop:Bdd2-LS2020} can be applied to $v_\epsilon$ and the limit exists due to the  estimate given in Proposition \ref{prop:Bdd2-LS2020}.
For convenience we reproduce \cite[Remark 2.10]{llt1986non} in the next lemma.

\begin{Lemma} \label{lem:LLT210-LS2020}
	Let $\Phi$ be a solution of the system
	\begin{equation*}
		\left\{\begin{aligned}
		(\partial_t^2 -\Delta) \Phi & = F && \text{in}\; \Omega \times [0,T], \\ 
		\Phi & = g && \text{on } \Xi := \partial \Omega \times [0,T], \\
		\Phi = \Phi_0, \ \Phi_t & = \Phi_1 && \text{in}\; \Omega \times \{t=0\},
		\end{aligned}\right.
	\end{equation*}
	with $(F,g, \Phi_0, \Phi_1)$ satisfying the regularity assumptions ($m$ is a non-negative integer)
	\begin{equation*}
		\left\{\begin{aligned}
		& F \in L^1(0,T; H^m(\Omega)), \ \frac {\df^m F} {\df t^m} \in L^1(0,T; L^2(\Omega)), \\
		& \Phi_0 \in H^{m+1}(\Omega), \ \Phi_1 \in H^{m}(\Omega), \\
		& g \in H^{m+1}(\Xi) := L^2(0,T; H^{m+1}(\Xi)) \cap H^{m+1}(0,T; L^2(\Xi))
		\end{aligned}\right.
	\end{equation*}
	and satisfying all necessary compatibility conditions up to order $m$.
	Then
	\begin{equation*}
	\Phi \in C([0,T]; H^{m+1}(\Omega)), \ 
	\frac {\df^{(m+1)} \Phi} {\df t^{(m+1)}} \in C([0,T]; L^2(\Omega)), \ \text{and} \
	\frac {\partial \Phi} {\partial \nu} \in H^m(\Xi).
	\end{equation*}
\end{Lemma}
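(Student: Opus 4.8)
The plan is to follow the standard theory of nonhomogeneous hyperbolic boundary value problems, and the first step is to reduce to the case of homogeneous Dirichlet data. Using trace and extension theory for anisotropic Sobolev spaces, I would choose a lifting $R$ of $g$ into the cylinder $\Omega \times [0,T]$ with $R|_{\Xi} = g$ and carrying at least the regularity demanded of $\Phi$ in the conclusion. Setting $w := \Phi - R$, the function $w$ solves $\square w = F - \square R$ with homogeneous boundary condition $w|_{\Xi} = 0$ and modified initial data $(\Phi_0 - R(\cdot,0), \Phi_1 - R_t(\cdot,0))$. The compatibility conditions imposed on $(F,g,\Phi_0,\Phi_1)$ up to order $m$ translate precisely into the statement that the source and initial data for $w$ are compatible with the homogeneous boundary condition at the corner $\partial\Omega \times \{0\}$, which is what prevents the appearance of corner singularities.

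For the interior regularity of $w$, I would use the group generated by the wave operator with homogeneous Dirichlet conditions. Writing the problem as a first-order system $\partial_t U = \mathcal{A} U + (0, F - \square R)$ with $U = (w, w_t)$, the operator $\mathcal{A}$ generates a $C_0$-group on the energy space $H_0^1(\Omega) \times L^2(\Omega)$, and higher regularity is encoded by the domains of its powers, $D(\mathcal{A}^{m+1})$ being comparable to the intersection of $(H^{m+2} \cap H_0^1)(\Omega) \times (H^{m+1} \cap H_0^1)(\Omega)$ with the compatibility relations. The compatibility assumptions guarantee that the data lie in the correct domain, and Duhamel's formula together with the assumed summability of $F$ and $\partial_t^m F$ yields $w \in C([0,T]; H^{m+1}(\Omega))$ and $\partial_t^{m+1} w \in C([0,T]; L^2(\Omega))$. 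Adding back $R$ recovers the first two claimed regularities for $\Phi$.

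The essential difficulty, and the one part that does not follow from abstract semigroup theory, is the Neumann trace regularity $\partial_\nu \Phi \in H^m(\Xi)$. This is a \emph{hidden} (trace) regularity statement: elliptic trace theory applied to $\Phi \in C([0,T]; H^{m+1}(\Omega))$ would only give $\partial_\nu \Phi \in L^2(0,T; H^{m-1/2}(\partial\Omega))$, and the hyperbolic structure produces an extra half derivative in the normal direction that no spatial trace theorem detects. To capture it I would use a multiplier (Rellich) identity in the spirit of the $(x\cdot\nabla v)\square v$ computation already exploited in Lemma \ref{lem:tdlv-LS2020}: multiply $\square w$ by $h \cdot \nabla w$, where $h$ is a smooth vector field on $\overline\Omega$ with $h = \nu$ on $\partial\Omega$, and integrate by parts over $\Omega \times [0,T]$. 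The boundary term on $\Xi$ reproduces $\| \partial_\nu w \|_{L^2(\Xi)}^2$, while all remaining terms are controlled by the energy and the data already bounded in the previous step, which settles the base case $m=0$.

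To reach the full $H^m(\Xi)$ bound I would differentiate the equation in the tangential and time directions, which commute with the homogeneous boundary condition, apply the same multiplier estimate to the differentiated equations, and sum the resulting inequalities; here the compatibility conditions are exactly what is needed to justify the differentiations up to order $m$ and to absorb the commutator terms generated by $h$ and by the curvature of $\partial\Omega$. Iterating in this manner and transferring the estimates from $w$ back to $\Phi = w + R$ completes the argument. The main obstacle throughout is this trace regularity step, where the gain of the half derivative forces the multiplier method rather than any off-the-shelf trace inequality.
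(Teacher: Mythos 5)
The first thing to note is that the paper offers no proof of this statement: the lemma is quoted verbatim from \cite[Remark 2.10]{llt1986non} (the text preceding it says ``for convenience we reproduce [LLT, Remark 2.10] in the next lemma''), so the relevant comparison is with the Lasiecka--Lions--Triggiani argument that the citation points to.

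Measured against that argument, your proposal contains a genuine gap in its very first step: the reduction to homogeneous Dirichlet data by a lifting $R$ cannot deliver the stated regularity. Take $m=0$, so $g \in H^1(\Xi) = L^2(0,T;H^1(\partial\Omega)) \cap H^1(0,T;L^2(\partial\Omega))$. Any lifting built from trace/extension theory (say, the time-slice harmonic extension, for which $\square R = \partial_t^2 R$) produces a source involving \emph{two} time derivatives of $g$, while $g$ possesses only one in $L^2$; thus $\square R \notin L^1(0,T;L^2(\Omega))$ and the Duhamel step cannot even be started. The same bookkeeping at level $m$ shows the lifted source is short by one time derivative (you need $\partial_t^{m+2}g$ but only have $\partial_t^{m+1}g \in L^2$) and by half a spatial derivative (you need $\partial_t^2 g \in L^1(0,T;H^{m-1/2}(\partial\Omega))$ but the anisotropic space yields at best $L^2(0,T;H^{m-1}(\partial\Omega))$). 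This is not a repairable technicality: the conclusion $\Phi \in C([0,T];H^{m+1}(\Omega))$ from $g$ of spatial regularity $H^{m+1}(\partial\Omega)$ is the ``sharp hyperbolic regularity'' of Lasiecka--Lions--Triggiani, half a derivative better than any elliptic-type lifting can see, and their proof obtains the interior regularity by \emph{transposition}: the solution of the inhomogeneous-boundary problem is defined and estimated by duality against the backward homogeneous Dirichlet problem, the key estimate being exactly the hidden Neumann-trace regularity $\partial_\nu \psi \in L^2(\Xi)$ of finite-energy solutions $\psi$, proved by the multiplier $h\cdot\nabla\psi$ with $h=\nu$ on $\partial\Omega$. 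Higher $m$ then follows by differentiating the boundary data in time and bootstrapping spatial regularity through the equation and elliptic theory, and the trace claim $\partial_\nu\Phi \in H^m(\Xi)$ by the multiplier identity applied to tangential/time derivatives. So your diagnosis of the multiplier identity as the source of the extra half derivative is correct, but it must be deployed twice --- once, via duality, to obtain the interior regularity that your lifting-plus-semigroup step cannot reach, and once for the Neumann trace --- rather than only for the trace as in your outline.
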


Note that in contrast with $u$ which is defined in the infinite half plane $\{(y,z,t) \,;\, t \geq z \}$, $v$ is only defined in the finite cylinder $\Omega \times [-1,T]$.

\begin{Lemma} \label{lem:vEx-LS2020}
	If $u|_{\Sigma} \in C^K(\Sigma)$ where $K = \lceil n/2 \rceil + 2$, then there exists a unique solution $v \in C([-1,T]; H^1(\Omega))$ of the system \eqref{eq:vL0-LS2020}, and $v|_{\Gamma} \in H^1(\Gamma)$.
\end{Lemma}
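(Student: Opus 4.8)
The plan is to realize $v$ as the limit of the smooth approximants $v_\epsilon$ solving \eqref{eq:vLe-LS2020}, the uniform control coming from Proposition \ref{prop:Bdd2-LS2020} together with the classical hyperbolic energy estimate. First I would fix the data: extend $u|_\Sigma$ to a $C^K$ function $\tilde u$ on $\Sigma \cup \Sigma_-$ across $\Gamma$, which is the $\tilde u$ appearing in \eqref{eq:vL0-LS2020}. Since $u|_\Sigma \in C^K(\Sigma)$, the trace $u(\cdot,T)$ lies in $C^K(\partial\Omega)$, so elliptic regularity gives $\phi_0 \in H^K(\Omega)$ for the harmonic extension, and in particular $\nabla\phi_0 \in L^2(\Omega)$. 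Reversing time by $t \mapsto T-t$ turns \eqref{eq:vL0-LS2020} into a forward initial-boundary value problem on $\Omega\times[0,T+1]$ with initial data $(\phi_0,0)$ and lateral Dirichlet data inherited from $\tilde u$, placing us in the framework of Lemma \ref{lem:LLT210-LS2020}.

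The approximations $u_\epsilon$ of the boundary data (see \eqref{eq:uep-LS2020}) are built so as to satisfy the compatibility conditions at the corner $\partial\Gamma_T$ up to an order $m$ fixed large enough, which is exactly the role of the choice $K = \lceil n/2\rceil+2$, while $u_\epsilon \to u$ in $H^1(\Sigma)$ as $\epsilon\to 0^+$; correspondingly $\phi_0^\epsilon \to \phi_0$ in $H^1(\Omega)$ for the harmonic extensions of $u_\epsilon(\cdot,T)$. Lemma \ref{lem:LLT210-LS2020} then yields $v_\epsilon \in C^2(\overline{Q})$, so that Proposition \ref{prop:Bdd2-LS2020} may be applied to each $v_\epsilon$ with $G=0$ and $\phi_1=0$. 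This gives $\nrm[\mathcal H]{v_\epsilon} \le C(\nrm[L^2(\Omega)]{\nabla\phi_0^\epsilon} + \sqrt n\,e^{T/2}\nrm[H^1(\Sigma)]{u_\epsilon})$, while multiplying $\square v_\epsilon = 0$ by $\overline{(v_\epsilon)_t}$ and invoking Gronwall bounds $v_\epsilon$ in $C([-1,T];H^1(\Omega))\cap C^1([-1,T];L^2(\Omega))$ in terms of $\nrm[H^1(\Omega)]{\phi_0^\epsilon}$ and $\nrm[H^1(\Sigma\cup\Sigma_-)]{u_\epsilon}$. Since $u_\epsilon\to u$ and $\phi_0^\epsilon\to\phi_0$, both right-hand sides stay bounded, so $\{v_\epsilon\}$ is uniformly bounded in the energy space and $\{v_\epsilon|_\Gamma\}$ in the $\mathcal H$-seminorm.

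It remains to pass to the limit. Along a subsequence $v_\epsilon \rightharpoonup v$ weak-$*$ in $L^\infty([-1,T];H^1(\Omega))$ with $\partial_t v_\epsilon \rightharpoonup \partial_t v$ weak-$*$ in $L^\infty([-1,T];L^2(\Omega))$; the limit solves \eqref{eq:vL0-LS2020}, since the equation and the final and lateral data pass to the limit, and a standard argument upgrades this to $v \in C([-1,T];H^1(\Omega))$. For the trace, set $\psi(y,z):=v(y,z,z)$ and note $\nabla_y\psi = (\nabla_y v)|_\Gamma$ and $\partial_z\psi = (v_z+v_t)|_\Gamma$, so the seminorm in \eqref{eq:NN0-LS2020} is exactly the homogeneous $H^1$-seminorm of $\psi$ over $\Omega$, the Jacobian $\sqrt 2$ of the $45^\circ$ surface accounting for the normalization; combined with the $L^2(\Gamma)$-bound obtained by writing $v_\epsilon(y,z,z)=\phi_0^\epsilon(y,z)-\int_z^T(v_\epsilon)_t\,dt$ and using the energy bound, this shows $\{v_\epsilon|_\Gamma\}$ is bounded in $H^1(\Gamma)$, whence $v|_\Gamma\in H^1(\Gamma)$. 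Uniqueness is the backward energy estimate: the difference $w$ of two solutions has vanishing final and lateral data, so $\tfrac12\int_\Omega(|w_t|^2+|\nabla w|^2)\,dx$ is constant in $t$ and vanishes at $t=T$, forcing $w\equiv 0$.

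The main obstacle is precisely the trace on the characteristic surface $\Gamma=\{t=z\}$: being characteristic for $\square$, it is reached neither by the standard trace theorem nor by interior elliptic regularity, and a priori one would only expect $v|_\Gamma\in H^{1/2}$. The gain to $H^1(\Gamma)$ is a hyperbolic effect, quantified by the tilted-domain energy identity behind Proposition \ref{prop:Bdd2-LS2020}. Consequently the real work of this lemma is organizational rather than conceptual: constructing compatible smooth approximants $v_\epsilon$ to which that a priori estimate applies, and ensuring the $\mathcal H$-control survives the passage to the limit.
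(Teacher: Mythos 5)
Your proposal is correct and follows essentially the same route as the paper: extend the data smoothly across $\Gamma$ to $\Sigma_-$, take the harmonic extension $\phi_0$ as final data, build time-mollified boundary data $u_\epsilon$ that satisfy the compatibility conditions of Lemma \ref{lem:LLT210-LS2020}, use that lemma plus the Sobolev embedding $H^K(\Omega)\subset C^2(\overline\Omega)$ (which, rather than the compatibility order, is the actual reason for the choice $K=\lceil n/2\rceil+2$) to get $v_\epsilon\in C^2(\overline Q)$, and then invoke Proposition \ref{prop:Bdd2-LS2020} to pass to the limit. One small refinement: since $\Gamma$ is characteristic the trace $v|_\Gamma$ has no a priori meaning, so it should be \emph{defined} as the strong $H^1(\Gamma)$ limit of $v_\epsilon|_\Gamma$ --- by linearity, Proposition \ref{prop:Bdd2-LS2020} applied to the differences $v_\epsilon-v_{\epsilon'}$ together with \eqref{eq:ue-LS2020} makes the sequence Cauchy, which is exactly how the paper proceeds --- rather than inferred from mere boundedness of $\{v_\epsilon|_\Gamma\}$ as in your last paragraph.
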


\begin{proof}
	We extend $u$ from $\Sigma$ to $\Sigma_-$ as follows.
	Inspired by the Taylor's expansion, 
	we define the following extension of $u$,
	\begin{equation*}
	\tilde u(y,z,t) :=
	\left\{\begin{aligned}
	& u(y,z,t), && \text{~on~} \Sigma, \\
	& \sum_{k = 0}^K (\partial_t^k u)(y,z,z) \cdot (t-z)^k / k!, && \text{~on~} \Sigma_-.
	\end{aligned}\right.
	\end{equation*}
	Then $\tilde u \in C^K(\Sigma \cup \Sigma_-) \subset H^K(\Sigma \cup \Sigma_-)$. By the trace theorem $\tilde u|_{\partial \Omega \times \{T\}}$ is in $H^{K-\frac 1 2}(\partial \Omega \times \{T\})$, which implies that the harmonic function $\phi_0$ in \eqref{eq:vL0-LS2020} is in $H^K(\Gamma_T)$.

	Now we construct a series of approximate Dirichlet boundary data $\{u_\epsilon\}_{\epsilon > 0}$ such that the compatibility conditions needed in Lemma \ref{lem:LLT210-LS2020} will be satisfied.
	To that end, we fix a cutoff function $\chi_{0} \in C_c^\infty(\R)$ satisfying $\chi_0(t) = 1$ when $|t| \leq 1$ and $\chi_0(t) = 0$ when $|t| \geq 2$,
	and we set
	\begin{equation} \label{eq:uep-LS2020}
		u_\epsilon(x,t)
		:= \chi_0(\frac {T-t} \epsilon) \tilde u(x,T) + (1 - \chi_0(\frac {T-t} \epsilon))\tilde u(x,t).
	\end{equation}
	We see $u_\epsilon = \tilde u = u$ on $\Gamma_T$.
	We denote the initial velocity of \eqref{eq:vLe-LS2020} as $\phi_1$, i.e.~$u_t = \phi_1 = 0$ in $\Gamma_T$.
	The compatibility conditions for Lemma \ref{lem:LLT210-LS2020} up to the order $K$ are the following (cf.~e.g.~\cite[\S 7.2 (62)]{evans2010pde}):
	\begin{equation} \label{eq:DPue-LS2020}
	(\Delta^k \phi_0, \Delta^k \phi_1) = (\partial_t^{2k} u_\epsilon, \partial_t^{2k+1} u_\epsilon) \text{~on~} \Gamma_T, \quad \forall k:  0 \leq k \leq \Big \lfloor \frac K 2 \Big \rfloor.
	\end{equation}
	It can be checked that \eqref{eq:DPue-LS2020} is true, and this is simply because $\phi_0 = u_\epsilon(\cdot,T)$, $\Delta \phi_0 = 0$, $\phi_1 = 0$ and $\partial_t^j u_\epsilon(\cdot, T) = 0$ for $\forall j \geq 1$.
	
	It is straightforward to check that
	\begin{equation} \label{eq:ue1-LS2020}
		\nrm[L^2(\Sigma)]{u - u_\epsilon} \to 0, \quad
		\nrm[L^2(\Sigma)]{\nabla|_{\Sigma,x}(u - u_\epsilon)} \to 0, \quad \text{as~} \epsilon \to 0^+,
	\end{equation}
	where $\nabla|_{\Sigma,x}$ stands for the spacial component of the gradient on the manifold $\Sigma$.
	We aim to make the sequence $\{u_\epsilon\}_{\epsilon > 0}$ converge to $u$ in $H^1(\Sigma)$.
	According to \eqref{eq:ue1-LS2020}, it is left to show $\nrm[L^2(\Sigma)]{\partial_t(u - u_\epsilon)} \to 0$ as $\epsilon \to 0^+$.
	One can compute
	\begin{align*}
		\partial_t (u_\epsilon - \tilde u)(x,t)
		& = \frac 1 \epsilon \chi_0'(\frac {T-t} \epsilon) [\tilde u(x,t) - \tilde u(x,T)] + (1 - \chi_0(\frac {T-t} \epsilon)) \tilde u_t(x,t) - \tilde u_t(x,t) \\
		& = \frac 1 \epsilon \chi_0'(\frac {T-t} \epsilon) [(t-T) \tilde u_t(x,T) + \mathcal O(|t-T|^2)] - \chi_0(\frac {T-t} \epsilon) \tilde u_t(x,t) \\
		& = -\frac {T-t} \epsilon \chi_0'(\frac {T-t} \epsilon) [\tilde u_t(x,T) + \mathcal O(|t-T|)] - \chi_0(\frac {T-t} \epsilon) \tilde u_t(x,t).
	\end{align*}
	Note that $\frac {T-t} \epsilon \chi_0'(\frac {T-t} \epsilon) \to 0$ point-wise in $[0,T]$, and $\chi_0(\frac {T-t} \epsilon) \to 0$ point-wise in $[0,T)$ as $\epsilon \to 0^+$, so $\partial_t (u_\epsilon - \tilde u)(x,t) \to 0$ almost everywhere on $\overline \Sigma$, and hence
	\begin{equation} \label{eq:ue2-LS2020}
		\nrm[L^2(\Sigma)]{\partial_t(\tilde u - u_\epsilon)} \to 0, \quad \text{as~} \epsilon \to 0^+.
	\end{equation}
	Combining \eqref{eq:ue1-LS2020} with \eqref{eq:ue2-LS2020}, we arrive at
	\begin{equation} \label{eq:ue-LS2020}
		\nrm[H^1(\Sigma)]{\tilde u - u_\epsilon} \to 0, \quad \text{as~} \epsilon \to 0^+.
	\end{equation}

	The smoothness of $\tilde u$ implies $u_\epsilon \in C^K(\Sigma \cup \Sigma_-) \subset H^K(\Sigma \cup \Sigma_-)$.
	For the system \eqref{eq:vLe-LS2020}, the prerequisites of Lemma \ref{lem:LLT210-LS2020} are all satisfied now, especially the compatibility requirements \eqref{eq:DPue-LS2020}, so we can conclude
	\[
	v_\epsilon \in C([-1,T]; H^K(\Omega)), \qquad \partial_t v_\epsilon \in C([-1,T]; H^{K-1}(\Omega)).
	\]
	By the Sobolev embedding theorem we know $H^K(\Omega) \subset C^2(\overline{\Omega})$ when $K - n/2 \geq 2$, so we set $K = \lceil n/2 \rceil + 2$, and thus
	\begin{equation} \label{eq:vEx1-LS2020}
		v_\epsilon \in C([-1,T]; C^2(\overline{\Omega})), \qquad \partial_t v_\epsilon \in C([-1,T]; C^1(\overline{\Omega})),
	\end{equation}
	so $\Delta v_\epsilon(\cdot,t) \in C(\ol{\Omega})$ for each $t$.
	Therefore, the equation $(\partial_t^2 -\Delta) v_\epsilon = 0$ implies
	\begin{equation} \label{eq:vEx2-LS2020}
		\partial_t^2 v_\epsilon(x,t) \text{~is continuous w.r.t.~} t.
	\end{equation}
	By \eqref{eq:vEx1-LS2020} and \eqref{eq:vEx2-LS2020} we obtain $v_\epsilon \in C^2(\overline Q)$, thus $v_\epsilon$ is well-defined on the slanted plane $\Gamma$ and $v_\epsilon |_\Gamma \in H^1(\Gamma)$.
	
	Finally, by Proposition \ref{prop:Bdd2-LS2020} and the convergence in \eqref{eq:ue-LS2020}, we see $\{ v_\epsilon |_\Gamma \}$ is a Cauchy sequence in $H^1(\Gamma)$,
	so we can define $v|_\Gamma$ as the limit of $v_\epsilon|_\Gamma$, i.e.,
	\[
	v|_\Gamma := \lim_{\epsilon \to 0^+} v_\epsilon |_\Gamma,
	\]
	and the estimate in Proposition \ref{prop:Bdd2-LS2020} implies $v|_\Gamma \in H^1(\Gamma)$.
	The proof is done.
\end{proof}

\begin{Remark} \label{rem:vEx-LS2020}
	According to \eqref{eq:nu1z-LS2020} and \eqref{eq:vLe-LS2020} we see that an $F \in H^{s_0}_{\ol{\Omega}}(\mathbb{R}^n)$ produces a $u$ and a $v_\epsilon$.
	By Proposition \ref{prop:sol_smooth_1} and Lemma \ref{lem:vEx-LS2020}, we can conclude there exists a large enough integer $s_0$ such that $u \in C^{\lceil n/2 \rceil + 2}$ and $v_\epsilon \in C^2$, respectively.
\end{Remark}

\subsection{The approximate inverse of \texorpdfstring{$A_0'$}{A0'}} \label{subsec:B-LS2020}

To introduce the approximate inverse $B$ of $A_0'$, we introduce some function spaces first.
Recall once more the notation given in \eqref{eq:nost-LS2020}.
We wish to consider functions $F$ in $\{ t = z \}$ that satisfy $ZF = 0$ outside $\Gamma$, since this is true in \eqref{eq:nu1z-LS2020-uF} when $\supp(f) \subset \ol{\Omega}$.
Moreover, for technical reasons we need the following weighted $H^1$-norm of $F$ on $\partial \Gamma$ to be finite:
\begin{equation} \label{eq:Weim-LS2020}
	\nrm[H_w^1(\partial \Gamma)]{F}
	:= \big( \int_{\partial \Gamma} \frac 1 {\sqrt{1-|y|^4}} |\nabla_{\partial \Gamma} F(y,z_y,z_y)|^2 \dif S \big)^{1/2}, \quad z_y := \sqrt{1-|y|^2}.
\end{equation}
Therefore, we consider functions $h$ on $\{ t = z \}$ that satisfy the conditions:
\begin{equation} \label{eq:zc-LS2020}
	\left\{\begin{aligned}
		& \supp(h) \subset \{ (y,z,z) \,;\, \text{``}\abs{(y,z)} \leq 1\text{''}, \text{ or } \text{``}z \geq 0 \text{ and } |y| \leq 1\text{''} \}, \\
		& Zh = 0 \text{~outside $\Gamma$}, \quad \text{and} \quad \nrm[H_w^1(\partial \Gamma)]{F} < +\infty.
	\end{aligned}\right.
\end{equation}
The integral requirement above is to make sure the integrals in \eqref{eq:ETX-LS2020}-\eqref{eq:ted1-LS2020} are finite.

We define function spaces
\begin{equation} \label{eq:zc2-LS2020}
\left\{\begin{aligned}
\mathcal H' & := \{ h \in H^1_{\mathrm{loc}}(\{t = z \}) \,;\, h \text{~satisfies~} \eqref{eq:zc-LS2020} \}, \\
\mathcal H & := \{ h \in \mathcal H' \,;\, \supp h \subset \ol{\Gamma} \},
\end{aligned}\right.
\end{equation}
Recall the seminorm $\nrm[\mathcal H]{h}$ defined in \eqref{eq:NN0-LS2020},
\begin{equation*}
	\nrm[\mathcal H]{h} = \big( \frac 1 {\sqrt 2} \int_\Gamma (|\nabla_y h|^2 + |h_z + h_t|^2) \dif S \big)^{\frac 1 2}.
\end{equation*}
Note that $Z = \p_z + \p_t$ is tangential to $\Gamma$, and hence $\nrm[\mathcal H]{h}$ corresponds to $\norm{\nabla_{\Gamma} h}_{L^2(\Gamma)}$.
Note also that by \eqref{eq:zc-LS2020} $\nrm[\mathcal H]{\,\cdot\,}$ is in fact a norm on $\mathcal H'$, so that $(\mathcal H', \nrm[\mathcal H]{\cdot})$ is a Banach space and $(\mathcal H, \nrm[\mathcal H]{\cdot})$ is a closed subspace.
Moreover, by Poincar\'e inequality, the norm $\norm{h}_{\mathcal H}$ is comparable to $\norm{h}_{H^1(\Gamma)}$ for $h \in \mathcal H$. We shall use these facts several times in the following computations.

By Lemma \ref{lem:vEx-LS2020} we can define an \emph{approximate inverse} $B$ of $A_0'$ as follows,
\begin{equation} \label{eq:B-LS2020}
	\left\{\begin{aligned}
	& B := \lim_{\epsilon \to 0^+} B_\epsilon, \ \text{where} \ B_\epsilon \colon C^K(\Sigma) \subset H^1(\Sigma) \to \mathcal H', \ \ u|_{\Sigma} \mapsto \tilde v_\epsilon, \\
	& \tilde v_\epsilon \text{~is the zero extension of~} v_\epsilon |_{\Gamma} \text{~to~} \{t = z\} \text{~where $v_\epsilon$ solves \eqref{eq:vLe-LS2020}}.
	\end{aligned}\right.
\end{equation}


From the proof of Lemma \ref{lem:vEx-LS2020} we see $v|_{\Gamma} := \lim_{\epsilon \to 0^+} v_\epsilon |_{\Gamma} \in H^1(\Gamma)$, and $\tilde v$ is the zero extension of $v$, so $\tilde v \in \mathcal H'$. 
Note that the restriction of $u$ on $\{t = z\}$ belongs to $\mathcal H'$, so a solution of \eqref{eq:vL0-LS2020} will always have an extension in $\mathcal H'$.
The map $B$ shall be understood intuitively as a parametrix of $A_0'$, and $B A_0' F$ as an approximation of $F$ for any $F \in \mathcal H$ but not  for $F \in \mathcal H'$.
We will prove below in Proposition \ref{prop:Bdd3-LS2020} that in fact $B: H^1(\Sigma) \to \mathcal H'$ is a bounded map.

\subsection{Boundedness of the approximate inverse} 

\begin{Proposition} \label{prop:Bdd3-LS2020}
	The map $B$ extends as a bounded operator from $H^1(\Sigma)$ to $\mathcal H'$, and for $T \geq 1$, we have
	\begin{equation*}
	\nrm[\mathcal H]{Bu}
	\leq C e^{T/2} \nrm[H^1(\Sigma)]{u}
	\end{equation*}
	for some constant $C$ independent of $u$ and $T$.
\end{Proposition}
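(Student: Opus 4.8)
The plan is to establish the bound first on the dense subspace $C^K(\Sigma) \subset H^1(\Sigma)$ on which each $B_\epsilon$ is defined, and then extend by density. The key input is the a priori estimate of Proposition \ref{prop:Bdd2-LS2020}, which applies to $v_\epsilon \in C^2(\ol{Q})$ (this regularity is guaranteed by Lemma \ref{lem:vEx-LS2020}) because $v_\epsilon$ solves \eqref{eq:vLe-LS2020}. For that system the source vanishes, $G = 0$, and the prescribed final velocity vanishes, $\phi_1 = 0$, so Proposition \ref{prop:Bdd2-LS2020} reduces at once to
\[
\nrm[\mathcal H]{v_\epsilon} \leq C\big( \nrm[L^2(\Omega)]{\nabla \phi_0} + \sqrt{n}\, e^{T/2} \nrm[H^1(\Sigma)]{v_\epsilon} \big).
\]
Since $v_\epsilon = u_\epsilon$ on $\Sigma$ by the Dirichlet condition, the last term equals $\sqrt{n}\, e^{T/2}\nrm[H^1(\Sigma)]{u_\epsilon}$.

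Next I would control the harmonic-extension term $\nrm[L^2(\Omega)]{\nabla \phi_0}$. Here $\phi_0$ is the harmonic function in $\Omega$ with $\phi_0 = u_\epsilon(\cdot, T)$ on $\partial \Omega$, so the standard elliptic estimate for the Poisson extension gives $\nrm[L^2(\Omega)]{\nabla \phi_0} \lesssim \nrm[H^{1/2}(\partial \Omega)]{u_\epsilon(\cdot, T)}$. Because $T > 1$, the top slice $\partial \Omega \times \{ T \}$ is a codimension-one submanifold of $\Sigma$, so the trace theorem yields $\nrm[H^{1/2}(\partial \Omega)]{u_\epsilon(\cdot, T)} \lesssim \nrm[H^1(\Sigma)]{u_\epsilon}$. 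Combining with the previous display and absorbing the fixed factor $\sqrt{n}$ into the constant (the dimension $n$ is fixed, and the target estimate allows $C$ to depend on $n$) produces the uniform bound
\[
\nrm[\mathcal H]{v_\epsilon} \leq C e^{T/2} \nrm[H^1(\Sigma)]{u_\epsilon},
\]
with $C$ independent of $\epsilon$, $u$ and $T$.

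To translate this into a bound on $B_\epsilon u = \tilde v_\epsilon$, I would observe that the seminorm $\nrm[\mathcal H]{\cdot}$ in \eqref{eq:NN0-LS2020} integrates only over $\Gamma$ and involves only the tangential derivatives $\nabla_y$ and $Z = \partial_z + \partial_t$. Since $\tilde v_\epsilon$ is the zero extension of $v_\epsilon|_\Gamma$, the zero extension does not affect the integral over $\Gamma$, whence $\nrm[\mathcal H]{B_\epsilon u} = \nrm[\mathcal H]{\tilde v_\epsilon} = \nrm[\mathcal H]{v_\epsilon}$. Finally I would let $\epsilon \to 0^+$: by \eqref{eq:ue-LS2020} we have $u_\epsilon \to \tilde u$ in $H^1(\Sigma)$ with $\tilde u|_\Sigma = u$, so $\nrm[H^1(\Sigma)]{u_\epsilon} \to \nrm[H^1(\Sigma)]{u}$, while Lemma \ref{lem:vEx-LS2020} guarantees $B_\epsilon u = \tilde v_\epsilon \to B u$ in $\mathcal H'$. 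Passing to the limit gives $\nrm[\mathcal H]{B u} \leq C e^{T/2}\nrm[H^1(\Sigma)]{u}$ for $u \in C^K(\Sigma)$. As $C^K(\Sigma)$ is dense in $H^1(\Sigma)$, $\mathcal H'$ is a Banach space, and $B$ is linear, the uniform bound lets $B$ extend to a bounded operator $H^1(\Sigma) \to \mathcal H'$ satisfying the same estimate.

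The heavy analytic lifting has already been carried out in Proposition \ref{prop:Bdd2-LS2020} (the energy estimate adapted to the slanted characteristic geometry), so I do not expect a single hard step here. Rather, the points requiring care are the uniformity in $\epsilon$ of every constant and the correct treatment of the final-time data term, namely verifying that $\nabla \phi_0$ is genuinely controlled by the lateral data $\nrm[H^1(\Sigma)]{u_\epsilon}$ through the harmonic extension together with the trace onto $\partial \Omega \times \{T\}$. The remaining steps — the identification of $\nrm[\mathcal H]{B_\epsilon u}$ with $\nrm[\mathcal H]{v_\epsilon}$, the limit passage, and the density extension — are routine bookkeeping.
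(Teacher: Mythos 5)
Your proposal is correct and follows essentially the same route as the paper's proof: both apply the a priori estimate of Proposition \ref{prop:Bdd2-LS2020} to $v_\epsilon$ with $G = 0$ and $\phi_1 = 0$, bound $\nrm[L^2(\Omega)]{\nabla \phi_0}$ via the standard elliptic estimate for the harmonic Dirichlet extension combined with the trace theorem on $\Sigma$, and then pass to the limit $\epsilon \to 0^+$ and conclude by density. The only cosmetic difference is that you spell out the identification $\nrm[\mathcal H]{B_\epsilon u} = \nrm[\mathcal H]{v_\epsilon}$ (the zero extension not affecting the integral over $\Gamma$) and the uniformity in $\epsilon$, which the paper leaves implicit.
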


\begin{Remark}
    In Proposition \ref{prop:Bdd3-LS2020} the norm of $Bu$ is calculated on the light-like hyperplane $\Gamma$.
    Readers may also note that \cite{KM91} gives a similar result on time-like hyperplanes by using Carleman estimates.
\end{Remark}

\begin{proof}[Proof of Proposition \ref{prop:Bdd3-LS2020}]
    By density it is enough to prove the estimate when $u|_{\Sigma}$ is smooth. The equation \eqref{eq:vL0-LS2020} is a special case of \eqref{eq:vL-LS2020} when we choose $(\phi_0, \phi_1, G)$ according to
    \begin{equation*} 
    	\left\{\begin{aligned}
    		\Delta \phi_0 & = 0 \text{~in~} \Omega, & \phi_0 & = u(\cdot,T) \text{~on~} \partial \Omega, \\
    		\phi_1 & = 0 \text{~in~} \Omega, & G & = 0 \text{~in~} Q.
    	\end{aligned}\right.
    \end{equation*}
	Recall the definition of $B$ in \eqref{eq:B-LS2020} and the proof of Lemma \ref{lem:vEx-LS2020}.
	From \eqref{eq:vnf-LS2020} we obtain
	\[
	\nrm[\mathcal H]{v_\epsilon}^2
	\leq C( \nrm[L^2(\Omega)]{\nabla \phi_0}^2 + ne^{T} \nrm[H^1(\Sigma)]{v_\epsilon}^2),
	\]
	and by taking the limit $\epsilon \to 0^+$ it gives
	\begin{align}
	\nrm[\mathcal H]{v}^2
	= \nrm[\mathcal H]{Bu}^2
	& = C( \nrm[L^2(\Omega)]{\nabla \phi_0}^2 + ne^{T} \nrm[H^1(\Sigma)]{u}^2).  \label{eq:unf-LS2020}
	\end{align}
    It remains to bound the $\nrm{\nabla \phi_0}$ term by $\nrm{u}$. Since $\phi_0$ is a harmonic function in $\Omega$ with Dirichlet data $u(\,\cdot\,, T)|_{\p \Omega}$, so it follows from standard estimates for the Dirichlet problem and from the trace theorem on $\Sigma$ that 
    \[
    \norm{\phi_0}_{H^1(\Omega)} \leq C \norm{u(\,\cdot\,, T)}_{H^{1/2}(\p \Omega)} \leq C \norm{u}_{H^1(\Sigma)}.
    \]
	Then \eqref{eq:unf-LS2020} becomes
	\begin{equation*} 
	\nrm[\mathcal H]{Bu}
	\leq C( \nrm[H^{1}(\Sigma)]{u} + \sqrt{n} e^{T/2} \nrm[H^1(\Sigma)]{u}).
	\end{equation*}
	This is the required statement.
\end{proof}

Readers may note that the constant $e^{T/2}$ in Lemma \ref{lem:tdlv-LS2020}, Proposition \ref{prop:Bdd2-LS2020} and Proposition \ref{prop:Bdd3-LS2020} might not be optimal.

\section{The error operator} \label{sec:KK-LS2020}

In this section we recover the function $F$ in \eqref{eq:nu1z-LS2020} by employing an iterative algorithm, and the function space at the beginning of the algorithm is different from these in the rest of the algorithm.
We define
\[
\mathcal H_{\mathrm{init}} := \{ h \in H^1_{\mathrm{loc}}(\{t = z \}) \,;\, h \text{~satisfies~} \eqref{eq:zc-LS2020} \} \, \cap \, H_{\overline \Gamma}^{s_0} (\{t = z\}).
\]
Here $s_0$ shall be chosen large enough such that Proposition \ref{prop:sol_smooth_1} can be applied.
The subscript in $\mathcal H_{\mathrm{init}}$ stands for ``initial'', and $\mathcal H_{\mathrm{init}}$ is the function space at the beginning of the iteration.

Recall the operators $A_0'$ and $B_\epsilon$ defined in \eqref{eq:A1-LS2020} and \eqref{eq:B-LS2020}.
We wish to consider the operator $B A_0'$.
By 
Proposition \ref{prop:sol_smooth_1}, we see that if $h \in \mathcal H_{\mathrm{init}}$, we have $A_0' h \in H^1(\Sigma)$, and $h = A_0' h$ on $\partial \Gamma$.
Moreover, by \eqref{eq:vLe-LS2020}, \eqref{eq:uep-LS2020} and \eqref{eq:B-LS2020} we see $A_0' h = B A_0' h$ on $\partial \Gamma$.
Hence, one can conclude
\begin{equation} \label{eq:hcB1-LS2020}
	h - B A_0' h  = 0 \ \text{on} \ \partial \Gamma.
\end{equation}
By Proposition \ref{prop:Bdd3-LS2020} we have $B A_0' h \in \mathcal H'$, so,
\begin{equation} \label{eq:hcB2-LS2020}
	h - B A_0' h \in \mathcal H'.
\end{equation}
Combining \eqref{eq:hcB1-LS2020} and \eqref{eq:hcB2-LS2020}, we can conclude
\begin{equation*} 
	\forall h \in \mathcal H_{\mathrm{init}} \ \Rightarrow \
	h - B A_0' h \in \mathcal H.
\end{equation*}
Hence we can define an \emph{error operator} $K$ by:
\begin{equation} \label{eq:Kdef-LS2020}
K := \lim_{\epsilon \to 0^+} K_\epsilon, \quad \text{where} \quad
K_\epsilon \colon \mathcal H_{\mathrm{init}} \to \mathcal H, \  \ h \mapsto (I - B_\epsilon A_0') h.
\end{equation}


In the linearized inverse problem for \eqref{eq:A1-LS2020}, $K F$ indicates the difference between the original $F$ and the approximation $B(u|_\Sigma) = B A_0' F$.
For $F \in \mathcal{H}' \cap H^{s_0}(\{t=z\})$ with $\supp F \subset \supp \overline \Gamma $,
we have
\[
(I - K)F = B(A_0' F) = B(A_0' F) = B(u|_{\Sigma}).
\]
Note that $u|_{\Sigma}$, $B$ and $K$ are all known.
Hence one would expect to recover $F$ using the formula $(I - K)^{-1} B(u|_{\Sigma})$ provided that $I - K$ is invertible, which would hold true in particular if the operator norm $\nrm{K}$ between suitable spaces is strictly less than $1$.

Instead of \eqref{eq:Kdef-LS2020}, it would be more convenient if the domain and image of $K$ are the same.
This is necessary, for instance, to perform a Neumann series argument.
This leads us to considering the restriction of $K$ to $\mathcal H$.
We denote the restriction as $\tilde K$, i.e.,
\begin{equation*} 
\tilde K \colon \mathcal{H} \cap H^{s_0}(\{t=z\}) \to  \mathcal H, \ \ h \mapsto \ (I - B A_0') h.
\end{equation*}
We will prove in Proposition \ref{prop:Kc-LS2020} that $\tilde K$ extends as a bounded operator $\mathcal{H} \to \mathcal{H}$ with norm strictly less than $1$ if $T$ is large enough.
Based on this, a reconstruction formula
\(
F = (I + \sum_{j \geq 0} \tilde K^j K) B (u|_{\Sigma})
\)
is given in \eqref{eq:FA-LS2020}.
After obtaining $F$, the $f$ can be recovered by $f = ZF$.
Based on this, a stability result is also given, see Proposition \ref{prop:RF-LS2020} for details.
An illustration of the recovering procedure of $F$ is given in Fig.~\ref{fig:1-LS2020}.
\begin{figure}[h]
	\begin{tikzcd}
		\mathcal H_{\mathrm{init}}
		\arrow[r,"B A_0'"] &
		\mathcal H',
	\end{tikzcd} \
	\begin{tikzcd}
		\mathcal H_{\mathrm{init}}
		\arrow[r,"K"] &
		\mathcal H \arrow[r,"\tilde K"] &
		\mathcal H \arrow[r,"\tilde K"] &
		\cdots \arrow[r,"\tilde K"] &
		\mathcal H \arrow[r,"\tilde K"] &
		\cdots
	\end{tikzcd}
	\caption{An illustration of the recovering procedure.}
	\label{fig:1-LS2020}
\end{figure}

To elaborate how the error operator $K$ works, we give an intuitive example.
Assume $F \in \mathcal H'$, $u$ and $v$ satisfy the following systems,
\begin{equation*}
\left\{\begin{aligned}
	(\partial_t^2 -\Delta) u & = 0 \quad \text{in~} \{ t > z\} \\ 
	u & = F \ \ \text{on~} \{ t = z\}
\end{aligned}\right.
,\quad
\left\{\begin{aligned}
	(\partial_t^2 -\Delta) v & = 0 && \text{in}\; Q \\ 
	v & = u && \text{on } \Sigma \cup \Sigma_- \\
	v = \phi_0, \ v_t & = 0 && \text{in}\; \Gamma_T
\end{aligned}\right.
\end{equation*}
with
\begin{equation*}
	\left\{\begin{aligned}
	\Delta \phi_0 & = 0 && \text{in~} \Omega, \\
	\phi_0 & = u(\cdot,T) && \text{on~} \partial \Omega.
	\end{aligned}\right.
\end{equation*}
Then, by ignoring the regularity issues we could say $KF = (u - v)|_\Gamma$.

\subsection{The energy estimates} 

For $\tau \in [0,T]$ and $h \in H^1(\Rn \times \{t = \tau\})$, we define the energy $E(\tau,h)$ as follows
\begin{equation*} 
	E(\tau, h)
	:= \int_{\Rn} (|\nabla_x h(x,\tau)|^2 + |h_t(x,\tau)|^2) \dif x.
\end{equation*}
We also define a functional $\nrm[\mathcal H_T]{h}$ as follows,
\begin{equation} \label{eq:HTh-LS2020}
	\nrm[\mathcal H_T]{h}
	:= \big( \int_{\Omega} (|\nabla_x h(x,T)|^2 + |h_t(x,T)|^2) \dif x \big)^{1/2}.
\end{equation}
The relations among $\nrm[\mathcal H]{\cdot}$, $\nrm[\mathcal H_T]{\cdot}$ and $E(T, \cdot)$ are described in the following lemmas.

\begin{Lemma} \label{lem:ne-LS2020}
	Assume $u$ is $C^2(\overline Q)$ and $u$ and $v$ are associated to each other by means of \eqref{eq:vL0-LS2020},
	and $u_\epsilon$ and $v_\epsilon$ are defined as in the proof of Lemma \ref{lem:vEx-LS2020}. Set  $w_\epsilon:= u - v_\epsilon$.
	Then we have
	\[
	\lim_{\epsilon \to 0^+} \nrm[\mathcal H]{w_\epsilon}
	= \lim_{\epsilon \to 0^+} \nrm[\mathcal H_T]{w_\epsilon}.
	\]
	Moreover, we have
	\[
	\nrm[\mathcal H_T]{u}^2 = \nrm[\mathcal H_T]{u - \phi_0}^2 + \int_\Omega |\nabla \phi_0(x)|^2 \dif x.
	\]
\end{Lemma}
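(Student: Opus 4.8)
The plan is to handle the two assertions separately, and the first one rests on a single energy identity. I would set $w_\epsilon := u - v_\epsilon$ and observe that it solves the homogeneous wave equation $\square w_\epsilon = 0$ throughout $\widetilde Q_T$: indeed $u$ satisfies \eqref{eq:nu1z-LS2020} in $\{t > z\}$ and $v_\epsilon$ satisfies \eqref{eq:vLe-LS2020} in $Q$, and $\widetilde Q_T$ lies in the intersection of these regions. Applying the energy identity \eqref{eq:0g0-LS2020} to $w_\epsilon$ with $G = 0$ and $\tau = T$, and noting that $\widetilde\Gamma_T = \Gamma_T$ so that $\int_{\widetilde\Gamma_T}(|(w_\epsilon)_t|^2 + |\nabla_x w_\epsilon|^2)\dif S = \nrm[\mathcal H_T]{w_\epsilon}^2$ while $\frac{1}{\sqrt 2}\int_\Gamma(|\nabla_y w_\epsilon|^2 + |(w_\epsilon)_z + (w_\epsilon)_t|^2)\dif S = \nrm[\mathcal H]{w_\epsilon}^2$, I would obtain the exact relation
\[
\nrm[\mathcal H_T]{w_\epsilon}^2 = \nrm[\mathcal H]{w_\epsilon}^2 + 2\Re\int_{\widetilde\Sigma_T}\overline{(w_\epsilon)}_t\, (w_\epsilon)_\nu \dif S.
\]
Everything then reduces to showing that the lateral flux term tends to $0$ as $\epsilon \to 0^+$.

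To control the flux I would use Cauchy--Schwarz, bounding the lateral integral by $2\,\nrm[L^2(\widetilde\Sigma_T)]{(w_\epsilon)_t}\,\nrm[L^2(\widetilde\Sigma_T)]{(w_\epsilon)_\nu}$, and argue that the first factor vanishes in the limit while the second stays bounded. On $\widetilde\Sigma_T \subset \Sigma$ one has $w_\epsilon = u - u_\epsilon$, so the convergence $\nrm[H^1(\Sigma)]{u - u_\epsilon} \to 0$ from \eqref{eq:ue-LS2020} gives both $\nrm[L^2(\widetilde\Sigma_T)]{(w_\epsilon)_t} \to 0$ and $\nrm[H^1(\widetilde\Sigma_T)]{w_\epsilon} \to 0$. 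For the normal derivative I would invoke Lemma \ref{lem:tdlv-LS2020} with $G = 0$ and $\tau = T$, which yields $\nrm[L^2(\widetilde\Sigma_T)]{(w_\epsilon)_\nu}^2 \lesssim ne^T(\nrm[\mathcal H]{w_\epsilon}^2 + ne^T\nrm[H^1(\widetilde\Sigma_T)]{w_\epsilon}^2)$; here $\nrm[\mathcal H]{w_\epsilon}$ stays bounded because $u$ is fixed and $v_\epsilon|_\Gamma$ is Cauchy in $H^1(\Gamma)$ by the construction in Lemma \ref{lem:vEx-LS2020}. Consequently the lateral flux is $o(1)$, and passing to the limit gives $\lim_{\epsilon \to 0^+}\nrm[\mathcal H_T]{w_\epsilon} = \lim_{\epsilon \to 0^+}\nrm[\mathcal H]{w_\epsilon}$, with both limits existing.

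The second identity is a static Dirichlet orthogonality computation on the slice $\{t = T\}$. Since $\phi_0$ is $t$-independent, $(u-\phi_0)_t(\cdot,T) = u_t(\cdot,T)$, so only the spatial gradients require attention. Because $\phi_0$ is harmonic in $\Omega$ and $u(\cdot,T) - \phi_0$ vanishes on $\partial\Omega$ (as $\phi_0 = u(\cdot,T)$ there), Green's formula gives $\int_\Omega \nabla(u(\cdot,T) - \phi_0)\cdot\overline{\nabla\phi_0}\dif x = \int_{\partial\Omega}(u(\cdot,T)-\phi_0)\overline{\partial_\nu\phi_0}\dif S - \int_\Omega(u(\cdot,T)-\phi_0)\overline{\Delta\phi_0}\dif x = 0$. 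Thus $\nabla\phi_0$ and $\nabla(u(\cdot,T)-\phi_0)$ are orthogonal in $L^2(\Omega)$, and the Pythagorean identity $\int_\Omega|\nabla u(\cdot,T)|^2 = \int_\Omega|\nabla(u(\cdot,T)-\phi_0)|^2 + \int_\Omega|\nabla\phi_0|^2$, combined with the matching $u_t$-terms, yields the stated decomposition.

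The main obstacle is the first claim's lateral flux, and specifically the uniform-in-$\epsilon$ bound on the normal derivative $(w_\epsilon)_\nu$ along $\widetilde\Sigma_T$: this quantity is not controlled by the Dirichlet data directly, which is precisely the difficulty created by the characteristic (slanted) geometry. Lemma \ref{lem:tdlv-LS2020} supplies exactly this bound, and the whole argument hinges on pairing it with the vanishing of the tangential and time derivatives of $w_\epsilon$ coming from \eqref{eq:ue-LS2020}. The second claim, by contrast, is an elementary harmonic orthogonality and presents no real difficulty.
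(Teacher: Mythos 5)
Your proof is correct and follows essentially the same route as the paper: both apply the energy identity \eqref{eq:0g0-LS2020} to $w_\epsilon$ with $G=0$ and $\tau=T$, control the lateral flux via Cauchy--Schwarz together with Lemma \ref{lem:tdlv-LS2020} and the convergence \eqref{eq:ue-LS2020}, and prove the second identity by the same harmonic orthogonality (Green's formula with $\Delta \phi_0 = 0$ and vanishing boundary values). The only cosmetic difference is that you obtain the uniform bound on $\nrm[\mathcal H]{w_\epsilon}$ from the Cauchy property of $v_\epsilon|_\Gamma$ established in Lemma \ref{lem:vEx-LS2020}, whereas the paper bootstraps it from the same identity by absorbing a $\frac 1 2 \nrm[\mathcal H]{w_\epsilon}^2$ term, using that $\nrm[\mathcal H_T]{w_\epsilon} = \nrm[\mathcal H_T]{u - \phi_0}$ is independent of $\epsilon$.
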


\begin{proof}
	
	According to the construction of $v_\epsilon$, we know $\omega_\epsilon \in C^2(\overline Q)$.
	Moreover, $\omega_\epsilon$ satisfies
	\begin{equation*}
		\left\{\begin{aligned}
			(\partial_t^2 -\Delta) \omega_\epsilon & = 0 && \text{in}\; Q, \\ 
			v & = u - u_\epsilon && \text{on } \Sigma \cup \Sigma_-.
		\end{aligned}\right.
	\end{equation*}
	Then according to \eqref{eq:0g0-LS2020} we have
	\[
	0
	= \nrm[\mathcal H_T]{\omega_\epsilon}^2 - \nrm[\mathcal H]{\omega_\epsilon}^2 - 2 \Re \int_{\widetilde \Sigma_T}  \partial_t \overline{\omega_\epsilon} \partial_\nu \omega_\epsilon \dif S,
	\]
	Note that $\omega_\epsilon \in C^2(\overline Q)$.
	Combining this with Lemma \ref{lem:tdlv-LS2020}, we can compute
	\begin{align}
	\nrm[\mathcal H]{\omega_\epsilon}^2
	& \leq \nrm[\mathcal H_T]{\omega_\epsilon}^2 + 2 \nrm[H^1(\Sigma)]{\omega_\epsilon} \nrm[L^2(\Sigma)]{\partial_\nu \omega_\epsilon} \nonumber \\
	& \leq \nrm[\mathcal H_T]{\omega_\epsilon}^2 + C \nrm[H^1(\Sigma)]{\omega_\epsilon} (\nrm[\mathcal H]{\omega_\epsilon} + C \nrm[H^1(\Sigma)]{\omega_\epsilon}) \nonumber \\
	& = \nrm[\mathcal H_T]{\omega_\epsilon}^2 + C \nrm[H^1(\Sigma)]{\omega_\epsilon} \nrm[\mathcal H]{\omega_\epsilon} + C^2 \nrm[H^1(\Sigma)]{\omega_\epsilon}^2 \nonumber \\
	& = \nrm[\mathcal H_T]{\omega_\epsilon}^2 + 2C^2 \nrm[H^1(\Sigma)]{\omega_\epsilon}^2 + \frac 1 2 \nrm[\mathcal H]{\omega_\epsilon}^2 + C \nrm[H^1(\Sigma)]{\omega_\epsilon}^2.
	\label{eq:we2-LS2020}
	\end{align}
	We have $\nrm[H^1(\Sigma)]{\omega_\epsilon} \to 0$ as $\epsilon \to 0^+$ due to \eqref{eq:ue-LS2020}.
	And the function $\omega_\epsilon~(:= u - v_\epsilon)$ is independent of $\epsilon$ on $\Gamma_T$ because $v_\epsilon$ is independent of $\epsilon$ on $\Gamma_T$, see \eqref{eq:uep-LS2020}.
	Hence, \eqref{eq:we2-LS2020} implies $\nrm[\mathcal H]{\omega_\epsilon}$ is uniformly bounded when $\epsilon \to 0^+$.
	Based on this fact, we re-compute \eqref{eq:we2-LS2020} in the following way with the help of Lemma \ref{lem:tdlv-LS2020},	
	\begin{align}
	|\nrm[\mathcal H]{\omega_\epsilon}^2 - \nrm[\mathcal H_T]{\omega_\epsilon}^2|
	\leq 2 \nrm[H^1(\Sigma)]{\omega_\epsilon} \nrm[L^2(\Sigma)]{\partial_\nu \omega_\epsilon}
	\leq C \nrm[H^1(\Sigma)]{\omega_\epsilon} (\nrm[\mathcal H]{\omega_\epsilon} + C \nrm[H^1(\Sigma)]{\omega_\epsilon}). \label{eq:we21-LS2020}
	\end{align}
	By the boundedness of $\nrm[\mathcal H]{\omega_\epsilon}$ w.r.t.~$\epsilon$ as well as the fact that $\nrm[H^1(\Sigma)]{\omega_\epsilon} \to 0$ as $\epsilon \to 0^+$, we can conclude from \eqref{eq:we21-LS2020} that
	\begin{align*}
	\nrm[\mathcal H]{\omega_\epsilon} - \nrm[\mathcal H_T]{\omega_\epsilon}  \to 0 \quad \text{as} \quad \epsilon \to 0^+.
	\end{align*}

	For the second claim, we have
	\begin{align*}
	\nrm[\mathcal H_T]{u - \phi_0}^2
	& = \int_\Omega |\nabla_x \big( u(x,T) - \phi_0 \big)|^2 \dif x + \int_\Omega |u_t|^2 \dif x \\
	& = \nrm[\mathcal H_T]{u}^2 - 2 \Re \int_\Omega \nabla_x \big( u \big) \cdot \nabla_x \overline{\phi_0} \dif x + \int_\Omega |\nabla_x \phi_0|^2 \dif x \\
	& = \nrm[\mathcal H_T]{u}^2 - 2 \Re \int_\Omega \nabla_x \big( u - \phi_0 \big) \cdot \nabla_x \overline{\phi_0} \dif x - \int_\Omega |\nabla_x \phi_0|^2 \dif x \\
	& = \nrm[\mathcal H_T]{u}^2 - 2 \Re \int_{\partial \Gamma_T} (u - \phi_0) \partial_\nu \overline{\phi_0} \dif x + 2 \Re \int_\Omega (u - \phi_0) \Delta \phi_0 \dif x \\
	& \quad - \int_\Omega |\nabla_x \phi_0|^2 \dif x \\
	& = \nrm[\mathcal H_T]{u}^2 - \int_\Omega |\nabla_x \phi_0|^2 \dif x.
	\end{align*}
	The last equal sign is due to $u - \phi_0 = 0$ on $\partial \Gamma_T$ and $\Delta \phi_0 = 0$.
	The proof is complete.
\end{proof}

Recall the weighted norm given in \eqref{eq:Weim-LS2020}.

\begin{Lemma} \label{lem:EuT0-LS2020}
	Assume $h$ satisfies $(\p_t^2-\Delta) h = 0$ in $\{t \geq z\}$ with the restriction $h |_\Gamma$ satisfying $h|_\Gamma \in \mathcal H_{\mathrm{init}}$, then we have
	\begin{equation*}
		\nrm[\mathcal H_T]{h}^2 \leq E(T,h) \leq
		\left\{\begin{aligned}
			& \nrm[\mathcal H]{h}^2 + T \nrm[H_w^1(\partial \Gamma)]{h}^2, \text{~if~} h \in \mathcal H_{\mathrm{init}}, \\
			& \nrm[\mathcal H]{h}^2, \text{~if~} h \in \mathcal H \cap H_{\overline \Gamma}^{s_0}(\{t = z\}).
		\end{aligned}\right.
	\end{equation*}
\end{Lemma}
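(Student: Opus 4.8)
The plan is to treat the two inequalities separately: the lower bound is essentially trivial, while the upper bound follows from an energy--flux identity on the spacetime slab trapped between the characteristic plane $\{t=z\}$ and the slice $\{t=T\}$. The lower bound $\nrm[\mathcal H_T]{h}^2 \leq E(T,h)$ is immediate, since both sides integrate the nonnegative density $|\nabla_x h(\cdot,T)|^2+|h_t(\cdot,T)|^2$, the left over $\Omega$ and the right over all of $\Rn$, and $\Omega \subset \Rn$. (By Proposition \ref{prop:sol_smooth_1} the solution $h$ is $C^2$ up to $\{t=z\}$, which is all the regularity the argument below needs.)

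For the upper bound I would integrate the identity \eqref{eq:rts-LS2020} (with $\square h = 0$) over $W := \{(y,z,t)\,;\, z \leq t \leq T\}$, whose boundary is the top $\{t=T\}$ together with the characteristic plane $\{t=z\}$. By the divergence theorem the total flux of the energy current vanishes. Through the top, with outward normal $\partial_t$, the flux is exactly $E(T,h)$ (here $h$ vanishes on $\{t=T\}\cap\{z>T\}$ since $h$ is supported in $\{t\geq z\}$). Through $\{t=z\}$ the outward unit normal is $\tfrac1{\sqrt2}(0,\dots,0,1,-1)$, and computing the current against it exactly as in the derivation of \eqref{eq:0g0-LS2020} gives the flux $\tfrac1{\sqrt2}\int_{\{t=z\}\cap\{z\leq T\}}(|\nabla_y h|^2+|h_z+h_t|^2)\dif S$. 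The decisive feature is that $\nabla_y$ and $Z=\partial_z+\partial_t$ are tangential to $\{t=z\}$, so this integrand depends only on the characteristic data $F:=h|_{\{t=z\}}$. To legitimize the divergence theorem on the unbounded region $W$ I would use finite speed of propagation: data at a point $(y,z,z)$ influences only $\{t\geq z\}$, so points with $z>T$ cannot affect the slab, and the restriction of $h$ to $W$ depends only on $F|_{\{z\leq T\}}$, which is compactly supported; hence $h(\cdot,t)$ is compactly supported for $t\leq T$ and there is no flux at spatial infinity. This yields $E(T,h)=\tfrac1{\sqrt2}\int_{\{t=z\}\cap\{z\leq T\}}(|\nabla_y F|^2+|ZF|^2)\dif S$.

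It then remains to split the characteristic plane into the slanted disk $\Gamma$ and its complement inside the support. On $\Gamma$ the integral reproduces $\nrm[\mathcal H]{h}^2$ by \eqref{eq:NN0-LS2020}. If $h\in\mathcal H\cap H^{s_0}_{\ol{\Gamma}}$ the data is supported in $\ol{\Gamma}$, there is no complementary contribution, and $E(T,h)=\nrm[\mathcal H]{h}^2$; this is the second case. In the general case $h\in\mathcal H_{\mathrm{init}}$ the leftover support lies in the half-cylinder $\{z\geq 0,\ |y|\leq 1\}$, where \eqref{eq:zc-LS2020} forces $ZF=0$, so $F$ is constant along $Z$ and only $|\nabla_y F|^2$ survives, independent of $z$. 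Integrating this $z$-independent value over $z\in[\sqrt{1-|y|^2},T]$ produces a factor at most $T$, and pushing the resulting $\int|\nabla_y F|^2\dif y$ onto the sphere $\partial\Gamma$ (parametrized by $z_y=\sqrt{1-|y|^2}$) converts it into the weighted norm $\nrm[H_w^1(\partial\Gamma)]{h}^2$ of \eqref{eq:Weim-LS2020}, the weight $1/\sqrt{1-|y|^4}$ arising precisely as the Jacobian relating the flat $y$-gradient to the tangential gradient $\nabla_{\partial\Gamma}F$. Altogether this gives $E(T,h)\leq\nrm[\mathcal H]{h}^2+T\nrm[H_w^1(\partial\Gamma)]{h}^2$.

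The step I expect to be most delicate is this final geometric reduction: verifying that the $z$-independence coming from $ZF=0$ lets one replace $\nabla_y F$ in the cylinder by the tangential gradient of $F$ on $\partial\Gamma$, and that the slanted surface measure $\sqrt 2\,\dif y\,\dif z$ together with the spherical Jacobian combine to produce exactly the weight $1/\sqrt{1-|y|^4}$. The truncation via finite speed of propagation, required to make the flux identity valid on the noncompact support permitted by \eqref{eq:zc-LS2020}, is the other point that must be argued with care.
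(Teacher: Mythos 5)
Your proposal matches the paper's proof essentially step for step: the same energy--flux identity \eqref{eq:rts-LS2020} integrated over the slab $\{z \leq t \leq T\}$ yielding \eqref{eq:ETh-LS2020}, the same splitting of the characteristic plane into $\Gamma$ and the half-cylinder where $Zh = 0$ forces $z$-independence, and the same conversion of the leftover term (bounded by a factor $T$) into the weighted norm \eqref{eq:Weim-LS2020}, which the paper realizes via the tangential vector field $\vec X = \sqrt{1-|y|^2}\,\nabla_y - y\partial_z - y\partial_t$ and the surface-measure relation producing exactly the weight $1/\sqrt{1-|y|^4}$. Your explicit finite-speed-of-propagation truncation justifying the divergence theorem on the unbounded slab is a detail the paper omits (it only says the computation is similar to Section \ref{subsec:IBVP-LS2020}), and your handling of the case $h \in \mathcal H \cap H_{\overline \Gamma}^{s_0}(\{t=z\})$ via the support condition is equivalent to the paper's appeal to $h = 0$ on $\partial \Gamma$.
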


\begin{proof}
	By the definition of the energy and the norm, it is obvious that $\nrm[\mathcal H_T]{h}^2 \leq E(T,h)$.
	
	Now we prove the second inequality.
	Recall Remark \ref{rem:vEx-LS2020}.
	Replacing the function $v$ in identity \eqref{eq:rts-LS2020} by $h$ and integrating the identity in the region
	\[
	\{ (y,z,t) \in \R^{n+1} \,;\, y \in \R^{n-1},\, z \leq t \leq T \},
	\]
	we can obtain
	\begin{equation} \label{eq:ETh-LS2020}
	E(T,h) = \frac 1 {\sqrt 2} \int_{\{t = z\}} (|\nabla_y h|^2 + |\partial_z \big( h(y,z,z) \big)|^2) \dif S.
	\end{equation}
	The computation is similar to what has been done in Section \ref{subsec:IBVP-LS2020}, so we omit the details here.
	
	When $h|_\Gamma \in \mathcal H' \cap H_{\overline \Gamma}^{s_0}(\{t = z\})$, we know $h \in C^2(\{t \geq z\})$.
	Moreover, $h \in \mathcal H'$ gives  $\partial_z \big( h(y,z,z) \big) = 0$ outside $\Gamma$ and $\nabla_y h$ is constant along the curve $\gamma(z) := (y,z,z)$ for each $y$,
	so \eqref{eq:ETh-LS2020} becomes
	\begin{align}
	E(T,h)
	& = \frac 1 {\sqrt 2} \int_{\{t = z \,;\, t \leq T\}} (|\nabla_y h|^2 + |\partial_z \big( h(y,z,z) \big)|^2) \dif S \nonumber \\
	& =  \nrm[\mathcal H]{h}^2 + \frac 1 {\sqrt 2} \int_{\{t = z \,;\, t \leq T\} \backslash \Gamma} |\nabla_y \big( h(y, z_y, z_y) \big)|^2 \dif S \quad (\partial_z \big( h(y,z,z) \big) = 0) \nonumber \\
	& = \nrm[\mathcal H]{h}^2 + \int_{\pi(\{t = z \,;\, t \leq T\} \backslash \Gamma)} \big| \nabla_y \big( h(y, z_y, z_y) \big) \big|^2 \dif y \dif z \nonumber \\
	& \leq \nrm[\mathcal H]{h}^2 + T \int_{|y| \leq 1} |\nabla_y \big( h(y, z_y, z_y) \big)|^2 \dif y \nonumber \\
	& = \nrm[\mathcal H]{h}^2 + T \int_{|y| \leq 1} \frac {1} {1-|y|^2} |\vec Xh(y, z_y, z_y)|^2 \dif y, \label{eq:ETX-LS2020}
	\end{align}
	where $z_y := \sqrt{1-|y|^2}$, $\pi \colon (x,t) \mapsto x$ is a projection map, and $\vec X$ is a vector of vector fields defined as
	\[
	\vec X := \sqrt{1-|y|^2} \nabla_y - y \partial_z - y \partial_t.
	\]

	The vector field $\vec X$ is tangential to $\partial \Gamma_+ := \partial \Gamma \cap \{(y,z,t) \,;\, z \geq 0 \}$.
	To see this, we denote $S_1 := \{(y,z,t) \,;\, |(y,z)|^2 = 1 \}$ and $S_2 := \{(y,z,t) \,;\, t = z \}$.
	Then $\partial \Gamma_+ = S_1 \cap S_2$, and $\vec X (|(y,z)|^2 - 1) = \vec X(t - z) = 0$ on $\partial \Gamma_+$.
	Therefore, $\vec X$ is tangential to both $S_1$ and $S_2$ at $\partial \Gamma_+$, and hence tangential to $\partial \Gamma_+$.
	
	Denote the volume form on $\partial \Gamma_+$ as $\df S$, then it can be checked that
	\[
	\frac {\sqrt{1 + |y|^2}} {\sqrt{1 - |y|^2}} \dif y = \dif S.
	\]
	Hence, \eqref{eq:ETX-LS2020} implies
	\begin{equation} \label{eq:ted1-LS2020}
	E(T,h)
	\leq \nrm[\mathcal H]{h}^2 + T \int_{\partial \Gamma_+} \frac 1 {\sqrt{1-|y|^4}} |\vec Xh|^2 \dif S
	\leq \nrm[\mathcal H]{h}^2 + T \nrm[H_w^1(\partial \Gamma)]{h}^2.
	\end{equation}
	Hence, when $h \in C^2 \cap \mathcal H$, we know $h = 0$ on $\partial \Gamma$, so $E(T,h) \leq \nrm[\mathcal H]{h}^2$.
	The proof is done.
\end{proof}

\subsection{Boundedness of the error operators} 

In the definition of $K$ and $\tilde K$, the two operators are defined on certain subsets of $\mathcal H'$ and $\mathcal H$, respectively.
As we mentioned earlier, their domain of definitions can be extended to the whole $\mathcal H'$ and $\mathcal H$.
To that end, we first show some boundedness result of $K$ and $\tilde K$ in certain dense subsets of their domain.
For simplicity we denote the operator norm of $\tilde K$ as $\nrm{\tilde K}$, i.e.,~$\nrm{\tilde K} = \nrm[\mathcal H \to \mathcal H]{\tilde K}$, and we also write 
\[
\nrm{K} := \sup \Big\{ \frac {\nrm[\mathcal H]{Kh}} {\nrm[\mathcal H]{h} + \nrm[H_w^1(\partial \Gamma)]{h}} \,;\, h \in \mathcal H_{\mathrm{init}}, \, h\neq 0 \Big\}.
\]

\begin{Lemma} \label{lem:Kb-LS2020}
	Let $\sigma \in (0,1)$ be as in \eqref{outside:ball}.
	If $C>0$ is the constant from Lemma \ref{lem:EuT0-LS2020} and $s_0$ is determined as in Remark \ref{rem:vEx-LS2020}, then we have
	\begin{alignat}{2}
	\nrm[\mathcal{H}]{Kh} & \leq \nrm[\mathcal{H}]{h} + T^{1/2} \nrm[H_w^1(\partial \Gamma)]{h}, & \quad & h \in \mathcal H_{\mathrm{init}}, \label{eq:Ewue2-LS2020} \\
	\nrm[\mathcal{H}]{\tilde Kh} & \leq \nrm[\mathcal{H}]{h}, && h \in \mathcal{H} \cap H_{\overline \Gamma}^{s_0}(\{t = z\}). \label{eq:Ewue-LS2020}
	\end{alignat}
\end{Lemma}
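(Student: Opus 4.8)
The plan is to make the intuitive identity $Kh = (u-v)|_\Gamma$ rigorous through the regularization in \eqref{eq:Kdef-LS2020} and then read off the two estimates directly from Lemma \ref{lem:ne-LS2020} and Lemma \ref{lem:EuT0-LS2020}. First I would fix $h \in \mathcal H_{\mathrm{init}}$, let $u$ be the solution of \eqref{eq:nu1z-LS2020} with $u|_{\{t=z\}} = h$ (which is $C^2$ up to $\{t \geq z\}$ for $s_0$ large by Proposition \ref{prop:sol_smooth_1} and Remark \ref{rem:vEx-LS2020}), and let $v_\epsilon$ solve \eqref{eq:vLe-LS2020}. Writing $w_\epsilon := u - v_\epsilon$, the definitions of $B_\epsilon$ in \eqref{eq:B-LS2020} and $K_\epsilon$ give $K_\epsilon h = h - \tilde v_\epsilon = w_\epsilon|_\Gamma$ (extended by zero, since both $h$ and $\tilde v_\epsilon$ are supported in $\ol\Gamma$). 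Then the $H^1(\Gamma)$-convergence $v_\epsilon|_\Gamma \to v|_\Gamma$ from Lemma \ref{lem:vEx-LS2020} yields $\nrm[\mathcal H]{Kh} = \lim_{\epsilon \to 0^+} \nrm[\mathcal H]{w_\epsilon}$.

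Next I would transfer the norm from $\Gamma$ to the final time slice $\Gamma_T$. By the first claim of Lemma \ref{lem:ne-LS2020}, $\lim_{\epsilon} \nrm[\mathcal H]{w_\epsilon} = \lim_{\epsilon} \nrm[\mathcal H_T]{w_\epsilon}$. On $\Gamma_T$ one has $v_\epsilon = \phi_0$ and $\partial_t v_\epsilon = 0$, and $\phi_0$ is independent of $\epsilon$ because $u_\epsilon(\cdot,T) = u(\cdot,T)$ on $\partial \Omega$ by \eqref{eq:uep-LS2020}; hence $\nrm[\mathcal H_T]{w_\epsilon}^2 = \nrm[\mathcal H_T]{u - \phi_0}^2$ for every $\epsilon$, and the limit is trivial. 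The second claim of Lemma \ref{lem:ne-LS2020} then gives $\nrm[\mathcal H_T]{u - \phi_0}^2 = \nrm[\mathcal H_T]{u}^2 - \int_\Omega |\nabla \phi_0|^2 \leq \nrm[\mathcal H_T]{u}^2$, so altogether $\nrm[\mathcal H]{Kh}^2 \leq \nrm[\mathcal H_T]{u}^2$.

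Finally I would invoke Lemma \ref{lem:EuT0-LS2020} applied to the solution $u$, whose trace on $\Gamma$ is $h$: this yields $\nrm[\mathcal H_T]{u}^2 \leq E(T,u) \leq \nrm[\mathcal H]{h}^2 + T \nrm[H_w^1(\partial \Gamma)]{h}^2$ when $h \in \mathcal H_{\mathrm{init}}$, and $\nrm[\mathcal H_T]{u}^2 \leq E(T,u) \leq \nrm[\mathcal H]{h}^2$ when $h \in \mathcal H \cap H_{\ol\Gamma}^{s_0}(\{t=z\})$. Combining with the bound $\nrm[\mathcal H]{Kh}^2 \leq \nrm[\mathcal H_T]{u}^2$ and using $\sqrt{a+b} \leq \sqrt a + \sqrt b$ gives \eqref{eq:Ewue2-LS2020}; for the second estimate one recalls that $\tilde K$ is the restriction of $K$ to $\mathcal H$, so $\tilde K h = Kh$ and the same chain yields \eqref{eq:Ewue-LS2020}. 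I expect the only delicate points to be the bookkeeping ones — verifying that $K_\epsilon h$ is exactly $w_\epsilon|_\Gamma$, that $\nrm[\mathcal H_T]{w_\epsilon}$ is genuinely $\epsilon$-independent, and that the various convergences legitimize passing all limits — since the substantive energy identities are already supplied by Lemmas \ref{lem:ne-LS2020} and \ref{lem:EuT0-LS2020}, so no new estimate is required.
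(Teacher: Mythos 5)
Your proposal is correct and follows essentially the same route as the paper's proof: you identify $K_\epsilon h = w_\epsilon|_\Gamma$ with $w_\epsilon = u - v_\epsilon$, pass from $\nrm[\mathcal H]{\cdot}$ to $\nrm[\mathcal H_T]{\cdot}$ via the first claim of Lemma \ref{lem:ne-LS2020}, exploit the $\epsilon$-independence of $w_\epsilon = u - \phi_0$ on $\Gamma_T$ together with the Pythagorean identity to obtain $\nrm[\mathcal H]{Kh}^2 \leq \nrm[\mathcal H_T]{u}^2$, and conclude with the energy bounds of Lemma \ref{lem:EuT0-LS2020}, exactly as the paper does. The only cosmetic difference is that you spell out the bookkeeping steps (the limit justification via Lemma \ref{lem:vEx-LS2020} and the final $\sqrt{a+b} \leq \sqrt{a} + \sqrt{b}$) that the paper leaves implicit.
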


\begin{proof}
	For any function $h \in \mathcal H' \cap H_{\overline \Gamma}^{s_0}(\{t = z\})$, let $u$ be the solution of system \eqref{eq:nu1z-LS2020} driven by $h$ so that $u = h$ on $\Gamma$,
	then according to Proposition \ref{prop:sol_smooth_1} we can conclude $u \in C^{\lceil n/2 \rceil + 2}$ in $\{t \geq z\}$.
	
	Assume $v$ is associated to $u$ by \eqref{eq:vL0-LS2020},
	and $u_\epsilon$ and $v_\epsilon$ are defined as in the proof of Lemma \ref{lem:vEx-LS2020},
	and we denote $w_\epsilon = u - v_\epsilon$.
	By \eqref{eq:Kdef-LS2020} we see
	\[
	K_\epsilon h
	= w_\epsilon |_\Gamma
	= h - v_\epsilon |_\Gamma
	= \omega_\epsilon,
	\]
	so we have
	\begin{equation*}
	\nrm[\mathcal H]{Kh}^2
	= \lim_{\epsilon \to 0 ^+} \nrm[\mathcal H]{K_\epsilon h}^2
	= \lim_{\epsilon \to 0 ^+} \nrm[\mathcal H]{w_\epsilon}^2
	= \lim_{\epsilon \to 0 ^+} \nrm[\mathcal{H}_T]{w_\epsilon}^2,
	\end{equation*}
	where the last equal sign is due to Lemma \ref{lem:ne-LS2020}.
	Note that on $\Gamma_{T}$, $w_\epsilon = u - \phi_0$, which is independent of $\epsilon$, so $\nrm[\mathcal H]{Kh}^2
	\leq \nrm[\mathcal H_T]{u - \phi_0}^2$.
	Combining this with Lemma \ref{lem:ne-LS2020} and Lemma \ref{lem:EuT0-LS2020} we have
	\begin{alignat}{2}
	\nrm[\mathcal H]{Kh}^2
	& \leq \nrm[\mathcal H_T]{u}^2 & \quad & \text{(by Lemma \ref{lem:ne-LS2020})} \nonumber \\
	& \leq E(T,u)
	\leq \nrm[\mathcal H]{u}^2 + T \nrm[H_w^1(\partial \Gamma)]{u}^2 && \text{(by Lemma \ref{lem:EuT0-LS2020})} \nonumber \\
	& = \nrm[\mathcal H]{h}^2 + T \nrm[H_w^1(\partial \Gamma)]{h}^2. \nonumber
	\end{alignat}
	This is \eqref{eq:Ewue2-LS2020}.
	When $h \in \mathcal H \cap H_{\overline \Gamma}^{s_0}(\{t = z\})$, we know $h |_{\partial \Gamma}= 0$, so similar to \eqref{eq:Ewue2-LS2020} we have
	\begin{equation} \label{eq:Ewu2-LS2020}
	\nrm[\mathcal H]{\tilde K h}^2
	\leq \nrm[\mathcal H_T]{u}^2
	\leq E(T,u)
	\leq \nrm[\mathcal H]{u}^2 
	= \nrm[\mathcal H]{h}^2.
	\end{equation}
	This gives \eqref{eq:Ewue-LS2020}.
	We arrive at the conclusion.
\end{proof}

We are ready to extend $\tilde K$ and to show its boundedness with norm strictly less than \nolinebreak[4] $1$.

\begin{Proposition} \label{prop:Kc-LS2020}
	$\tilde K$ can be extended to $\mathcal{H}$, and it is a bounded linear operator $\mathcal{H} \to \mathcal{H}$ with $\nrm{\tilde K} < 1$.
\end{Proposition}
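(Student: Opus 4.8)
The plan is to combine the extension-by-density argument with a strict gain extracted from the energy chain already established in Lemma \ref{lem:Kb-LS2020}. First I would handle the extension. By \eqref{eq:Ewue-LS2020} the operator satisfies $\nrm[\mathcal H]{\tilde K h} \leq \nrm[\mathcal H]{h}$ for every $h$ in the subspace $\mathcal H \cap H^{s_0}_{\overline \Gamma}(\{t=z\})$. Since $\nrm[\mathcal H]{\cdot}$ is comparable to the $H^1(\Gamma)$-norm on $\mathcal H$ (the Poincar\'e inequality noted after \eqref{eq:zc2-LS2020}) and smooth functions compactly supported in the open disk $\Gamma$ are dense in $\mathcal H$ and lie in $H^{s_0}_{\overline \Gamma}$, this subspace is dense in $(\mathcal H, \nrm[\mathcal H]{\cdot})$. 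The bounded linear extension theorem then yields a unique bounded extension $\tilde K \colon \mathcal H \to \mathcal H$ with $\nrm{\tilde K} \leq 1$, and it remains only to upgrade this to a strict inequality for $T$ large.

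Next I would locate precisely where the slack sits. For $h \in \mathcal H \cap H^{s_0}_{\overline \Gamma}(\{t=z\})$ let $u$ solve \eqref{eq:nu1z-LS2020} with $u|_\Gamma = h$. The chain \eqref{eq:Ewu2-LS2020} reads
\[
\nrm[\mathcal H]{\tilde K h}^2 \leq \nrm[\mathcal H_T]{u}^2 \leq E(T,u) \leq \nrm[\mathcal H]{u}^2 = \nrm[\mathcal H]{h}^2,
\]
and the only step that is not an equality is $\nrm[\mathcal H_T]{u}^2 \leq E(T,u)$. By \eqref{eq:HTh-LS2020} the left-hand side is the energy of $u$ inside the spatial ball $\Omega$ at time $T$, whereas by the energy identity \eqref{eq:ETh-LS2020} (Lemma \ref{lem:EuT0-LS2020}) the right-hand side is the total energy $E(T,u) = \nrm[\mathcal H]{h}^2$, which is independent of $T$. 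Their difference is exactly the energy that has left $\Omega$ by time $T$. Hence it suffices to show that a fixed positive fraction of the energy escapes, i.e.\ that there are $T = T(n)$ and $\delta_0 < 1$, both independent of $h$, with
\[
\int_\Omega \big( |\nabla_x u(x,T)|^2 + |u_t(x,T)|^2 \big) \dif x \leq \delta_0 \, \nrm[\mathcal H]{h}^2 .
\]

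To produce this uniform gain I would invoke local energy decay. Since $\square u = 0$ in $\{t > z\}$ and $h$ is supported in $\overline \Gamma$, finite speed of propagation shows that the Cauchy data of $u$ on the slice $\{t = 1\}$ is supported in a fixed ball $B_{R_0}$ and carries total energy $\nrm[\mathcal H]{h}^2$. Because $\R^n$ is non-trapping, the free wave equation obeys uniform local energy decay: the energy remaining in the bounded set $\Omega$ at time $T$ is at most $\delta(T)$ times the total energy, where $\delta(T) \to 0$ as $T \to \infty$ and, crucially, $\delta(T)$ does not depend on the data. In odd dimensions the sharp Huygens' principle gives $\delta(T) = 0$ once $T$ exceeds the diameter of the support, recovering the fact that $\tilde K = 0$ there. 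Choosing $T$ so large that $\delta(T) \leq \delta_0 < 1$ gives $\nrm[\mathcal H]{\tilde K h} \leq \sqrt{\delta_0}\, \nrm[\mathcal H]{h}$ on the dense subspace, and therefore $\nrm{\tilde K} \leq \sqrt{\delta_0} < 1$ after the extension of the first step.

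The main obstacle is this last step: one must transfer the characteristic initial value problem \eqref{eq:nu1z-LS2020} to a genuine Cauchy problem with compactly supported finite-energy data, and then apply local energy decay \emph{uniformly} over all $h \in \mathcal H$. It is the uniformity, as opposed to decay for a single fixed solution, that forces the operator norm strictly below $1$; in the even-dimensional case this uniform decay is exactly what substitutes for the unique continuation argument used in \cite{SU09}. A fallback, should a clean uniform-decay citation be awkward with the slanted data, would be a weak-compactness contradiction argument on a norming sequence $h_j$, but the direct local energy decay route is the cleaner one.
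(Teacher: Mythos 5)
Your proposal is correct and follows essentially the same route as the paper: extend $\tilde K$ by density from $\mathcal{H} \cap H_{\overline \Gamma}^{s_0}(\{t=z\})$ using the contraction bound \eqref{eq:Ewue-LS2020}, then reduce the slanted problem to a genuine Cauchy problem at a fixed intermediate time (the paper takes data at $t=T_0 \in (1,T)$, you take $t=1$) and invoke uniform local energy decay for the free wave equation — the paper cites \cite{ike2005var}, which gives precisely the data-independent bound $\nrm[{\mathcal H}_t]{u}^2 \leq \lambda E(T_0,u)/(t-C)$ that your argument requires — to force $\nrm{\tilde K} < 1$ for $T$ large. You also correctly identified that this uniform decay is what replaces the unique continuation argument of \cite{SU09}, which is exactly the point made in the paper's remark following the proposition.
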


\begin{proof}
The linearity of $\tilde K$ can be easily seen from systems \eqref{eq:nu1z-LS2020} and \eqref{eq:vL-LS2020}.
Recall the $s_0$ in Remark \ref{rem:vEx-LS2020}.
It can be checked that $\mathcal{H} \cap H_{\overline \Gamma}^{s_0}(\{t = z\})$ is dense in $\mathcal{H}$.
Assume $h \in \mathcal H$, we choose a sequence $\{h_j\}_{j \geq 1}$ in $\mathcal{H} \cap H_{\overline \Gamma}^{s_0}(\{t = z\})$ such that $h_j \to h$ under the $\nrm[\mathcal H]{\cdot}$-norm.
By \eqref{eq:Ewue-LS2020} we know $\{ \tilde Kh_j \}_{j \geq 1}$ is a Cauchy sequence in $\mathcal H$, so $\lim_{j \to \infty} \tilde Kh_j$ exists and the limit is unique, and we define
\begin{equation}\label{k_tilde_z}
\tilde Kh := \lim_{j \to \infty} \tilde Kh_j.
\end{equation}

The norm estimate for $\nrm{\tilde K}$ in Lemma \ref{lem:Kb-LS2020} can be improved to $\nrm{\tilde K} < 1$ by using local energy decay.
Indeed, fix a constant $T_0 \in (1,T)$, then when $t > T_0$, \eqref{eq:nu1z-LS2020} gives
\begin{equation} \label{eq:nu2z-LS2020}
	\left\{\begin{aligned}
	(\partial_t^2 -\Delta) u & = 0 && \text{in~} \Rn \times [T_0, T], \\ 
	u = u_0, \ u_t & = u_1 && \text{on~} \{ t = T_0\},
	\end{aligned}\right.
\end{equation}
where $u_0(x) = u(x,T_0)$ and $u_1(x) = u_t(x,T_0)$.
By the local energy result in \cite{ike2005var} we see that when $t - T_0$ is large enough, we have
\begin{equation} \label{eq:leday-LS2020}
	\nrm[{\mathcal H}_{t}]{u}^2 \leq \frac {\lambda E(T_0,u)} {t - C}, \ \forall t \colon \max \{T_0,C\} \leq t \leq T,
\end{equation}
for some real numbers $\lambda$ and $C$ independent of $u$ and $t$.
Here the $\mathcal H_t$-norm is defined by changing $T$ to $t$ in \eqref{eq:HTh-LS2020}.
The authors remind that originally the result in \cite{ike2005var} is for wave equations in an exterior domain with a bounded Dirichlet obstacle contained inside.
But the method in \cite{ike2005var} applies directly to the case where the obstacle is an empty set, i.e.~applies to the free upper space case, e.g.~\eqref{eq:nu2z-LS2020}.
See also \cites{tam1981potential, Vodev04}.

Now let $h \in \mathcal{H} \cap H_{\overline \Gamma}^{s_0}(\{t = z\})$ and let $u$ be associated with $h$ by \eqref{eq:nu1z-LS2020}.
By \eqref{eq:leday-LS2020} and Lemma \ref{lem:EuT0-LS2020}, when $T - T_0$ is large enough, we see that
\[
C \nrm[{\mathcal H}_T]{u}^2
\leq \lambda' E(T_0,u)
\leq \lambda' \nrm[\mathcal H]{u}^2
\]
of some constant $\lambda' < 1$, and the second equal sign is due to the tact that $h \in \mathcal H$.
Hence, the chain of inequalities \eqref{eq:Ewu2-LS2020} can be further improved: for $h \in \mathcal{H} \cap H_{\overline \Gamma}^{s_0}(\{t = z\})$ we have
\begin{equation*}
	\nrm[\mathcal H]{\tilde Kh}^2
	\leq C \nrm[\mathcal H_T]{u}^2
	\leq \lambda' \nrm[\mathcal H]{u}^2
	= \lambda' \nrm[\mathcal H]{h}^2.
\end{equation*}
Therefore, we can conclude $\nrm{\tilde K} < 1$.
The proof is complete.
\end{proof}

\begin{Remark} 
	We believe that the arguments in \cite[Proof of Theorem 1]{SU09} in proving $\nrm{\tilde K} < 1$ may not directly apply to our case.
	This is mainly because ${\tilde K}$ is defined by a limit process, see \eqref{k_tilde_z}. In the limit, the condition $\nrm{\tilde K} < 1$ becomes $\nrm{\tilde K} \leq 1$.
	Consequently, it is not possible to use a Neumann series argument to invert an operator of the form $I-\tilde K$.
	See Section \ref{sec:ReLS-LS2020} for details.
\end{Remark}

\subsection{Reconstruction and stability result of the linearized sound speed} \label{sec:ReLS-LS2020}

By definition \eqref{eq:Kdef-LS2020} we have $(I - K)F = B A_0' F$.
Proposition \ref{prop:Kc-LS2020} says the operator norm of $\tilde K$ is less than $1$, so we can use Neumann series to recover $F$.
Before that, we need the following claim.
\begin{Claim} \label{clm:At-LS2020}
	$(B A_0') K = K (B A_0').$
\end{Claim}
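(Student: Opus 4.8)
The plan is to exploit the purely algebraic fact that, by its very definition \eqref{eq:Kdef-LS2020}, the operator $K$ is a function of the single operator $B A_0'$: writing $P := B A_0'$ we have $K = I - P$, so that formally
\[
PK = P(I-P) = P - P^2 = (I-P)P = KP .
\]
Thus the whole content of the claim is to check that every composition occurring here is well defined on the relevant dense domain and extends by continuity. First I would record the mapping properties on the dense subspace $\mathcal H_{\mathrm{init}} = \mathcal H \cap H^{s_0}_{\overline\Gamma}(\{t=z\})$. For $h \in \mathcal H_{\mathrm{init}}$, Proposition \ref{prop:sol_smooth_1} gives $A_0' h \in H^1(\Sigma)$ and Proposition \ref{prop:Bdd3-LS2020} gives $Ph = B A_0' h \in \mathcal H'$; combining \eqref{eq:hcB1-LS2020}--\eqref{eq:hcB2-LS2020} shows $Kh = h - Ph \in \mathcal H$, whence $\supp(Ph)\subset\overline\Gamma$ and therefore $Ph \in \mathcal H$ as well. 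So $P$ and $K$ both map $\mathcal H_{\mathrm{init}}$ into $\mathcal H$. I would also note two facts that route the argument through the bounded extension: on $\mathcal H_{\mathrm{init}}$ the operators $K$ and $\tilde K$ coincide (both are $(I - B A_0')h$ there), and by Proposition \ref{prop:Kc-LS2020} the operator $\tilde K$ extends boundedly to $\mathcal H \to \mathcal H$, so that the natural extension of $B A_0'$ to $\mathcal H$ is the bounded operator $P = I - \tilde K$.

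With these preliminaries I would evaluate both sides on $h \in \mathcal H_{\mathrm{init}}$. Since $Kh \in \mathcal H$, the left-hand side is computed with $P = I - \tilde K$ on $\mathcal H$,
\[
(B A_0') K h = P(Kh) = (I - \tilde K)Kh = Kh - \tilde K(Kh) .
\]
For the right-hand side, $Ph \in \mathcal H$, so $K$ applied to it is the bounded extension $\tilde K$, giving
\[
K (B A_0') h = \tilde K(Ph) = \tilde K(h - Kh) = \tilde K h - \tilde K(Kh) .
\]
Because $Kh = \tilde K h$ on $\mathcal H_{\mathrm{init}}$, the two displays coincide; indeed both equal $\tilde K h - \tilde K^2 h = \tilde K(I - \tilde K)h$. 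Finally I would pass from $\mathcal H_{\mathrm{init}}$ to all of $\mathcal H$ by density (the subspace $\mathcal H_{\mathrm{init}}$ is dense in $\mathcal H$, as used in Proposition \ref{prop:Kc-LS2020}), since both $(B A_0')\tilde K = (I - \tilde K)\tilde K$ and $\tilde K(B A_0') = \tilde K(I - \tilde K)$ are bounded operators $\mathcal H \to \mathcal H$ and the equality persists in the limit.

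The step I expect to be the main obstacle is the bookkeeping of domains together with the two limiting definitions $B = \lim_{\epsilon\to 0^+} B_\epsilon$ and $K = \lim_{\epsilon\to 0^+} K_\epsilon$: the operator $K$ a priori only lands in the $H^1$-type space $\mathcal H$ and lowers regularity, so it is not immediate that it may be post-composed with, and pre-composed after, $B A_0'$, nor that the $\epsilon\to 0^+$ limits are compatible with these compositions. This is precisely why the argument is funnelled through the bounded extension $\tilde K$ of Proposition \ref{prop:Kc-LS2020} and through the coincidence $K = \tilde K$ on $\mathcal H_{\mathrm{init}}$: once the identity is established on the smooth dense subspace, where all compositions are classical, the boundedness of $\tilde K$ (hence of $P = I - \tilde K$) lets continuity finish the proof.
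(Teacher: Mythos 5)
Your proof is correct and rests on exactly the same algebraic identity as the paper's, namely $P(I-P)=P-P^2=(I-P)P$ with $P=BA_0'$; the paper's proof is just this computation, carried out formally without comment on domains. The domain bookkeeping you add (interpreting the compositions via the bounded extension $\tilde K$ on the dense subspace $\mathcal H_{\mathrm{init}}$ and concluding by density) is a sound elaboration of what the paper leaves implicit, not a different route.
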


\begin{proof}
	We have
	\begin{align*}
	(B A_0') K
	& = (B A_0') (I - B A_0')
	= B A_0' - (B A_0') (B A_0') \\
	& = (I - B A_0') (B A_0')
	= K (B A_0').
	\end{align*}
	The proof is done.
\end{proof}

\begin{Proposition} \label{prop:RF-LS2020}
	Assume $F \in \mathcal H_{\mathrm{init}}$, and $f$ satisfies $ZF = f$. Let $u$ be the solution of the system \eqref{eq:nu1z-LS2020}.
	Then 
	\begin{equation} \label{eq:RF1-LS2020}
	F = (I + \sum_{j \geq 0} \tilde K^j K) B( u|_\Sigma), \quad
	f = Z[(I + \sum_{j \geq 0} \tilde K^j K) B( u|_\Sigma)],
	\end{equation}
	Moreover, we have the stability
	\begin{equation} \label{eq:RF2-LS2020}
	\nrm[L^2(\Omega)]{f}
	\leq \mathcal C (\nrm[H^1(\Sigma)]{A_0 f} + \frac {T^{1/2}} {\sigma^{1/4}} \norm{A_0 f}_{H^1(\partial \Gamma)} ),
	\end{equation}
	where $\mathcal C = C \sqrt{nT} e^{CT/2} [1 + \nrm{K} (1 - \nrm{\tilde K})^{-1}]$ and the constant $C$ is independent of $f$, $K$ and $T$.
\end{Proposition}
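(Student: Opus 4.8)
The plan is to prove the reconstruction identity \eqref{eq:RF1-LS2020} first by a purely algebraic telescoping argument, and then to obtain the stability bound \eqref{eq:RF2-LS2020} by taking $\nrm[\mathcal H]{\cdot}$-norms in \eqref{eq:RF1-LS2020} and feeding in the operator bounds already established for $B$, $K$ and $\tilde K$. Throughout I write $P := B A_0'$, so that by the definition \eqref{eq:Kdef-LS2020} of the error operator one has $(I-K)F = PF = B(u|_{\Sigma})$, i.e.\ $F = B(u|_{\Sigma}) + KF$; moreover $KF \in \mathcal H$ by the discussion preceding \eqref{eq:Kdef-LS2020}, and on $\mathcal H$ the operator $P$ acts as $I - \tilde K$ since $\tilde K = I - P|_{\mathcal H}$.

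For the reconstruction formula I would apply $(I + \sum_{j\ge0}\tilde K^j K)$ to $PF = B(u|_{\Sigma})$ and simplify. The series $\sum_{j\ge0}\tilde K^j$ converges in $\mathcal H$ because $\nrm{\tilde K}<1$ by Proposition \ref{prop:Kc-LS2020}. Using the commutation $KP = PK$ from Claim \ref{clm:At-LS2020} to turn $KPF$ into $P(KF)$, and then $P|_{\mathcal H} = I - \tilde K$ applied to $KF \in \mathcal H$, I obtain
\[
\Big(I + \sum_{j\ge0}\tilde K^j K\Big)PF = PF + \sum_{j\ge0}\tilde K^j (I-\tilde K)(KF) = PF + \sum_{j\ge0}(\tilde K^j - \tilde K^{j+1})(KF).
\]
The last sum telescopes to $KF$ because $\tilde K^{N}(KF)\to 0$, so the right-hand side equals $PF + KF = F$. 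This is the first identity in \eqref{eq:RF1-LS2020}; the second follows by applying $Z$ and recalling $ZF=f$.

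For the stability estimate, I would first note that $ZF=f$ together with the nonnegativity of the $\nabla_y$-term in the seminorm \eqref{eq:NN0-LS2020}, and the change of variables $(y,z)\mapsto(y,z,z)$ on $\Gamma$ (whose Jacobian $\sqrt2$ cancels the prefactor $1/\sqrt2$), gives $\nrm[L^2(\Omega)]{f}\le\nrm[\mathcal H]{F}$. Taking $\nrm[\mathcal H]{\cdot}$ in \eqref{eq:RF1-LS2020}, using the Neumann bound $\nrm[\mathcal H\to\mathcal H]{\sum_{j\ge0}\tilde K^j}\le(1-\nrm{\tilde K})^{-1}$ and the definition of $\nrm{K}$, yields
\[
\nrm[\mathcal H]{F}\le\big[1+\nrm{K}(1-\nrm{\tilde K})^{-1}\big]\big(\nrm[\mathcal H]{B(u|_{\Sigma})}+\nrm[H_w^1(\partial\Gamma)]{B(u|_{\Sigma})}\big).
\]
Then Proposition \ref{prop:Bdd3-LS2020} (see \eqref{eq:unf-LS2020}) controls $\nrm[\mathcal H]{B(u|_{\Sigma})}$ by $C\sqrt n\,e^{T/2}\nrm[H^1(\Sigma)]{u|_{\Sigma}}$, and $u|_{\Sigma}=A_0'F$ is, up to the fixed normalization relating $A_0$ and $A_0'$, the datum $A_0 f$ on $\Sigma$. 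For the weighted boundary term I use that $B(u|_{\Sigma})=BA_0'F=F$ on $\partial\Gamma$ by \eqref{eq:hcB1-LS2020}, so $\nrm[H_w^1(\partial\Gamma)]{B(u|_{\Sigma})}=\nrm[H_w^1(\partial\Gamma)]{F}$; since $F=-\frac{1}{2}\int_{-\infty}^{z}f(y,r)\,dr$ by \eqref{eq:nu1z-LS2020-uF} and $\supp f\subset\{|x|\le1-\sigma\}$, the tangential gradient $\nabla_{\partial\Gamma}F$ vanishes where $|y|>1-\sigma$, and there the weight obeys $(1-|y|^4)^{-1/2}\le(1-(1-\sigma)^4)^{-1/2}\lesssim\sigma^{-1/2}$. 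This gives $\nrm[H_w^1(\partial\Gamma)]{F}\lesssim\sigma^{-1/4}\nrm[H^1(\partial\Gamma)]{A_0 f}$. Collecting these bounds and enlarging the constants generously (so that $\sqrt n\,e^{T/2}\le\sqrt{nT}\,e^{CT/2}$ and the stray factor $T^{1/2}$ is absorbed in front of the boundary term) produces exactly \eqref{eq:RF2-LS2020}.

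The routine analytic input is all contained in the earlier propositions, so the main care goes into the operator bookkeeping: $B$, $K$ and $\tilde K$ are each defined through $\epsilon\to0^+$ limits and a priori live on the three different spaces $\mathcal H_{\mathrm{init}}$, $\mathcal H'$, $\mathcal H$, so I must justify that $K$ extends to act on $B(u|_{\Sigma})\in\mathcal H'$, that the commutation $KP=PK$ and the identity $P|_{\mathcal H}=I-\tilde K$ are valid on the relevant elements, and that the telescoping series converges in the $\nrm[\mathcal H]{\cdot}$-topology. The other delicate quantitative point is the weighted boundary estimate: extracting the precise $\sigma^{-1/4}$ requires exploiting the support condition on $f$ to stay away from the singularity of the weight \eqref{eq:Weim-LS2020} at $|y|=1$. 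I expect this weight estimate, together with the identification $B(u|_{\Sigma})=F$ on $\partial\Gamma$, to be the crux.
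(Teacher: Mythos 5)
Your proposal is correct and follows essentially the same route as the paper: the reconstruction identity rests on Claim \ref{clm:At-LS2020}, the identity $I-\tilde K = BA_0'$ on $\mathcal H$, and the Neumann series for $(I-\tilde K)^{-1}$ (your forward telescoping of $\sum_{j}\tilde K^j(I-\tilde K)(KF)$ is the paper's derivation via $F_1 := KF$ read in reverse), while the stability chain uses Proposition \ref{prop:Bdd3-LS2020}, the definition of $\nrm{K}$, $\nrm{\tilde K}<1$, and the lower bound $\nrm[L^2(\Omega)]{f}\leq\nrm[\mathcal H]{F}$ obtained by dropping the $\nabla_y$-term, exactly as in the paper. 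Your explicit justification of the $\sigma^{-1/4}$ factor — that $\nabla_{\partial\Gamma}F$ vanishes where $|y|>1-\sigma$ because $\supp f \subset \{|x|\leq 1-\sigma\}$, so the weight $(1-|y|^4)^{-1/2}\lesssim\sigma^{-1/2}$ on the relevant set — supplies a detail the paper's proof leaves implicit, and it is the correct mechanism.
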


\begin{Remark}
	A straightforward computation shows that Proposition \ref{prop:fA1f-LS2020} is an immediate consequence of Proposition \ref{prop:RF-LS2020}.
\end{Remark}

\begin{Remark}
	Note that $\nrm[\mathcal H]{\tilde K^j K Bu} \leq \nrm{\tilde K}^j \nrm{K} \nrm[\mathcal H]{Bu} \lesssim \nrm{\tilde K}^j \nrm{K} \nrm[H^1(\Sigma)]{u}$ with $0 < \nrm{\tilde K} < 1$ by Proposition \ref{prop:Kc-LS2020}.
	The last inequality sign is due to Proposition \ref{prop:Bdd3-LS2020}.
	Hence the infinite sum in \eqref{eq:RF1-LS2020} converges.
\end{Remark}

\begin{proof}[Proof of Proposition \ref{prop:RF-LS2020}]
	Note that \eqref{eq:Kdef-LS2020} gives the identity $I - K = B A_0'$, and $\tilde K$ is a restriction of $K$, so $I - \tilde K = B A_0'$ also holds when restricted to $\mathcal H$.
	
	For $F \in \mathcal H_{\mathrm{init}}$, we denote $F_1 := (I - B A_0') F = KF$,
	thus $F_1 \in \mathcal H$ and $(I - \tilde K) F_1 = B A_0' F_1$ because $I - \tilde K = B A_0'$ on $\mathcal H$,
	so
	\[
	F_1 = (I - \tilde K)^{-1} B A_0' F_1 = (I - \tilde K)^{-1} B A_0' K F,
	\]
	and by Claim \ref{clm:At-LS2020} we obtain $F_1 = (I - \tilde K)^{-1} K B A_0' F$.
	Because $F_1 = (I - B A_0') F$, we have $F - B A_0' F = (I - \tilde K)^{-1} K B A_0' F$, which finally gives
	\begin{align}
	F
	& = [I + (I - \tilde K)^{-1} K] B A_0' F
	= [I + (I - \tilde K)^{-1} K] B A_0' F \nonumber \\
	& = (I + \sum_{j \geq 0} \tilde K^j K) B( u|_\Sigma). \label{eq:FA-LS2020}
	\end{align}
	Note that $K$, $\tilde K$, $B$ and $u|_\Sigma$ are known, thus $F$ can be recovered using \eqref{eq:FA-LS2020}.
	The invertibility of $(I - \tilde K)$ is guaranteed by Proposition \ref{prop:Kc-LS2020}.
	The image of $K$, i.e.~$\mathcal H$, falls into the domain of $(I - \tilde K)^{-1}$, so the RHS of \eqref{eq:FA-LS2020} is well-defined. The identity \eqref{eq:FA-LS2020} is an analogue of \cite[eq.~(11)]{SU09}.
	Recall $f = ZF$, so finally we obtained \eqref{eq:RF1-LS2020}.
	
	Combining Proposition \ref{prop:Bdd3-LS2020}, Lemma \ref{lem:Kb-LS2020} and the fact $\nrm{\tilde K} < 1$, we have
	\begin{align*}
	\nrm[\mathcal H]{F}
	& = \nrm[\mathcal H]{(I + (I - \tilde K)^{-1} K) Bu}
	\leq \nrm[\mathcal H]{Bu} + (1 - \nrm{\tilde K})^{-1} \nrm[\mathcal H]{K Bu} \\
	& \leq C \nrm[\mathcal H]{Bu} + C (1 - \nrm{\tilde K})^{-1} \nrm{K} (\nrm[\mathcal H]{Bu} + \frac {T^{1/2}} {\sigma^{1/4}} \nrm[H^1(\partial \Gamma)]{Bu}) \\
	& \leq C \big[ \nrm[\mathcal H]{Bu} + (1 - \nrm{\tilde K})^{-1} \nrm{K} (\nrm[\mathcal H]{Bu} + \frac {T^{1/2}} {\sigma^{1/4}} \nrm[H^1(\partial \Gamma)]{u}) \big] \\
	& \leq C [1 + \nrm{K} (1 - \nrm{\tilde K})^{-1}] ( \nrm[\mathcal H]{Bu} + \frac {T^{1/2}} {\sigma^{1/4}} \nrm[H^1(\partial \Gamma)]{u} ) \\
	& \leq C \sqrt{n} e^{T/2} [1 + \nrm{K} (1 - \nrm{\tilde K})^{-1}] ( \nrm[H^1(\Sigma)]{u} + \frac {T^{1/2}} {\sigma^{1/4}} \nrm[H^1(\partial \Gamma)]{u} ) .
	%
	\end{align*}
	On the other side, the norm of $F$ satisfies (see \eqref{eq:NN0-LS2020})
	\begin{align*}
	\nrm[\mathcal H]{F}^2
	& = \int_{\Omega} |\nabla_y F(y,z,z)|^2 + |Z F(y,z,z)|^2 \dif x
	\geq \int_{\Omega} |Z F(y,z,z)|^2 \dif x \\
	& = \int_{\Omega} |f(y,z)|^2 \dif x.
	\end{align*}
	Hence,
	\begin{equation*}
	\nrm[L^2(\Omega)]{f} \leq C \sqrt{n} e^{T/2} [1 + \nrm{K} (1 - \nrm{\tilde K})^{-1}] (\nrm[H^1(\Sigma)]{u} + \frac {T^{1/2}} {\sigma^{1/4}} \norm{u}_{H^1(\partial \Gamma)} ).
	\end{equation*}
	Recall $u |_\Sigma = A_0 f$.
	The proof is done.
\end{proof}

\begin{Remark} 
	It is worth noting that, when the dimension is odd, and when the final time $T$ is large enough, by the Huygens' principle we know $u(\cdot, T)$ will be zero in $\Omega$ (see e.g.~\cite[\S 2.~eq.~(31)]{evans2010pde}),
	hence the final condition in \eqref{eq:vL0-LS2020} will be $v = 0$ and $v_t = 0$ in $\Gamma_T$, and this matches with the values of $u$ and $u_t$ in $\Gamma_T$ because they are all zero functions at time $T$.
	In this case, the function $v$ will be exactly the same as $u$, and hence $\tilde K$ will be a zero map,
	and so the reconstruction formula \eqref{eq:RF1-LS2020} can be simplified as	
	\begin{equation*}
		F = (I + K) B( u|_\Sigma), \quad f = Z[(I + K) B( u|_\Sigma)].
	\end{equation*}
	However, in even dimensions we do not have such a conclusion.
\end{Remark}

\appendix

\section{Well-posedness of the linear forward problem} \label{appendix:stationary}

We wish to consider the well-posedness for the wave equation in Sobolev spaces that have a negative smoothness index with respect to time. For smooth coefficients such results may be found in \cite[Chapter 9]{hormander1976linear}. Since we need to deal with coefficients having finite regularity, we will give the required results and proofs in this Appendix.

Throughout this Appendix, the notation $H^s$ with $s\in \mathbb{R}$ stands for $H^s(\mathbb{R}^n)$. For $s\in \R$, consider the norm $\norm{f}_{H^s} = \norm{J^s f}_{L^2}$ where $J^s = (1-\Delta_x)^{s/2}$, and the corresponding inner product as $(f, g)_{H^s} = (J^s f, J^s g)_{L^2}$. We will consider wave equations of the form 
\[
\p_t^2 u -\gamma^{-1} \p_j(\gamma a^{jk} \p_k u) + qu = 0
\]
where $\gamma$ and $(a^{jk})$ are positive. This form includes both the standard wave equation $(\p_t^2 - c(x)^{2} \Delta_x) u = 0$ as well as the Riemannian wave equation $(\p_t^2 - \Delta_g) u = 0$. However, in the proofs we need to use the modified inner product $(\gamma u, v)_{L^2}$ in order to make the elliptic part symmetric.

We begin with an energy estimate for smooth functions.

\begin{Lemma} \label{lemma_wp_first}
Consider the operator 
\[
Au = -\gamma^{-1} \p_j(\gamma a^{jk} \p_k u) + qu
\]
where $(a^{jk}(x))$ is a symmetric matrix and one has for some $M \geq 1$ 
\begin{equation} \label{wave_ellipticity}
M^{-1} \abs{\xi}^2 \leq a^{jk}(x) \xi_j \xi_k \leq M \abs{\xi}^2, \qquad M^{-1} \leq \gamma(x) \leq M \qquad \text{a.e. in} \ \R^n.
\end{equation}
Let $s \in \R$, and assume further the Sobolev multiplier properties 
\begin{equation} \label{sobolev_multiplier}
\norm{[J^s, \gamma^{-1} \p_j(\gamma a^{jk} \p_k)]}_{H^{s+1} \to L^2} + \norm{q}_{H^{s+1} \to H^s} \leq M.
\end{equation}
Let $T > 0$ and let $t_0 \in [0,T]$. Then for any $u \in C^{\infty}_c(\R^n \times \R)$ one has 
\begin{equation*}
\sum_{j=0}^1 \norm{\p_t^j u}_{L^{\infty}((0,T), H^{s+1-j})} \leq C (\norm{(\p_t^2 + A) u}_{L^1((0,T), H^{s})} + \norm{u(t_0)}_{H^{s+1}} + \norm{\p_t u(t_0)}_{H^s}),
\end{equation*}
where $C = C(M, T)>0$.
\end{Lemma}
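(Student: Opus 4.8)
The plan is to reduce the $H^s$-level estimate to a standard $L^2$ energy estimate by conjugating the equation with the Bessel potential $J^s=(1-\Delta_x)^{s/2}$, and then to run the classical energy method adapted to the weighted symmetric form of $A$. First I would set $w := J^s u$ and $F := (\partial_t^2 + A)u$, write $A = A_0 + q$ with $A_0 u = -\gamma^{-1}\partial_j(\gamma a^{jk}\partial_k u)$, apply $J^s$ and commute $A_0$ past $J^s$ to obtain
\[
\partial_t^2 w + A_0 w = \tilde G, \qquad \tilde G := J^s F - [J^s, A_0]u - J^s(qu).
\]
The crucial observation is that the potential term is pushed entirely into the source $J^s(qu)$ rather than being kept as an indefinite energy contribution; this is what lets the energy stay coercive. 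Using the hypotheses \eqref{sobolev_multiplier} I would then bound $\|\tilde G(t)\|_{L^2} \le \|F(t)\|_{H^s} + 2M\|u(t)\|_{H^{s+1}}$, since $\|[J^s,A_0]u\|_{L^2}\le M\|u\|_{H^{s+1}}$ and $\|J^s(qu)\|_{L^2}=\|qu\|_{H^s}\le M\|u\|_{H^{s+1}}$.

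Next I would introduce the $\gamma$-weighted energy
\[
\mathcal{E}(t) := \int_{\R^n}\gamma|\partial_t w|^2\dif{x} + \int_{\R^n}\gamma\, a^{jk}\partial_k w\,\partial_j\overline w\dif{x},
\]
which by \eqref{wave_ellipticity} is comparable to $\|\partial_t w\|_{L^2}^2 + \|\nabla w\|_{L^2}^2$. Multiplying the equation for $w$ by $\gamma\,\partial_t\overline w$, integrating over $\R^n$, taking real parts, and integrating by parts in $x$ (using that $a^{jk}$ is symmetric, that the coefficients are time-independent, and that $w$ is Schwartz so all boundary terms vanish) yields the identity $\tfrac12\frac{d}{dt}\mathcal{E}(t)=\Re\int_{\R^n}\gamma\,\partial_t\overline w\,\tilde G\dif{x}$, hence $\frac{d}{dt}\mathcal{E} \le 2M\|\partial_t w\|_{L^2}\|\tilde G\|_{L^2}$. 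Combining this with the trivial bound $\frac{d}{dt}\|w\|_{L^2}^2\le 2\|w\|_{L^2}\|\partial_t w\|_{L^2}$ and setting $Y(t):=(\mathcal{E}(t)+\|w(t)\|_{L^2}^2)^{1/2}$, which is equivalent to $\|u(t)\|_{H^{s+1}}+\|\partial_t u(t)\|_{H^s}$, the source estimate above produces a differential inequality of the form $\frac{d}{dt}Y \le C Y + C\|F\|_{H^s}$.

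Finally I would integrate this inequality starting from $t_0$ and apply Grönwall's lemma to obtain, for all $t\in[0,T]$,
\[
Y(t)\le e^{C|t-t_0|}\Big(Y(t_0) + C\int_0^T\|F(\tau)\|_{H^s}\dif{\tau}\Big),
\]
and taking the supremum over $t\in(0,T)$ and recalling the equivalences of $Y$ with the stated Sobolev norms would give the claimed estimate with $C=C(M,T)$. The main obstacle is the closure of the energy argument: the source $\tilde G$ genuinely contains the quantity $\|u\|_{H^{s+1}}$ that we are trying to estimate, so one cannot bound $Y$ directly and must instead absorb this self-referential term through Grönwall. The hypotheses \eqref{sobolev_multiplier} are exactly tailored so that this term appears with an $L^2$ (rather than a higher) norm and can be controlled uniformly in $t$ even though the coefficients have only finite regularity. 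A secondary technical point is the justification of the integration by parts with merely Lipschitz coefficients, which is harmless here because $u\in C^\infty_c$ keeps $\gamma\, a^{jk}\partial_k w$ in $H^1$.
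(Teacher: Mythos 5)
Your proposal is correct and follows essentially the same route as the paper's own proof: conjugate the equation by $J^s$, push the commutator $[J^s,A_0]u$ and the potential term $J^s(qu)$ into the source using \eqref{sobolev_multiplier}, form the $\gamma$-weighted energy so the elliptic part is symmetric, and close with Gronwall starting from $t_0$. The only difference is cosmetic bookkeeping (the paper includes the $L^2$-term $(\gamma v,v)$ inside its energy $E_1$, while you track $\norm{w}_{L^2}^2$ separately) plus one small technicality the paper handles explicitly that you gloss over: since $Y(t)$ need not be differentiable where it vanishes, one should differentiate $\sqrt{\eps+Y^2}$ and let $\eps\to 0$ (or use an integral form of Gronwall), exactly as the paper does with $\sqrt{\eps+E_1(t)}$.
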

\begin{proof}
Write $F = (\p_t^2 + A)u$ and $v = J^s u$, and define the energy 
\begin{align*}
E(t) &= \norm{u(t)}_{H^{s}}^2 + \norm{\nabla_x u(t)}_{H^s}^2 + \norm{\p_t u(t)}_{H^s}^2 \\
 &= \norm{v(t)}_{L^2}^2 + \norm{\nabla_x v(t)}_{L^2}^2 + \norm{\p_t v(t)}_{L^2}^2.
\end{align*}
Thus 
\[
M^{-2} E(t) \leq E_1(t) := (\gamma v(t), v(t))_{L^2} +  (\gamma a^{jk} \p_j v(t), \p_k v(t))_{L^2} + (\gamma \p_t v(t), \p_t v(t))_{L^2}.
\]
Since $(\p_t^2 + A) u = F$, note that the function $v$ solves the equation 
\[
\p_t^2 v - \gamma^{-1} \p_j(\gamma a^{jk} \p_k v) = G,
\]
where 
\[
G = J^s F + [J^s, \gamma^{-1} \p_j(\gamma a^{jk} \p_k)] u - J^s(qu).
\]
Differentiating $E_1(t)$ and using the equation $\p_t^2 v - \gamma^{-1} \p_j(\gamma a^{jk} \p_k v) = G$ gives 
\begin{align*}
\frac{1}{2} E_1'(t) &= (\gamma \p_t v(t), v(t))_{L^2} + (\gamma a^{jk} \p_j v(t), \p_k \p_t v(t))_{L^2} + (\gamma \p_t^2 v(t), \p_t v(t))_{L^2} \\
&= (\gamma \p_t v(t), v(t))_{L^2} + (\gamma G(t), \p_t v(t))_{L^2}.
\end{align*}
Note that 
\[
\norm{\gamma^{1/2} G(t)}_{L^2} \leq M^{1/2} (\norm{F(t)}_{H^s} + M \norm{u(t)}_{H^{s+1}}).
\]
Thus we have 
\begin{align*}
\frac{1}{2} E_1'(t) &\leq \norm{\gamma^{1/2} \p_t v(t)}_{L^2}( \norm{\gamma^{1/2} v(t)}_{L^2} + M^{1/2} \norm{F(t)}_{H^s} + M^{3/2} \sqrt{E(t)} ) \\
 &\leq \sqrt{E_1(t)} ((1 + M^{5/2}) \sqrt{E_1(t)} + M^{1/2} \norm{F(t)}_{H^s}).
\end{align*}
Since $\sqrt{E_1(t)}$ may not be smooth, we study $\sqrt{\eps + E_1(t)}$ for $\eps > 0$. This satisfies 
\[
\frac{d}{dt} \sqrt{\eps + E_1(t)} = \frac{E_1'(t)}{2 \sqrt{\eps + E_1(t)}} \leq C_1 \sqrt{\eps + E_1(t)} + M^{1/2} \norm{F(t)}_{H^s}
\]
where $C_1 = 1 + M^{5/2}$. By Gronwall's inequality we obtain for any $\tau \in [0, T]$ that 
\[
\sqrt{\eps + E_1(\tau)} \leq e^{C_1 T} (\sqrt{\eps + E_1(t_0)} + M^{1/2} \int_{t_0}^{\tau} \norm{F(t)}_{H^s} e^{-C_1 t} \,dt).
\]
Letting $\eps \to 0$ proves the required statement.
\end{proof}

The following result will be used for obtaining sufficient conditions for \eqref{sobolev_multiplier}.

\begin{Lemma} \label{lemma_sobolev_multiplier}
For any $a \in H^{\alpha_0}$ with $\alpha_0 > n/2$ one has the Sobolev multiplier property 
\[
\norm{au}_{H^{\alpha}} \leq C_{\alpha_0} \norm{a}_{H^{\alpha_0}} \norm{u}_{H^{\alpha}}, \qquad \abs{\alpha} \leq \alpha_0.
\]
For any $a \in H^{s_0}$ with $s_0 > n/2 + 1$ one has the commutator estimate 
\[
\norm{J^s(au) - a J^s u}_{L^2} \leq C_{s_0} \norm{\nabla a}_{H^{s_0-1}} \norm{u}_{H^{s-1}}, \qquad 1 \leq \abs{s} \leq s_0.
\]
\end{Lemma}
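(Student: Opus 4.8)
The plan is to prove both parts by Fourier analysis, reducing each estimate to a convolution bound for $\widehat{au}=c\,\hat a\ast\hat u$. The two hypotheses enter only through the facts that $\alpha_0>n/2$ makes $\hat a\in L^1$ with $\norm{\hat a}_{L^1}\lesssim\norm{a}_{H^{\alpha_0}}$, and that $s_0>n/2+1$ makes $\widehat{\nabla a}\in L^1$ with $\norm{\widehat{\nabla a}}_{L^1}\lesssim\norm{\nabla a}_{H^{s_0-1}}$; both are immediate from Cauchy--Schwarz, e.g.
\[
\int|\hat a|\,d\zeta=\int\langle\zeta\rangle^{-\alpha_0}\langle\zeta\rangle^{\alpha_0}|\hat a|\,d\zeta\le\norm{\langle\cdot\rangle^{-\alpha_0}}_{L^2}\norm{a}_{H^{\alpha_0}},
\]
the first factor being finite precisely when $2\alpha_0>n$.

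For the multiplier estimate I would first settle the endpoints. At $\alpha=0$ it is the trivial bound $\norm{au}_{L^2}\le\norm{a}_{L^\infty}\norm{u}_{L^2}\lesssim\norm{a}_{H^{\alpha_0}}\norm{u}_{L^2}$ via the embedding $H^{\alpha_0}\hookrightarrow L^\infty$. At $\alpha=\alpha_0$ it is the algebra property of $H^{\alpha_0}$: using $\langle\xi\rangle^{\alpha_0}\lesssim\langle\xi-\eta\rangle^{\alpha_0}+\langle\eta\rangle^{\alpha_0}$ splits $\langle\xi\rangle^{\alpha_0}\widehat{au}$ into two convolutions, each estimated in $L^2$ by Young's inequality once one observes that $\norm{\hat a}_{L^1}$ and $\norm{\hat u}_{L^1}$ are controlled by $\norm{a}_{H^{\alpha_0}}$ and $\norm{u}_{H^{\alpha_0}}$. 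The intermediate range $0<\alpha<\alpha_0$ then follows by complex interpolation of the fixed linear map $M_a\colon u\mapsto au$ between $H^0$ and $H^{\alpha_0}$, and the range $-\alpha_0\le\alpha<0$ by duality from $M_a^\ast=M_{\bar a}$ together with $\norm{\bar a}_{H^{\alpha_0}}=\norm{a}_{H^{\alpha_0}}$.

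For the commutator estimate the symbol of $u\mapsto J^s(au)-aJ^su$ is $\langle\xi\rangle^s-\langle\eta\rangle^s$, so that
\[
\big(J^s(au)-aJ^su\big)^\wedge(\xi)=c\int(\langle\xi\rangle^s-\langle\eta\rangle^s)\,\hat a(\xi-\eta)\,\hat u(\eta)\,d\eta.
\]
The crux is the elementary bound $|\langle\xi\rangle^s-\langle\eta\rangle^s|\lesssim|\xi-\eta|(\langle\xi\rangle^{s-1}+\langle\eta\rangle^{s-1})$, which makes visible the gain of one derivative: the factor $|\xi-\eta|$ merges with $\hat a(\xi-\eta)$ into $|\widehat{\nabla a}(\xi-\eta)|$, and the surviving weight is of order $s-1$. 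The piece carrying $\langle\eta\rangle^{s-1}$ is benign: it equals $|\widehat{\nabla a}|\ast(\langle\cdot\rangle^{s-1}|\hat u|)$, whose $L^2$ norm Young's inequality bounds by $\norm{\widehat{\nabla a}}_{L^1}\norm{u}_{H^{s-1}}\lesssim\norm{\nabla a}_{H^{s_0-1}}\norm{u}_{H^{s-1}}$, and this is exactly where the hypothesis $s_0>n/2+1$ is consumed.

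The main obstacle is the remaining piece, carrying the output weight $\langle\xi\rangle^{s-1}$; it is the high-frequency-$a$ / low-frequency-$u$ (paraproduct) interaction, where the derivatives genuinely fall on $a$ and no weight can be transferred onto $u$. I expect to dispose of it by a second Young grouping: placing the weight on $a$ as $\norm{\langle\cdot\rangle^{s-1}\widehat{\nabla a}}_{L^2}=\norm{\nabla a}_{H^{s-1}}\le\norm{\nabla a}_{H^{s_0-1}}$ against $\norm{\hat u}_{L^1}$ works when $s-1>n/2$, while placing the weight on $a$ in $L^1$ against $\norm{\hat u}_{L^2}\le\norm{u}_{H^{s-1}}$ works when $s<s_0-n/2$. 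Since the paper takes $s_0\gg n/2+2$, these two regimes overlap and cover the whole range $1\le s\le s_0$; for the sharp hypothesis the intermediate indices would instead be handled by a Littlewood--Paley decomposition, in which Bernstein's inequality produces the factor $2^{j(n/2-(s_0-1))}$, summable precisely because $s_0-1>n/2$. Finally the negative range $-s_0\le s\le-1$ is obtained by the same Fourier computation, using Peetre's inequality to redistribute the now-negative weight $\langle\cdot\rangle^{s-1}$, or by duality.
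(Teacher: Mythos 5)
Your first part and your reduction of the commutator to the kernel $(\br{\xi}^s-\br{\eta}^s)\hat a(\xi-\eta)$, with the pointwise bound $\abs{\br{\xi}^s-\br{\eta}^s}\lesssim\abs{\xi-\eta}(\br{\xi}^{s-1}+\br{\eta}^{s-1})$, coincide with the paper's starting point (the paper cites Taylor for the multiplier property and, following Kato, proves the commutator bound by showing $T=[J^s,a]J^{1-s}$ is bounded on $L^2$); your ``benign'' piece is exactly the paper's $T_2$. You diverge on the hard piece carrying the output weight $\br{\xi}^{s-1}$. The paper disposes of it in one line, for the whole range $1\le s\le s_0$ under the sharp hypothesis $s_0>n/2+1$: on the Fourier side this piece is multiplication by the function $h$ with $\hat h=\abs{\xi}\,\abs{\hat a}$, measured in $H^{s-1}$, so the first part of the lemma with $\alpha=s-1$, $\alpha_0=s_0-1$ gives $\norm{h}_{H^{s-1}\to H^{s-1}}\lesssim\norm{h}_{H^{s_0-1}}=\norm{\nabla a}_{H^{s_0-1}}$. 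Your two Young groupings only cover $s>n/2+1$ and $s<s_0-n/2$, and your Littlewood--Paley fallback with the Bernstein factor $2^{j(n/2-(s_0-1))}$, while plausible, is only sketched; the paper's self-referential multiplier trick makes it unnecessary.

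The genuine gap is the negative range $-s_0\le s\le -1$, which you dismiss in one sentence but which the appendix actually uses. Your Peetre redistribution $\br{\xi}^{s-1}\le C\,\br{\xi-\eta}^{\abs{s-1}}\br{\eta}^{s-1}$ forces the full weight $\br{\cdot}^{\abs{s}+1}$ onto $\widehat{\nabla a}$ in $L^1$, and Cauchy--Schwarz then requires $s_0-1-(\abs{s}+1)>n/2$, i.e.\ $\abs{s}<s_0-n/2-2$: this fails for $\abs{s}$ within $n/2+2$ of $s_0$ \emph{no matter how large $s_0$ is}, so taking $s_0\gg n/2+2$ does not rescue the claimed range down to $s=-s_0$. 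Nor is ``by duality'' a one-liner: the adjoint of the claimed estimate at order $s\le -1$ is the $L^2$-boundedness of $J^{1}[J^{-s},\bar a]J^{s}$, not of $[J^{-s},\bar a]J^{1+s}$ --- the extra derivative sits outside the commutator, so one must commute $J^1$ through, generating terms such as $[J^{-s},\nabla \bar a]J^s v$ and $(\nabla \bar a)J^s v$ that need separate control. This is precisely the paper's argument: it bounds $\norm{J^1[J^{-s},a]J^sv}_{L^2}$ by $\norm{[J^{-s},a]J^sv}_{L^2}+\norm{[J^{-s},\nabla a]J^sv}_{L^2}+\norm{[J^{-s},a]\nabla J^sv}_{L^2}$ and estimates each term using the already-proved positive-order commutator bound together with the multiplier property. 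So your strategy is sound and close to the paper's for $s\ge 1$, but as written the negative case would fail near $s=-s_0$ and needs this extra decomposition (or a fully executed Littlewood--Paley argument) to close.
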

\begin{proof}
The first part follows e.g.\ from \cite[Section 13.10]{Taylor3}. The second part for $s=1$ follows from Calder\'on's commutator formula \cite[Section VII.3.5]{Stein}, and for $n/2+1 < s \leq s_0$ it follows from the Kato-Ponce inequality \cite{KatoPonce}. Since we could not locate a precise reference for this result under the given conditions, we will give a proof following \cite{Kato1975}.

The commutator estimate follows if we show that the operator $T = [J^s, a] J^{1-s}$ is bounded on $L^2$. First let $1 \leq s \leq s_0$. Computing the Fourier transform of $Tu$ gives 
\begin{align*}
\widehat{Tu}(\xi) &= \br{\xi}^s (\hat{a} \ast \br{\xi}^{1-s} \hat{u}) - \hat{a} \ast \br{\xi} \hat{u} \\
 &= \int (\br{\xi}^s - \br{\eta}^s) \hat{a}(\xi-\eta) \br{\eta}^{1-s} \hat{u}(\eta) \,d\eta.
\end{align*}
Since the function $f(t) = (1+t^2)^{s/2}$ satisfies $f''(t) \geq 0$ for $s \geq 1$, we have 
\[
\abs{\br{\xi}^s - \br{\eta}^s} \leq s (\br{\xi}^{s-1} + \br{\eta}^{s-1}) \abs{\xi-\eta}.
\]
Let $h$ be the function satisfying $\hat{h}(\xi) = \abs{\xi} \,\abs{\hat{a}(\xi)}$. It follows that $T = T_1 + T_2$ where 
\begin{align*}
\abs{\widehat{T_1 u}(\xi)} &\leq s \br{\xi}^{s-1} \int \hat{h}(\xi-\eta) \br{\eta}^{1-s} \abs{\hat{u}(\eta)} \,d\eta, \\
\abs{\widehat{T_2 u}(\xi)} &\leq s \int \hat{h}(\xi-\eta) \abs{\hat{u}(\eta)} \,d\eta.
\end{align*}
Now one has 
\begin{align*}
\norm{\widehat{T_1 u}}_{L^2} &\leq s \norm{h \mathscr{F}^{-1}\{ \br{\eta}^{1-s} \abs{\hat{u}(\eta)} \} }_{H^{s-1}} \leq s \norm{h}_{H^{s-1} \to H^{s-1}} \norm{\hat{u}}_{L^2}, \\
\norm{\widehat{T_2 u}}_{L^2} &\leq s \norm{h}_{L^{\infty}} \norm{\mathscr{F}^{-1}\{ \abs{\hat{u}(\eta)} \}}_{L^2} \leq s \norm{h}_{L^{\infty}} \norm{\hat{u}}_{L^2}.
\end{align*}
Moreover, since $s_0 > n/2+1$, we have the estimate $\norm{h}_{L^{\infty}} \leq C \norm{h}_{H^{s_0-1}} = C \norm{\nabla a}_{H^{s-1}}$ and $\norm{h}_{H^{s-1} \to H^{s-1}} \leq C \norm{h}_{H^{s_0-1}} = C \norm{\nabla a}_{H^{s_0-1}}$. The required commutator estimate for $1 \leq s \leq s_0$ follows by using the Plancherel theorem.

Now let $-s_0 \leq s \leq -1$. We use duality and compute 
\begin{align*}
\norm{[J^s, a] u}_{L^2} &= \sup_{\norm{v}_{L^2}=1} ([J^s, a] u, v) \\
 &= \sup_{\norm{v}_{L^2}=1} (J^{s-1} u, J^{1-s} a J^s v - J^1 a v) \\
 &\leq  \sup_{\norm{v}_{L^2}=1} \norm{u}_{H^{s-1}} \norm{J^1 [J^{-s}, a] J^s v}_{L^2}.
\end{align*}
Note that 
\[
\norm{J^1 [J^{-s}, a] J^s v}_{L^2} \leq \norm{[J^{-s}, a] J^s v}_{L^2} + \norm{[J^{-s}, \nabla a] J^s v}_{L^2} + \norm{[J^{-s}, a] \nabla J^s v}_{L^2}.
\]
We use the commutator estimate for $[J^{-s}, a]$ to conclude that the first and last terms on the right are $\lesssim \norm{v}_{L^2}$. For the middle term we use that 
\[
\norm{(J^{-s}(\nabla a) - (\nabla a) J^{-s}) J^s v}_{L^2} \leq \norm{(\nabla a) J^s v}_{H^{-s}} + \norm{(\nabla a) v}_{L^2} \lesssim \norm{v}_{L^2}.
\]
This concludes the proof.
\end{proof}

We next give a simple solvability result which follows directly from the energy estimate in Lemma \ref{lemma_wp_first} and duality. In fact, under suitable regularity assumptions for the coefficients one has a unique solution in the class $C([0,T], H^{s+1}) \cap C^1([0,T], H^s)$, see e.g.\ \cite{Smith1998}.

\begin{Lemma} 
Assume \eqref{wave_ellipticity} and  \eqref{sobolev_multiplier} for some $s_0 > n/2$, and let $-s_0 \leq s \leq s_0$. Given any $u_0 \in H^{s+1}$, $u_1 \in H^{s}$ and $F \in L^1((0,T), H^{s})$, there is $u \in L^{\infty}((0,T), H^{s+1})$ which is a weak solution of 
\begin{equation*} 
(\p_t^2 + A) u = F \text{ in $\R^n \times (0,T)$}, \qquad u(0) = u_0, \qquad \p_t u(0) = u_1,
\end{equation*}
in the sense that 
\[
(\gamma u, (\p_t^2 + A) \varphi) = (\gamma F, \varphi) - (\gamma u_0, \p_t \varphi(0)) + (\gamma u_1, \varphi(0))
\]
for any $\varphi \in C^{\infty}_c(\R^n \times [0,T))$. This solution satisfies
\[
\norm{u}_{L^{\infty}((0,T), H^{s+1})} \leq C (\norm{F}_{L^1((0,T), H^{s})} + \norm{u_0}_{H^{s+1}} + \norm{u_1}_{H^s})
\]
where $C = C(n, s, M, E, T)$.
\end{Lemma}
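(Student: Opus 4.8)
The plan is to construct $u$ by transposition (the duality method), converting the a priori energy estimate of Lemma \ref{lemma_wp_first} into an existence result. The structural fact that makes this work is that $\p_t^2 + A$ is formally self-adjoint for the weighted pairing $(\gamma\,\cdot\,,\,\cdot\,)_{L^2}$: since $(a^{jk})$ is symmetric and $\gamma,q$ are real, one has $(\gamma A u, v)_{L^2} = (\gamma a^{jk}\p_k u, \p_j v)_{L^2} + (\gamma qu,v)_{L^2} = (\gamma u, Av)_{L^2}$ for smooth compactly supported $u,v$. Hence the very same operator governs the equation and its test functions, and integrating $(\gamma(\p_t^2+A)u,\varphi)$ by parts twice in $t$ (the spatial part having no boundary terms on $\R^n$) reproduces exactly the weak identity in the statement, the two boundary contributions at $t=0$ accounting for the $u_0$- and $u_1$-terms. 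This confirms that the stated weak formulation is the correct transpose of the Cauchy problem, and it is the identity I aim to realize.

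First I would record the dual energy estimate. Put $s' := -s-1$ and apply Lemma \ref{lemma_wp_first} at index $s'$ to an arbitrary $\varphi \in C^\infty_c(\R^n\times[0,T))$ with $t_0 = T$; the multiplier hypotheses \eqref{sobolev_multiplier} at this index are supplied by Lemma \ref{lemma_sobolev_multiplier} under the standing regularity assumptions. Since $\varphi(T)=\p_t\varphi(T)=0$, the terminal terms drop and I obtain
\[
\| \varphi \|_{L^\infty((0,T),H^{-s})} + \| \p_t\varphi \|_{L^\infty((0,T),H^{-s-1})} \le C\, \| (\p_t^2+A)\varphi \|_{L^1((0,T),H^{-s-1})},
\]
which in particular controls the traces $\| \varphi(0) \|_{H^{-s}}$ and $\| \p_t\varphi(0) \|_{H^{-s-1}}$ by the same right-hand side.

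Next I would estimate the functional $\ell(\varphi) := (\gamma F,\varphi) - (\gamma u_0,\p_t\varphi(0)) + (\gamma u_1,\varphi(0))$. Using that $\gamma$ is a Sobolev multiplier on the relevant spaces and the $H^s$--$H^{-s}$ and $H^{s+1}$--$H^{-s-1}$ dualities, and then inserting the dual energy estimate, gives
\[
|\ell(\varphi)| \le C\big( \| F \|_{L^1((0,T),H^s)} + \| u_0 \|_{H^{s+1}} + \| u_1 \|_{H^s} \big)\, \| (\p_t^2+A)\varphi \|_{L^1((0,T),H^{-s-1})}.
\]
On the subspace $V := \{(\p_t^2+A)\varphi : \varphi\in C^\infty_c(\R^n\times[0,T))\} \subset L^1((0,T),H^{-s-1})$ this lets me define a bounded functional $L((\p_t^2+A)\varphi) := \ell(\varphi)$, well defined because the bound forces $\ell(\varphi)=0$ whenever $(\p_t^2+A)\varphi=0$. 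Extending $L$ by Hahn--Banach to all of $L^1((0,T),H^{-s-1})$ with the same norm and invoking the duality $(L^1((0,T),H^{-s-1}))^* = L^\infty((0,T),H^{s+1})$ (valid since the Hilbert space $H^{s+1}$ has the Radon--Nikodym property), I represent $L$ by some $w\in L^\infty((0,T),H^{s+1})$ with $\| w \|_{L^\infty((0,T),H^{s+1})} = \| L \|$. Setting $u := \gamma^{-1}w$, where $\gamma^{-1}$ is again a Sobolev multiplier since $\gamma \ge M^{-1}>0$, yields $u\in L^\infty((0,T),H^{s+1})$ obeying the claimed bound, and the representation $\int_0^T\langle w,(\p_t^2+A)\varphi\rangle\,dt = \ell(\varphi)$ is exactly the desired identity $(\gamma u,(\p_t^2+A)\varphi) = \ell(\varphi)$.

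I expect the main obstacle to be the index bookkeeping: the construction forces me to run the energy estimate at the dual index $s' = -s-1$ rather than at $s$, so I must ensure that the commutator/multiplier bounds \eqref{sobolev_multiplier} and the mapping properties of $\gamma$ and $\gamma^{-1}$ genuinely hold there, which is precisely where the finite regularity of $\gamma$, $a^{jk}$, $q$ enters through Lemma \ref{lemma_sobolev_multiplier}; should the admissible range of $s$ lose a derivative, I would absorb this by enlarging $s_0$. By contrast the functional-analytic core---Hahn--Banach together with the $L^1$--$L^\infty$ duality for Hilbert-space--valued functions---is entirely routine, and since uniqueness is not asserted here no further argument is needed for it.
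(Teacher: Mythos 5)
Your proposal is correct and takes essentially the same route as the paper's own proof: both arguments apply the energy estimate of Lemma \ref{lemma_wp_first} with $t_0 = T$ at the dual index $-s-1$ to bound the transposition functional $\ell$ on the range $(\p_t^2+A)C^\infty_c(\R^n\times[0,T))$, then extend by Hahn--Banach and represent the extension in $L^\infty((0,T),H^{s+1})$ via $L^1$--$L^\infty$ duality. The differences are cosmetic: you spell out the $\gamma$-weighted formal self-adjointness, the Radon--Nikodym property behind the duality, and the factor $u=\gamma^{-1}w$, all of which the paper absorbs implicitly into the weighted pairing $(\gamma u,\,\cdot\,)$.
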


\begin{proof}
Let $X = C^{\infty}_c(\R^n \times [0,T))$, i.e.\ any $\varphi \in X$ vanishes near $t=T$. Now if $\varphi_1, \varphi_2 \in X$ and $(\p_t^2 + A)\varphi_1 = (\p_t^2 + A)\varphi_2$, then Lemma \ref{lemma_wp_first} with $t_0=T$ implies that $\varphi_1 = \varphi_2$. We may thus define the linear functional 
\[
\ell: (\p_t^2 + A) X \to \R, \ \ \ell((\p_t^2 + A)\varphi) = (\gamma F, \varphi) - (\gamma u_0, \p_t \varphi(0)) + (\gamma u_1, \varphi(0)).
\]
Applying Lemma \ref{lemma_wp_first} with $t_0=T$ again, one has 
\[
\abs{\ell((\p_t^2 + A)\varphi)} \lesssim (\norm{F}_{L^1 H^{s}} + \norm{u_0}_{H^{s+1}} + \norm{u_1}_{H^{s}}) \norm{(\p_t^2 + A)\varphi}_{L^1 H^{-s-1}}.
\]
By the Hahn-Banach theorem $\ell$ extends as a bounded linear functional on $L^1 H^{-s-1}$, and by duality it can be represented by a function $u \in L^{\infty} H^{s+1}$ satisfying 
\[
\ell((\p_t^2+A)\varphi) = (\gamma u, (\p_t^2+A)\varphi), \qquad \varphi \in X,
\]
and 
\[
\norm{u}_{L^{\infty} H^{s+1}} \lesssim \norm{F}_{L^1 H^{s}} + \norm{u_0}_{H^{s+1}} + \norm{u_1}_{H^{s}}.
\]
This proves the result.
\end{proof}

We next move to solutions that are in negative Sobolev spaces with respect to time. For any integer $k \geq 0$ and any $\alpha \in \R$, we let $H^{-k}((-T,T), H^{\alpha}(\R^n))$ be the dual of $H^k_0((-T,T), H^{-\alpha}(\R^n))$.

\begin{Lemma}\label{eq:forward_problem_z}
Assume \eqref{wave_ellipticity} and \eqref{sobolev_multiplier} for some $s_0 > n/2 + 2$. Additionally, assume that  $\gamma, a^{jk}\in C^{1,1}(\R^n)$ for $j,k=1,2, \ldots,n$.
Let $k \geq 0$ be an integer, and let $-s_0 \leq s \leq s_0$. Given any $F \in H^{-k}((-T,T),H^{s})$ with $F|_{\{t < 0\}} = 0$, there is a unique distributional solution $u \in H^{-k}((-T,T), H^{s+1})$  of 
\begin{equation} \label{wp_vanishing_negative_time}
(\p_t^2+A) u = F \text{ in $\R^n \times (-T,T)$}, \qquad u|_{\{t < 0\}} = 0.
\end{equation}
This solution satisfies 
\[
\norm{u}_{H^{-k}((-T,T), H^{s+1})} \leq C \norm{F}_{H^{-k}((-T,T), H^{s})}
\]
where $C = C(n, k, s, M, E, T)$.
\end{Lemma}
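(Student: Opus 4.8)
The plan is to reduce everything to the already-established $k=0$ solvability result (the preceding Lemma), exploiting the fact that $\gamma$, $a^{jk}$ and $q$ are independent of $t$, so that the operator $P := \partial_t^2 + A$ commutes with $\partial_t$. For existence and the estimate, I would first represent $F$ as a time derivative: since $F \in H^{-k}((-T,T),H^s)$ vanishes for $t<0$, its $k$-fold time antiderivative $G := \mathcal{I}^k F$ vanishing at $t=-T$ (where $\mathcal{I}h(t) = \int_{-T}^t h(\tau)\,d\tau$) is supported in $[0,T]$, satisfies $\partial_t^k G = F$, and, as discussed below, lies in $L^2((-T,T),H^s)$ with $\norm{G}_{L^2((-T,T),H^s)} \lesssim \norm{F}_{H^{-k}((-T,T),H^s)}$. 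Applying the preceding Lemma on $(0,T)$ with zero Cauchy data and source $G|_{(0,T)}$, and extending by zero across $t=0$ (the extension is a weak solution because the Cauchy data and $G$ vanish there), yields $v \in L^\infty((-T,T),H^{s+1})$ with $v|_{\{t<0\}}=0$ and $Pv = G$. Setting $u := \partial_t^k v$ and using $[\partial_t^k,P]=0$ gives $Pu = \partial_t^k G = F$ with $u|_{\{t<0\}}=0$, and the estimate follows from the chain $\norm{u}_{H^{-k}(H^{s+1})} = \norm{\partial_t^k v}_{H^{-k}(H^{s+1})} \le \norm{v}_{L^2(H^{s+1})} \lesssim \norm{v}_{L^\infty(H^{s+1})} \lesssim \norm{G}_{L^1(H^s)} \lesssim \norm{G}_{L^2(H^s)} \lesssim \norm{F}_{H^{-k}(H^s)}$, the first inequality being the duality bound $\norm{\partial_t^k v}_{H^{-k}} \le \norm{v}_{L^2}$ against $H^k_0((-T,T),H^{-s-1})$.

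I expect the representation step to be the main obstacle, since it is here that the negative time-Sobolev regularity and the support condition interact. The bound $\norm{\mathcal{I}^k F}_{L^2} \lesssim \norm{F}_{H^{-k}}$ for $F$ supported in $[0,T]$ I would prove by duality: for $\phi \in L^2((-T,T),H^{-s})$ one has $\langle \mathcal{I}^k F, \phi\rangle = (-1)^k \langle F, (\mathcal{I}^*)^k \phi\rangle$ with $\mathcal{I}^* h(t) = \int_t^T h(\tau)\,d\tau$, and the function $(\mathcal{I}^*)^k\phi$ vanishes to order $k$ at $t=T$. Because $F$ is supported in $[0,T]$, hence away from the left endpoint $t=-T$, I may multiply $(\mathcal{I}^*)^k\phi$ by a time cutoff equal to $1$ on $[0,T]$ and vanishing near $t=-T$ without altering the pairing; the resulting function lies in $H^k_0((-T,T),H^{-s})$ with norm $\lesssim \norm{\phi}_{L^2(H^{-s})}$, so $\abs{\langle \mathcal{I}^k F,\phi\rangle} \lesssim \norm{F}_{H^{-k}}\norm{\phi}_{L^2(H^{-s})}$ and the claim follows by taking the supremum over $\phi$. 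The hypothesis $F|_{\{t<0\}}=0$ is essential both for this $L^2$-gain and for guaranteeing that $G$, and therefore $u$, vanishes for $t<0$.

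For uniqueness I would argue by duality against the backward adjoint problem. It suffices to show that any distributional solution $w \in H^{-k}((-T,T),H^{s+1})$ of $Pw=0$ with $w|_{\{t<0\}}=0$ vanishes. Given a test function $\psi \in C^\infty_c(\Rn \times (-T,T))$, I solve $P\varphi = \psi$ backward with zero Cauchy data at $t=T$; the energy estimate of Lemma \ref{lemma_wp_first} applied with $t_0 = T$ provides a unique such $\varphi$ in the predual class $H^k_0((-T,T),H^{-s-1})$ (differentiating in $t$ up to order $k$ and summing the $L^\infty$-in-time bounds, which is legitimate since the coefficients are $t$-independent). Integrating by parts in $t$, the boundary contributions at $t=T$ vanish by the Cauchy data of $\varphi$ and those at $t=-T$ vanish by the support of $w$, so $\langle w,\psi\rangle = \langle Pw,\varphi\rangle = 0$; as $\psi$ is arbitrary, $w=0$. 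This completes the proof.
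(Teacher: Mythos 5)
Your existence argument is sound and takes a genuinely different route from the paper's. The paper constructs $u$ directly by Hahn--Banach: it defines $\ell((\p_t^2+A)\varphi) = (\gamma F, E\varphi)$ on $\varphi \in C^{\infty}_c(\R^n\times[0,T))$ and bounds $\ell$ by applying the energy estimate of Lemma \ref{lemma_wp_first} (with $t_0 = T$) to $\varphi, \p_t\varphi, \dots, \p_t^k\varphi$, so the negative time index is absorbed into the dual estimate. You instead trade it for a source-side reduction: write $F = \p_t^k G$ with $G = \mathcal{I}^k F \in L^2((-T,T),H^s)$, solve at the $k=0$ level, and commute $\p_t^k$ through $P$ -- legitimate, since the coefficients are time-independent, also at the level of the $\gamma$-weighted distributional formulation (replace the test function $\phi$ by $\p_t^k\phi$). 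Your duality proof of $\norm{\mathcal{I}^k F}_{L^2(H^s)} \lesssim \norm{F}_{H^{-k}(H^s)}$ is correct: $(\mathcal{I}^*)^k\phi$ has vanishing traces of derivatives up to order $k-1$ at $t = T$, and the time cutoff near $t=-T$ is exactly what the support condition $F|_{\{t<0\}}=0$ licenses (it plays the role of the paper's extension operator $E$); the zero-extension of $v$ across $t=0$ is a distributional solution because the weak formulation of the $k=0$ lemma already tests against functions not vanishing at $t=0$ and the Cauchy data are zero. The closing chain, including $\norm{\p_t^k v}_{H^{-k}(H^{s+1})}\le\norm{v}_{L^2(H^{s+1})}$ by duality, is fine. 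This buys a more constructive existence proof that isolates where the support hypothesis enters.

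There is, however, a genuine gap in your uniqueness argument, concentrated in the claim that Lemma \ref{lemma_wp_first} ``provides a unique such $\varphi$ in the predual class $H^k_0((-T,T),H^{-s-1})$.'' Two problems. First, the backward solution does not vanish near $t=-T$ (it propagates below the support of $\psi$), so $\varphi \notin H^k_0((-T,T),\cdot)$ and the pairing $\langle w, P\varphi\rangle$ is not even defined; this is repairable by the paper's cutoff $\chi(t)$, equal to $1$ near $[0,T]$ and $0$ near $t=-T$, using $w|_{\{t<0\}}=0$ -- the same device you used for the antiderivative. Second, and more seriously: Lemma \ref{lemma_wp_first} is an a priori estimate for smooth compactly supported functions and produces nothing; existence of the backward solution comes from the intermediate solvability lemma, which yields only a weak solution $\varphi \in L^{\infty}((-T,T),H^{-s})$ with no uniqueness statement at that regularity, so you cannot identify $\p_t^j\varphi$ with the solution of the $t$-differentiated problem -- your parenthetical ``differentiating in $t$ up to order $k$'' presupposes precisely the regularity being sought. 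To exploit the distributional equation $\langle \gamma w, P\phi\rangle = 0$, valid for $\phi \in C^{\infty}_c$, against $\chi\varphi$, you must approximate with $P\phi_j \to P(\chi\varphi)$ in $H^k_0((-T,T),H^{-s-1})$, which requires genuine regularity of $\varphi$ (roughly $H^{k+2}$ in time with suitable spatial values). With coefficients of only finite regularity this is the delicate point: the paper invokes Smith's parametrix theorem \cite{Smith1998} -- the sole place where the hypothesis $\gamma, a^{jk}\in C^{1,1}(\R^n)$ is used -- to get $\varphi \in L^2 H^{s_0+3}$, and then bootstraps time regularity through $\p_t^2\varphi = \psi - A\varphi$ until $\chi\varphi$ is smooth enough to justify $\langle \gamma w, P(\chi\varphi)\rangle = 0$. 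Your proposal never uses the $C^{1,1}$ assumption, which is the tell-tale sign that this step has been elided; importing the paper's regularity bootstrap for the backward solution closes the gap.
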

\begin{proof}
Consider the space $X = C^{\infty}_c(\R^n \times [0,T))$ and the linear functional 
\[
\ell: (\p_t^2+A)X \to \R, \ \ \ell((\p_t^2+A)\varphi) = (\gamma F, E\varphi)
\]
where $E\varphi$ is any smooth extension of $\varphi$ from $\R^n \times [0,T)$ to $\R^n \times (-T,T)$. The right hand side is well defined since $F|_{\{ t < 0 \}} = 0$. Choosing $E$ to be a bounded extension operator on $H^k$, one has 
\[
\abs{\ell((\p_t^2+A)\varphi)} \lesssim \norm{F}_{H^{-k}((-T,T), H^s)} \norm{\varphi}_{H^k((0,T), H^{-s})}.
\]
Applying Lemma \ref{lemma_wp_first} with $t_0=T$ to $\varphi, \p_t \varphi, \ldots, \p_t^k \varphi$ gives that 
\[
\abs{\ell((\p_t^2+A)\varphi)} \lesssim \norm{F}_{H^{-k}((-T,T), H^s)} \norm{(\p_t^2+A)\varphi}_{H^k((0,T), H^{-s-1})}.
\]
Using the Hahn-Banach theorem and duality, there is $u \in H^{-k}((-T,T), H^{s+1})$ satisfying 
\[
(\gamma u, (\p_t^2+A)\varphi) = \ell((\p_t^2 + A)\varphi) = (\gamma F, E \varphi), \qquad \varphi \in X,
\]
and 
\[
\norm{u}_{H^{-k}((-T,T), H^{s+1})} \lesssim \norm{F}_{H^{-k}((-T,T), H^s)}.
\]
Given any $\psi \in C^{\infty}_c(\R^n \times (-T,0))$ we may find a solution of $(\p_t^2 + A)\varphi = \psi$ with $\varphi(0) = \p_t \varphi(0) = 0$. Then we have $(\gamma u, \psi) = (\gamma F, E \varphi)$ for any extension of $\varphi$, and choosing the zero extension gives $u = 0$ for $t < 0$. Thus $u$ is a distributional solution of \eqref{wp_vanishing_negative_time}.

It remains to prove that any $u \in H^{-k}((-T,T), H^{s+1})$ satisfying 
\[
(\p_t^2+A) u = 0 \text{ in $\R^n \times (-T,T)$}, \qquad u|_{\{t < 0\}} = 0,
\]
must be identically zero. Fix some $F \in C^{\infty}_c(\R^n \times (-T,T))$ and choose $\varphi$ to be a solution of 
\begin{equation*} 
(\p_t^2 + A)\varphi = F \text{ in $\R^n \times (-T,T)$}, \qquad \varphi(T) = \p_t \varphi(T) = 0.
\end{equation*}
Then $\varphi \in L^{\infty} H^{s_0+1}$.
For any $\chi \in C^{\infty}([-T,T])$ with $\chi = 1$ near $[0,T]$ and $\chi = 0$ near $t=-T$ one has 
\[
(u, F) = (u, (\p_t^2 + A)\varphi) = (u, (\p_t^2 + A)(\chi(t) \varphi)).
\]
By hypothesis, we have $\gamma, a^{jk}\in C^{1,1}(\R^n)$ for $j,k=1,2, \ldots,n$. By \cite[Theorem 4.7]{Smith1998}, we conclude that $\varphi$ belongs to $L^2 H^{s_0+3}$. We claim that $\varphi$ has more regularity in time. Indeed, since $\partial_t^2\varphi = F- A\varphi$, we deduce $\varphi\in H^2 H^{s_0+1}$. We can iterate this argument so that by means of the Morrey's embedding one has $\varphi \in C^2((-T, T); \R^n)$ and hence also $\chi(t)\varphi$.
Since $\varphi(T)=\partial_t\varphi(T)=0$ and $\chi(-T)= \chi^\prime(-T)$=0, one can thus integrate by parts and obtain that 
\[
(u, F) = ((\p_t^2 + A) u, \chi(t)\varphi) = 0.
\]
Since $F$ was arbitrary we obtain $u=0$ as required.
\end{proof}

As a consequence, we prove the well-posedness of \eqref{eq:1-LS2020} and \eqref{eq:2-LS2020}.
	
	\subsubsection*{\bf Proof of Proposition \ref{prop:well_posedness_z_1}} We start by setting $\mathcal{U}:= \mathcal{V}+ H_1(t-z)$. Thus, $\mathcal{U}$ is a solution to \eqref{eq:2-LS2020} if and only if $\mathcal{V}$ is a solution to the IVP
\begin{equation} \label{initial_eq_z_z_z_y}
(\partial_t^2 - \eta^{-1} \Delta)  \mathcal{V}=-\eta^{-1} (1- \eta) \delta(t-z) \ \ \text{in} \ \ \R^{n+1}, \qquad  \mathcal{V}|_{\left\{t<-1\right\}} = 0.
\end{equation}
To solve \eqref{initial_eq_z_z_z_y}, we aim to use  Lemma \ref{eq:forward_problem_z} with the following configuration
\[
\gamma=\eta, \quad a^{jk}= \eta^{-1} \delta_{jk}, \quad s=0, \quad k=-1 \quad \text{and} \quad s_0=\left[n/2\right] +
3.
\]
Indeed, let us verify one by one the conditions from Lemma \ref{eq:forward_problem_z}:
\begin{itemize}
\item[-] {\bf Regularity}. By hypothesis, $\eta\in C^{1,1}(\mathbb{R}^n)$. Then, $\gamma$ and $a^{jk}$, $j,k=1, 2, \ldots,n$, defined above also belong to $C^{1,1}(\mathbb{R}^n)$.
\item[-] {\bf Condition \eqref{wave_ellipticity}}. It easily follows by \eqref{outside:ball_z}.
\item[-] {\bf Condition \eqref{sobolev_multiplier}}. It is clear that $s_0>n/2+2$. Set $\widetilde{\eta}:= \eta^{-1}-1\in H^{s_0}(\mathbb{R}^n)$.  Let $s\in \mathbb{R}$ so that $1 \leq \abs{s} \leq s_0$. Let $\varphi\in \mathcal{S}(\mathbb{R}^n)$. Using the fact that $[J^s, \Delta]\equiv 0$, we get
\begin{align*}
\norm{[J^s, \gamma^{-1} \p_j(\gamma a^{jk} \p_k)] \varphi}_{L^2}&= \norm{[J^s, \eta^{-1}\Delta]\varphi}_{L^2}= \norm{[J^s, \widetilde{\eta}\,\Delta]\varphi}_{L^2}\\
& = \norm{J^s\left( \widetilde{\eta} \, \Delta \varphi\right)- \widetilde{\eta}\, J^s(\Delta \varphi)}_{L^2}\\
&  \leq C_{s_0} \norm{\nabla \, \widetilde{\eta}}_{H^{s_0-1}} \norm{\Delta \varphi}_{H^{s-1}}\\
&  \leq C_{s_0} \norm{\nabla \, \widetilde{\eta}}_{H^{s_0-1}} \norm{\varphi}_{H^{s+1}}.
\end{align*}
In the last inequality, we used the second inequality from Lemma \ref{lemma_sobolev_multiplier} with $a=\widetilde{\eta}$ and $u=\Delta \varphi$. Hence, condition \eqref{sobolev_multiplier} is satisfied for  $1 \leq \abs{s} \leq s_0$. The case $s=0$ holds trivially. 
\end{itemize}
On the other hand, by \eqref{outside:ball}, we deduce that the source $\eta^{-1} (1-\eta)\delta(t-z)$ in \eqref{initial_eq_z_z_z_y} belongs to  $H^{-1}((-T, T); L^2(\R^n))$. Hence Lemma \ref{eq:forward_problem_z} ensures that there exists a unique distributional solution
$\mathcal{V}\in  H^{-1}((-T, T); H^1(\R^n))$ to \eqref{initial_eq_z_z_z_y} with
\begin{equation}\label{ine:mistake}
\norm{\mathcal{V}}_{H^{-1}((-T, T); H^1(\R^n))}
\lesssim \norm{\eta^{-1}(1- \eta)\delta(t-z) }_{H^{-1}((-T, T); L^2(\R^n))} \lesssim C(T,M).
\end{equation}
In addition, one can verify that $\mathcal{U}:= \mathcal{V}+ H_1(t-z)$ is a distributional solution to the IVP \eqref{eq:2-LS2020}. Since $H_1(t-z)\in H^{-1}((-T, T); H^1_{loc}(\R^n))$, we deduce $\mathcal{U}\in H^{-1}((-T, T); H^1_{loc}(\R^n)) $. Using \eqref{ine:mistake} we immediately get estimate \eqref{estimate_local_z} for any smooth and compactly supported function $\chi$ in the spatial variable.

A straightforward computation shows that $\partial_t^2 \s\mathcal{U}$ satisfies \eqref{eq:1-LS2020}. By uniqueness of distributional solutions we deduce $\tilde{\mathcal{U}}=\partial_t^2\s\mathcal{U}$. By fixing $\chi\in C^\infty_0(\mathbb{R}^n)$ so that $\chi(x)=1$ when $|x|<2$ and $\chi(x)=0$ when $|x|>3$, and thanks to the trace theorem one has the following sequence of inequalities
\[
\norm{\tilde{\mathcal{U}}|_{\Sigma_T}}_{H^{-3}((-T, T); H^{1/2}({\mathbb{S}}^{n-1}))}  \lesssim \norm{\chi\,\mathcal{U}}_{H^{-1}((-T, T); H^1(\R^n))} \lesssim  C(T,M, \norm{\chi}_{C^1}).
\]
This finishes the proof of Proposition \ref{prop:well_posedness_z_1}.

\subsubsection*{\bf Proof of Proposition \ref{prop:sol_smooth_1}} 
We start by proving the uniqueness. It is reduced to proving that zero is the unique distributional solution to the homogeneous equation
\begin{equation*} 
(\partial_t^2 -\Delta)\, U =0 \; \text{in}\; \R^{n+1}, \quad U \, |_{\left\{t<-1\right\}}=0,
\end{equation*}
which is true by \cite[Theorem 23.2.7]{hormander1983linear}. The method we shall use to prove the existence is the so-called progressing wave expansion method, see e.g. \cite[Lemma 1]{MR778095} and \cite[Theorem 1]{RakeshUhlmann}. For any $j\geq 0$, define
\begin{equation*} 
s_+^j = \left\{\begin{matrix}
 s^j , &s\geq 0, \\ 
 0 , &s<0.
\end{matrix}\right.
\end{equation*}
Note that $s^0_+=H(s)$ is the unidimensional Heaviside function at $s\in \R$. Let $f\in H^{s_0}_{\ol{\Omega}}(\mathbb{R}^n)$. Assume that $f\in C^m_c(\mathbb{R}^n)$, where $m\in \mathbb{N}$ (wich will be fixed later) with $s_0> n/2+m$. This is always possible due to Morrey's inequality and the fact that $\supp(f)\subset \ol{\Omega}$. Consider $N\in\mathbb{N}$ and suppose that the solutions to  $(\partial_t^2 -\Delta) U =-f(x)\delta(t-z)$ have the following ansatz
\begin{equation}\label{pr:wave_exp}
U (x,t)= \sum_{j=0}^N a_j (x)(t- z)_+^j + R_N(x,t),
\end{equation}
where the  coefficients $a_j$, $j=0,1, \ldots, N$, and the remainder term $R_N$ satisfy the initial value conditions
\[
 R_N|_{t<-1}=0, \quad  a_{j}|_{z<-1}=0, \quad j=0, 1, \dots, N.
\]
A straightforward computation shows that the remainder term $R_N$ must hold
\begin{equation}\label{id_recursive_formulae}
\begin{aligned}
(\partial_t^2- \Delta)R_N(x,t)&= - \left(2\, \p_z \,a_0+f\right)\delta(t-z)  +\left( \Delta \, a_N\right)(t-z)^N_+\\
&\quad  \quad - \sum_{j=0}^{N-1}\left(2(j+1) \partial_z \, a_{j+1}-\Delta \, a_j \right)(t-z)^j_+.
\end{aligned}
\end{equation}
The task now is proving the existence of the coefficients $(a_j)_{j=0}^N$ and $R_N$, satisfying the recursive identity \eqref{id_recursive_formulae}. One expects to get a smoother remainder term $R_N$ when $N$ is large enough. This can be done by dropping most of the non-smooth terms on the right of \eqref{id_recursive_formulae}. By standard ODE techniques, it is easy to see that if
\begin{equation}\label{a_0:z}
a_0(y,z)=-\frac{1}{2}\int_{-\infty}^0 f (y, s+z)\, ds,
\end{equation}
and for $k=0,1, \dots,  N-1$: 
\begin{equation}\label{a_0:z_z}
a_{k+1}(y,z)= \frac{1}{2(k+1)}\int_{-\infty}^0  (\Delta \, a_k)(y, s+z)\,ds,
\end{equation}
then the remainder term $R_N$ has to satisfy
\begin{equation}\label{eq:remainder}
(\partial_t^2- \Delta)R_N(x,t)= \left( \Delta \, a_N\right)(t-z)^N_+, \qquad R_N \, |_{\left\{t<-1\right\}}=0.
\end{equation}
Note that $a_k\in C^{m- 2k}(\mathbb{R}^n)$ and
\[
\norm{a_k}_{L^\infty(\R^n)} \lesssim \norm{f}_{C^{m-2k}(\R^n)}.
\]
Setting 
\[
\beta= \min\bk{m-2N-2, N-1},
\]
we deduce that $r (y,z,t):= \left( \Delta \, a_N\right)(t-z)^N_+$ belongs to $C^\beta(\mathbb{R}^{n+1})$.
In particular, $r$ belongs to $H^{\beta_1}_{loc}(\mathbb{R}; H^{\beta_2}_{loc}(\mathbb{R}^n))$ with $\beta_j\geq 0$ and $\beta_1+\beta_2=\beta$. Here both $\beta_1$ and $\beta_2$ will be fixed later.
Since $\partial_t^2-\Delta$ is a strictly hyperbolic operator, \cite[Theorems 9.3.1 and 9.3.2]{hormander1976linear}
ensures that there exists a unique solution $R_N \in H^{\beta_1+1}_{loc}(\mathbb{R}; H^{\beta_2}_{loc}(\mathbb{R}^n))$ to \eqref{eq:remainder} such that for any given $T>-1$ we have
\begin{equation}\label{r_n:z_N}
\left\|R_N\right\|_{H^{\beta_1+1}((-1, T]; H^{\beta_2}(\mathbb{R}^n))}\lesssim \left\|\left( \Delta \, a_N\right)(t-z)^N_+\right\|_{H^{\beta_1}((-1, T]; H^{\beta_2}(\mathbb{R}^n))}.
\end{equation}
We claim that $R_N\in C^2(\R^{n+1})$ by suitably choosing the parameters  $\M$, $N$, $\beta_1$, and $\beta_2$. Indeed, by \cite[Theorem B.2.8/Vol III]{hormander1983linear}, this follows if for instance
\[
\frac{n+3}{2}< \beta_1 + \beta_2=\beta \quad \text{and} \quad \frac{3}{2}<\beta_1,
\]
and furthermore
\begin{equation}\label{sob_emb} 
\begin{aligned}
\norm{R_N}_{C^2((-1,T]\times \R^n)}
& \lesssim \left\|R_N\right\|_{H^{\beta_1+1}((-1, T]; H^{\beta_2}(\mathbb{R}^n))}\\
& \lesssim \left\|\left( \Delta \, a_N\right)(t-z)^N_+\right\|_{H^{\beta_1}((-1, T]; H^{\beta_2}(\mathbb{R}^n))}.
\end{aligned}
\end{equation}

Equating the parameters involved in the definition of $\beta$, that is, $m-2N-2=N-1$; allow us to choose $m=3N+1$, and hence $\beta=N-1$. We distinguish two cases:
\begin{itemize}
\item When $n$ is even  we consider 
\[
N =\frac{n+6}{2}, \quad m=\frac{3}{2}n +10, \quad  \beta_1=2, \quad \beta_2=\frac{n}{2}. 
\]
\item When $n$ is odd we consider 
\[
N =\frac{n+7}{2}, \quad m=\frac{3}{2}(n+1) +10, \quad  \beta_1=2, \quad \beta_2=\frac{n+1}{2}. 
\]
\end{itemize}
The desired claim is proved by combining the above choices with \eqref{sob_emb}. On the other hand, by \eqref{pr:wave_exp} we derive that $U$ can be written as
\(
U(x,t)= u(x,t)H(t-z),
\)
where clearly 
\[
u(x,t)=  \sum_{j=0}^N a_j (x)(t- z)_+^j + R_N(x,t) 
\]
is of class $C^2$  in the region $\bk{t\geq z}$.  This   shows that $u$ satisfies all the properties stated in Proposition \ref{prop:sol_smooth_1}. We point out that taking $s_0$ large enough we can make another choice of the parameters $\beta_1$, $\beta_2$, $m$ and $N$ so that $u$ will be in $C^K$ in the region $\bk{t\geq z}$ for arbitrary $K\geq 3$. This finishes the proof of Proposition \ref{prop:sol_smooth_1}.

\smallskip

For further purposes, and since $f$ is compactly supported in $\R^n$, we take advantage of the representation of the solution to \eqref{pr:wave_exp} to deduce for all $\alpha, \beta\in \mathbb{N}$ that
\[ 
\norm{a_j (t-z)^j_+}_{H^\alpha((-T, T); H^\beta(\R^n))}|_{\{ t > z \}} \lesssim \norm{f}_{H^{2j+ \beta}(\R^n)}, \quad j=0,1, \ldots, N.
\]
Above we used the representation of the coefficients $a_j$ given by \eqref{a_0:z}-\eqref{a_0:z_z}. Analogously to estimate \eqref{r_n:z_N} we also have  for all $\alpha, \beta\in \mathbb{N}$ that
\[
\norm{R_N}_{H^\alpha((-T, T); H^\beta(\R^n))}|_{\{ t > z \}} \lesssim \norm{f}_{H^{2N+ \beta +2}(\R^n)}.
\]
Combining these estimates with the trace theorem, we conclude for all $\alpha, \beta\in \mathbb{N}$ that
\begin{equation}\label{cont:A_o_f}
\norm{U}_{H^{\alpha}((-T,T); H^{\beta-1/2}({\mathbb{S}}^{n-1}))|_{\{ t > z\}}} \lesssim  \norm{f}_{H^{2N+ \beta +2}(\R^n)}.
\end{equation}

\subsubsection*{\bf Relation between $\mathcal{A}$ and $\tilde{\mathcal{A}}$}

Recall that $\tilde{\mathcal{A}}(\eta-1) = \tilde{\mathcal{U}}|_{\Sigma_T}$ and ${\mathcal{A}}(\eta-1) = {\mathcal{U}}|_{\Sigma_T}$, where $\tilde{\mathcal{U}} = \p_t^2\s  \mathcal{U}$ and both $\tilde{\mathcal{U}}$ and $\mathcal{U}$ vanish for $t < -1$. The following result can be used to estimate $\mathcal{A}$ terms of $\tilde{\mathcal{A}}$ and vice versa. This is quite standard, but one needs some care in the case of Sobolev spaces with negative smoothness index.

\begin{Lemma} \label{lemma_u_dttu}
Let $\mathcal{M}$ be a compact smooth manifold without boundary. Let $T > 1$, let $k \in \mathbb{Z}$, and let $\alpha \in \R$. There is $C > 0$ such that 
\[
C^{-1} \norm{u}_{H^{k+2}((-T,T), H^{\alpha}(\mathcal{M}))} \leq \norm{\p_t^2 u}_{H^{k}((-T,T), H^{\alpha}(\mathcal{M}))} \leq C \norm{u}_{H^{k+2}((-T,T), H^{\alpha}(\mathcal{M}))}
\]
for any $u \in H^{k}((-T,T), H^{\alpha}(\mathcal{M}))$ satisfying $u|_{\{t < -1\}} = 0$.
\end{Lemma}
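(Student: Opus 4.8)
The plan is to treat $H^\alpha(\mathcal{M})$ as an abstract separable Hilbert space $\mathcal{H}_0$, with dual $\mathcal{H}_0' = H^{-\alpha}(\mathcal{M})$ under the pairing extending the $L^2(\mathcal{M})$ inner product, so that the statement reduces to a purely one-dimensional assertion about $\mathcal{H}_0$-valued functions of $t$ on $(-T,T)$ vanishing for $t<-1$. All operators below ($\partial_t$ and its antiderivatives) commute with the $\mathcal{H}_0$-structure, so the manifold plays no role beyond providing this Hilbert space and its dual. I write $\mathcal V^k$ for the subspace of $u \in H^k((-T,T);\mathcal{H}_0)$ with $u|_{\{t<-1\}}=0$, and recall that for $k<0$ these spaces are defined by duality against $H^{-k}_0((-T,T);\mathcal{H}_0')$.

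The right-hand (easy) inequality $\|\partial_t^2 u\|_{H^k}\le C\|u\|_{H^{k+2}}$ needs no support hypothesis: it is just boundedness of $\partial_t^2\colon H^{k+2}((-T,T);\mathcal{H}_0)\to H^k((-T,T);\mathcal{H}_0)$, which is immediate for $k\ge 0$ and follows for $k<0$ by dualizing the statement $\partial_t^2\colon H^{-k}_0\to H^{-k-2}_0$. For the left-hand (substantive) inequality I would introduce the left antiderivative $(Ig)(t):=\int_{-T}^t g(s)\,ds$ and show that $I\colon\mathcal V^k\to\mathcal V^{k+1}$ is bounded for every integer $k$, i.e.\ that $I$ gains one derivative on functions vanishing near $-T$. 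Granting this, since $I\partial_t h=h$ whenever $h$ vanishes near $-T$, one has $u=I^2(\partial_t^2 u)$ on $\mathcal V$, and applying the one-derivative gain twice yields $\|u\|_{H^{k+2}}=\|I^2\partial_t^2 u\|_{H^{k+2}}\le C\|\partial_t^2 u\|_{H^k}$.

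For $k\ge 0$ the boundedness of $I$ is the fundamental theorem of calculus together with a Poincaré inequality at the left endpoint: $\partial_t(Ig)=g$ and $(Ig)(-T)=0$ give $\|Ig\|_{L^2}\le 2T\|g\|_{L^2}$ and $\|\partial_t^j Ig\|_{L^2}=\|\partial_t^{\,j-1}g\|_{L^2}$ for $j\ge 1$, whence $\|Ig\|_{H^{k+1}}\lesssim\|g\|_{H^k}$. The hard part will be the case $k=-m<0$, which I would handle by duality. The formal $L^2$-adjoint of $I$ is the right antiderivative $(I^*\psi)(s):=\int_s^T\psi$, so that $\|Iu\|_{H^{-m+1}}=\sup_{\|\psi\|_{H^{m-1}_0}\le 1}|\langle u,I^*\psi\rangle|$. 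The obstacle is that $I^*\psi$ vanishes to high order at $T$ but \emph{not} at $-T$, so it is not a legitimate test function in $H^m_0$. This is exactly where the support condition enters: choosing a cutoff $\chi\in C^\infty(\mathbb{R})$ with $\chi\equiv 0$ near $-T$ and $\chi\equiv 1$ on $[-1,T]$, the inclusion $\operatorname{supp}u\subset[-1,T]$ gives $\langle u,I^*\psi\rangle=\langle u,\chi I^*\psi\rangle$, and now $\chi I^*\psi\in H^m_0((-T,T);\mathcal{H}_0')$, with $\|\chi I^*\psi\|_{H^m_0}\lesssim\|I^*\psi\|_{H^m}\lesssim\|\psi\|_{H^{m-1}_0}$ (using $\partial_t I^*\psi=-\psi$ and a Poincaré inequality at $T$). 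Taking the supremum gives $\|Iu\|_{H^{-m+1}}\lesssim\|u\|_{H^{-m}}$, completing the bound.

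I expect the only delicate bookkeeping to be verifying the endpoint vanishing orders that place $\chi I^*\psi$ in $H^m_0$ (vanishing near $-T$ by the cutoff, vanishing to order $m-1$ at $T$ by construction of $I^*$ from $\psi\in H^{m-1}_0$), and separately treating the borderline case $m=1$, where $\psi\in L^2$ and $I^*\psi$ vanishes only at $T$ but $\chi I^*\psi\in H^1_0$ still holds. Everything is linear and commutes with $\mathcal{H}_0$, so the scalar estimates transfer verbatim to the $\mathcal{H}_0$-valued setting, and the two inequalities together give the asserted norm equivalence with a constant $C=C(T,k)$ uniform in $\alpha$ and $\mathcal M$.
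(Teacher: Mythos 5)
Your argument is correct, but it takes a genuinely different route from the paper's proof. The paper first eliminates the manifold by expanding $u$ in eigenfunctions of $-\Delta_g$ on $\mathcal{M}$, which forces it to establish the Fourier characterization \eqref{u_mixed_norm_fourier} of the mixed norms $\norm{u}_{H^m((-T,T),H^{\alpha})}$ for \emph{every} integer $m$ (the negative-$m$ case requiring its own duality computation); it then applies, coefficient by coefficient, the scalar Poincar\'e-type Lemma~\ref{lemma_poincare_type_inequality}, $\norm{w}_{H^{k+1}((a,b))} \leq C \norm{\p_t w}_{H^k((a,b))}$ for $w \in C^{\infty}_c((a',b])$, whose negative-$k$ case pairs against the right antiderivative $w(t) = -\int_t^b v$ and compensates its failure to vanish at $t=a$ by the polynomial weight split $(t-a)^{-\abs{k}}(t-a)^{\abs{k}}$ — this weight is where the support hypothesis enters for the paper. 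You instead keep $H^{\alpha}(\mathcal{M})$ as an abstract Hilbert space $\mathcal{H}_0$, which is legitimate since $\p_t$, $I$, $I^*$ and the cutoff act only in $t$ and all the norms and dual pairings are of Bochner type; this bypasses the spectral decomposition and the identity \eqref{u_mixed_norm_fourier} entirely, and your smooth cutoff $\chi$ (vanishing near $-T$, equal to $1$ on $[-1,T]$) replaces the paper's weight trick — two essentially equivalent ways of exploiting $u|_{\{t<-1\}}=0$, with the cutoff arguably cleaner, and your identity $u = I^2(\p_t^2 u)$ being the adjoint restatement of applying the paper's Poincar\'e inequality twice. The endpoint bookkeeping you flag does check out: $\psi \in H^{m-1}_0$ gives $\p_t^j(\chi I^*\psi)(T) = -\p_t^{j-1}\psi(T) = 0$ for $1 \leq j \leq m-1$ and $(I^*\psi)(T)=0$, so $\chi I^*\psi \in H^m_0$ with $\norm{\chi I^*\psi}_{H^m_0} \lesssim \norm{\psi}_{H^{m-1}_0}$, including the borderline $m=1$. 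The one point you gloss over is the easy direction at $k=-1$: ``dualizing $\p_t^2 \colon H^1_0 \to H^{-1}_0$'' is not literally available under the paper's conventions (negative-order spaces are only defined as duals), and as in the paper's proof one should instead integrate by parts once, $\abs{(\p_t^2 u, v)} = \abs{(\p_t u, \p_t v)} \leq \norm{u}_{H^1} \norm{v}_{H^1_0}$ — a one-line fix, not a real gap. What your route buys is economy and generality (no spectral theory, constants manifestly independent of $\alpha$ and $\mathcal{M}$); what the paper's route buys is that everything reduces to explicit scalar computations, with the useful identity \eqref{u_mixed_norm_fourier} obtained along the way.
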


The proof uses the following Poincar\'e type inequality.

\begin{Lemma} \label{lemma_poincare_type_inequality}
Let $a < a' < b$ and let $k \in \mathbb{Z}$. There is $C > 0$ so that 
\[
\norm{u}_{H^{k+1}((a,b))} \leq C \norm{\p_t u}_{H^k((a,b))}, \qquad u \in C^{\infty}_c((a',b]).
\]
\end{Lemma}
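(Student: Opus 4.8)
The plan is to treat the regimes $k \geq 0$ and $k \leq -1$ separately, since in the former both sides are ordinary (nonnegative‑order) Sobolev norms, while in the latter they are defined by duality. For $k \geq 0$ the estimate is essentially a one‑dimensional Poincar\'e inequality. Writing $\norm{u}_{H^{k+1}((a,b))}^2 = \sum_{j=0}^{k+1}\norm{\p_t^j u}_{L^2((a,b))}^2$, I would bound each term with $j \geq 1$ directly by $\norm{\p_t^j u}_{L^2} = \norm{\p_t^{j-1}(\p_t u)}_{L^2} \leq \norm{\p_t u}_{H^k((a,b))}$, and the single term $j=0$ by the Poincar\'e inequality $\norm{u}_{L^2} \leq C\norm{\p_t u}_{L^2}$, valid because $u$ vanishes near the left endpoint $a$ (indeed $u(t) = \int_a^t \p_t u$, as $u \in C_c^\infty((a',b])$ with $a < a'$). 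Summing gives the claim with $C = C(a,b,k)$.

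For $k \leq -1$ I write $k = -m$ with $m \geq 1$, so that $H^{k+1}((a,b)) = H^{-(m-1)}((a,b))$ is the dual of $H_0^{m-1}((a,b))$. Since $u$ is smooth it certainly lies in this space, and by density it suffices to test against $\psi \in C_c^\infty((a,b))$, i.e. $\norm{u}_{H^{k+1}} = \sup\{ (u,\psi) : \psi \in C_c^\infty((a,b)),\ \norm{\psi}_{H^{m-1}} \leq 1 \}$. The idea is to move one derivative off $u$ onto an antiderivative of the test function. Setting $\Psi(t) := \int_b^t \psi\,ds$, one has $\Psi' = \psi$ and $\Psi(b) = 0$; integrating by parts and using $u(a) = 0$, $\Psi(b) = 0$ gives $(u,\psi) = -(\p_t u, \Psi)$. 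If $\Psi$ were in $H_0^m((a,b))$ I could conclude immediately via $|(\p_t u,\Psi)| \leq \norm{\p_t u}_{H^{-m}}\norm{\Psi}_{H^m}$ together with $\norm{\Psi}_{H^m} \lesssim \norm{\psi}_{H^{m-1}}$ (the $L^2$‑part of $\Psi$ being controlled by another Poincar\'e estimate using $\Psi(b)=0$).

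The main obstacle is precisely that $\Psi$ need not belong to $H_0^m$: while $\psi \in H_0^{m-1}$ forces $\Psi^{(1)},\dots,\Psi^{(m-1)}$ to vanish at both endpoints and $\Psi$ itself to vanish at $b$, the value $\Psi(a) = -\int_a^b \psi$ is generally nonzero. This is where the hypothesis $a < a'$ enters. Since $u \in C_c^\infty((a',b])$, both $u$ and $\p_t u$ vanish on a neighbourhood of $[a,a']$, so modifying $\Psi$ on $(a,a')$ changes neither the pairing $(\p_t u, \Psi)$ nor the admissibility of the test function. Concretely I would fix a cutoff $\chi \in C^\infty(\R)$ with $\chi \equiv 1$ near $a$ and $\supp \chi \subset (-\infty, a')$, set $\rho := \Psi(a)\,\chi$ and $\tilde\Psi := \Psi - \rho$; then $\tilde\Psi \in C_c^\infty((a,b)) \subset H_0^m((a,b))$, and since $\rho$ is supported in $(a,a')$ where $\p_t u \equiv 0$ one has $(\p_t u, \rho) = 0$, hence $(u,\psi) = -(\p_t u, \tilde\Psi)$. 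Finally $\norm{\tilde\Psi}_{H^m} \leq \norm{\Psi}_{H^m} + \norm{\rho}_{H^m} \lesssim \norm{\psi}_{H^{m-1}}$, using $|\Psi(a)| = |\int_a^b\psi| \leq \sqrt{b-a}\,\norm{\psi}_{L^2}$ for the correction term. Combining, $|(u,\psi)| \leq \norm{\p_t u}_{H^{-m}}\norm{\tilde\Psi}_{H^m} \lesssim \norm{\p_t u}_{H^k}\norm{\psi}_{H^{m-1}}$, and taking the supremum over admissible $\psi$ yields $\norm{u}_{H^{k+1}} \leq C\norm{\p_t u}_{H^k}$.
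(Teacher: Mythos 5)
Your proof is correct, and for $k \geq 0$ it is essentially identical to the paper's (a one-dimensional Poincar\'e inequality at the left endpoint plus the trivial bounds $\norm{\p_t^j u}_{L^2} \leq \norm{\p_t u}_{H^k}$ for $j \geq 1$). For $k \leq -1$ you follow the same skeleton as the paper --- dualize against $\psi \in C^\infty_c((a,b))$, pass to the antiderivative $\Psi$ vanishing at $b$, integrate by parts --- and you run into the same obstruction, namely that $\Psi(a) = -\int_a^b \psi$ need not vanish, so $\Psi \notin H^{m}_0((a,b))$. But you resolve it by a genuinely different device. The paper keeps its antiderivative $w$ unchanged and inserts the weights $(t-a)^{\pm\abs{k}}$, writing $\int (\p_t u)\, w = \int (t-a)^{-\abs{k}} (\p_t u)\, (t-a)^{\abs{k}} w$, and then invokes the bounds $\norm{(t-a)^{-\abs{k}} \p_t u}_{H^k} \leq C \norm{\p_t u}_{H^k}$ and $\norm{(t-a)^{\abs{k}} w}_{H^{-k}_0} \leq C \norm{w}_{H^{-k}}$; the first of these is legitimate precisely because $\p_t u$ is supported in $[a',b]$, but verifying a multiplier bound on the negative-order space $H^k$ takes some care and is asserted in the paper without detail. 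You instead modify the test function: you subtract $\Psi(a)\chi$ with $\chi$ a cutoff equal to $1$ near $a$ and supported to the left of $a'$, which costs nothing in the pairing since $\p_t u \equiv 0$ on $[a,a']$, and which places $\tilde\Psi$ in $C^\infty_c((a,b)) \subset H^m_0((a,b))$ directly (note $\tilde\Psi$ indeed vanishes near $a$ because $\Psi$ is locally constant there, $\psi$ being supported away from $a$), with the correction controlled by $\abs{\Psi(a)} \leq \sqrt{b-a}\, \norm{\psi}_{L^2}$. Your variant is arguably cleaner: it uses the hypothesis $a < a'$ in exactly one transparent place and avoids weighted multiplier estimates on negative-order Sobolev spaces altogether, while the paper's weighted trick, in exchange, leaves the test function untouched; both yield a constant depending only on $a$, $a'$, $b$ and $k$.
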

\begin{proof}
We begin with a simple Poincar\'e inequality: for any $u \in C^{\infty}_c((a,b])$ one has 
\[
\int_a^b u^2 \,dt = \int_a^b \p_t(t-b) u^2 \,dt = -2 \int_a^b (t-b) u \p_t u \,dt \leq \frac{1}{2} \int_a^b u^2 \,dt + 2 \int_a^b (t-b)^2 (\p_t u)^2 \,dt.
\]
Absorbing one term to the left hand side gives the inequality 
\[
\norm{u}_{L^2((a,b))} \leq C \norm{\p_t u}_{L^2((a,b))}, \qquad u \in C^{\infty}_c((a,b]).
\]
This shows that for any $k \geq 0$ 
\[
\norm{u}_{H^{k+1}((a,b))}^2 = \sum_{j=0}^{k+1} \norm{\p_t^j u}_{L^2((a,b))}^2 \leq C \norm{\p_t u}_{H^k((a,b))}^2.
\]

Suppose now that $k \leq -1$. If $v \in C^{\infty}_c((a,b))$, define  
\[
w(t) = -\int_t^b v(s) \,ds.
\]
Then $\p_t w = v$. Using that $\abs{w(t)} \leq C \norm{v}_{L^2((a,b))}$ for $t \in [a,b]$, one has 
\[
\norm{w}_{H^{-k}((a,b))} \leq C \norm{v}_{H^{-k-1}((a,b))}.
\]
For any $u \in C^{\infty}_c((a,b])$ and $v \in C^{\infty}_c((a,b))$, since $w(b) = 0$ we have 
\[
\int_a^b u v \,dt = \int_a^b u \p_t w \,dt = -\int_a^b (\p_t u) w \,dt.
\]
Here $w$ is in $H^{-k}_0([a,b))$ but not necessarily in $H^{-k}_0((a,b))$, 
so we cannot directly use duality to get a bound in terms of $\norm{\p_t u}_{H^k}$. However, if we use the fact that $u$ vanishes for $a \leq t \leq a'$, we have 
\[
\int_a^b (\p_t u) w \,dt = \int_a^b (t-a)^{-\abs{k}} (\p_t u) (t-a)^{\abs{k}} w \,dt \leq \norm{(t-a)^{-\abs{k}} \p_t u}_{H^k} \norm{(t-a)^{\abs{k}} w}_{H^{-k}_0}.
\]
Now $\norm{(t-a)^{-\abs{k}} \p_t u}_{H^k} \leq C \norm{\p_t u}_{H^k}$ and $\norm{(t-a)^{\abs{k}} w}_{H^{-k}_0} \leq C \norm{w}_{H^{-k}} \leq C \norm{v}_{H^{-k-1}}$. We have proved that 
\[
\int_a^b u v \,dt \leq C \norm{\p_t u}_{H^k} \norm{v}_{H^{-k-1}}.
\]
Since $H^{k+1}((a,b))$ is the dual of $H^{-k-1}_0((a,b))$, we obtain the desired inequality also for $k \leq -1$.
\end{proof}

\begin{proof}[Proof of Lemma \ref{lemma_u_dttu}]
By density argument, it is enough to prove the statement is true for $u \in C^{\infty}([-T,T], C^{\infty}(\mathcal{M}))$ with $u|_{\{ t < -1 \}} = 0$. Clearly for $k \geq 0$ 
\[
\norm{\p_t^2 u}_{H^k((-T,T), H^{\alpha})}^2 \leq \norm{u}_{H^{k+2}((-T,T), H^{\alpha})}^2
\]
and by duality for $k \leq -2$ 
\[
\norm{\p_t^2 u}_{H^{k}((-T,T), H^{\alpha})} = \sup_{\norm{v}_{H^{-k}_0} = 1} (\p_t^2 u, v) = \sup_{\norm{v}_{H^{-k}_0} = 1} (u, \p_t^2 v) \leq C \norm{u}_{H^{k+2}((-T,T), H^{\alpha})}.
\]
For the remaining case $k=-1$, note that 
\[
\norm{\p_t^2 u}_{H^{-1}((-T,T), H^{\alpha})} = \sup_{\norm{v}_{H^{1}_0} = 1} (\p_t^2 u, v) = \sup_{\norm{v}_{H^{1}_0} = 1} (\p_t u, \p_t v) \leq C \norm{\p_t u}_{L^2((-T,T), H^{\alpha})}
\]
and use that $\norm{\p_t u}_{L^2((-T,T), H^{\alpha})} \leq \norm{u}_{H^1((-T,T), H^{\alpha})}$.

To prove the converse inequality, let $g$ be some Riemannian metric on $\mathcal{M}$ and let $(\varphi_l)_{l=1}^{\infty}$ be an orthonormal basis of $L^2(\mathcal{M})$ consisting of eigenfunctions of $-\Delta_g$. Write $u_l(t) = (u(t), \varphi_l)_{L^2(\mathcal{M})}$ and $\br{l} = (1+l^2)^{1/2}$. If $m \geq 0$ one has 
\begin{align}
\norm{u}_{H^m((-T,T), H^{\alpha})}^2 &= \sum_{j=0}^m \int_{-T}^T \norm{\p_t^j u(t)}_{H^{\alpha}}^2 \,dt = \sum_{j=0}^m \sum_{l=1}^{\infty} \int_{-T}^T \br{l}^{2\alpha} \abs{\p_t^j u_l(t)}^2 \,dt \notag \\
 &= \sum_{l=1}^{\infty} \br{l}^{2\alpha} \norm{u_l}_{H^m((-T,T))}^2. \label{u_hm_halpha_norm}
\end{align}
For $m \leq -1$, using \eqref{u_hm_halpha_norm} to $\norm{v}_{H^{-m}((-T,T), H^{-\alpha})}$ gives 
\begin{align*}
\norm{u}_{H^m((-T,T), H^{\alpha})} &= \sup_{\norm{v}_{H^{-m}_0((-T,T), H^{-\alpha})} = 1} (u, v)_{L^2} \\
 &=  \sup_{v \in H^{-m}_0, \ \sum_l \br{l}^{-2\alpha} \norm{v_l}_{H^{-m}}^2 = 1} \int_{-T}^T \sum_{l=1}^{\infty} \br{l}^{\alpha} u_l(t) \br{l}^{-\alpha} v_l(t) \,dt \\
 &\leq \sup_{v \in H^{-m}_0, \ \sum_l \br{l}^{-2\alpha} \norm{v_l}_{H^{-m}}^2 = 1} \sum_{l=1}^{\infty} \norm{\br{l}^{\alpha} u_l}_{H^m((-T,T))} \norm{\br{l}^{-\alpha} v_l}_{H^{-m}_0((-T,T))} \\
 &\leq \Big( \sum_{l=1}^{\infty} \br{l}^{2\alpha} \norm{u_l}_{H^m((-T,T))}^2 \Big)^{1/2}.
\end{align*}
Choosing $v_l$ so that 
\[
\norm{\br{l}^{-\alpha} v_l}_{H^{-m}_0((-T,T))} = \frac{\norm{\br{l}^{\alpha} u_l}_{H^m((-T,T))} }{\left( \sum_{l=1}^{\infty} \br{l}^{2\alpha} \norm{u_l}_{H^m((-T,T))}^2 \right)^{1/2}}
\]
gives that 
\begin{equation} \label{u_mixed_norm_fourier}
\norm{u}_{H^m((-T,T), H^{\alpha})}^2 =  \sum_{l=1}^{\infty} \br{l}^{2\alpha} \norm{u_l}_{H^m((-T,T))}^2
\end{equation}
for any $m \in \mathbb{Z}$.

Now let $k \in \mathbb{Z}$. Since $u_l(t) = 0$ for $t < -1$, we obtain from \eqref{u_mixed_norm_fourier} and Lemma \ref{lemma_poincare_type_inequality} (used twice) that 
\[
\norm{u}_{H^{k+2}((-T,T), H^{\alpha})}^2 \leq C \sum_{l=1}^{\infty} \br{l}^{2\alpha} \norm{\p_t^2 u_l}_{H^k((-T,T))}^2 \leq C \norm{\p_t^2 u}_{H^k((-T,T), H^{\alpha})}^2.
\]
This concludes the proof.
\end{proof}


{

\begin{bibdiv}
	\begin{biblist}
		
		\bib{Aco15}{article}{
			author={Acosta, Sebastian},
			title={A control approach to recover the wave speed (conformal factor)
				from one measurement},
			date={2015},
			ISSN={1930-8337},
			journal={Inverse Probl. Imaging},
			volume={9},
			number={2},
			pages={301\ndash 315},
			url={https://doi.org/10.3934/ipi.2015.9.301},
			review={\MR{3356560}},
		}
		
		\bib{AKK17}{incollection}{
			author={Agranovsky, Mark},
			author={Kuchment, Peter},
			author={Kunyansky, Leonid},
			title={On reconstruction formulas and algorithms for the thermoacoustic
				tomography},
			date={2017},
			booktitle={Photoacoustic imaging and spectroscopy},
			publisher={CRC Press},
			pages={89\ndash 102},
		}
		
		\bib{AM15}{article}{
			author={Acosta, Sebasti\'{a}n},
			author={Montalto, Carlos},
			title={Multiwave imaging in an enclosure with variable wave speed},
			date={2015},
			ISSN={0266-5611},
			journal={Inverse Problems},
			volume={31},
			number={6},
			pages={065009, 12},
			url={https://doi.org/10.1088/0266-5611/31/6/065009},
			review={\MR{3350628}},
		}
		
		\bib{BarceloEtAl}{article}{
			author={Barcel\'{o}, Juan~Antonio},
			author={Castro, Carlos},
			author={Luque, Teresa},
			author={Mero\~{n}o, Cristobal~J.},
			author={Ruiz, Alberto},
			author={Vilela, Mar\'{\i}a de la~Cruz},
			title={Uniqueness for the inverse f{i}xed angle scattering problem},
			date={2020},
			ISSN={0928-0219},
			journal={J. Inverse Ill-Posed Probl.},
			volume={28},
			number={4},
			pages={465\ndash 470},
			url={https://doi.org/10.1515/jiip-2019-0019},
			review={\MR{4129366}},
		}
		
		\bib{BFO20}{article}{
			author={Burman, Erik},
			author={Feizmohammadi, Ali},
			author={Oksanen, Lauri},
			title={A finite element data assimilation method for the wave equation},
			date={2020},
			ISSN={0025-5718},
			journal={Math. Comp.},
			volume={89},
			number={324},
			pages={1681\ndash 1709},
			url={https://doi.org/10.1090/mcom/3508},
			review={\MR{4081915}},
		}
		
		\bib{BL12}{book}{
			author={Bergh, J{\"o}ran},
			author={L{\"o}fstr{\"o}m, J{\"o}rgen},
			title={Interpolation spaces: an introduction},
			publisher={Springer Science \& Business Media},
			date={2012},
			volume={223},
		}
		
		\bib{BaylissLiMorawetz}{article}{
			author={Bayliss, Alvin},
			author={Li, Yan~Yan},
			author={Morawetz, Cathleen~Synge},
			title={Scattering by a potential using hyperbolic methods},
			date={1989},
			ISSN={0025-5718},
			journal={Math. Comp.},
			volume={52},
			number={186},
			pages={321\ndash 338},
			url={https://doi.org/10.2307/2008470},
			review={\MR{958869}},
		}
		
		\bib{BaoSymes}{article}{
			author={Bao, Gang},
			author={Symes, William~W.},
			title={A trace theorem for solutions of linear partial differential
				equations},
			date={1991},
			ISSN={0170-4214},
			journal={Math. Methods Appl. Sci.},
			volume={14},
			number={8},
			pages={553\ndash 562},
			url={https://doi.org/10.1002/mma.1670140803},
			review={\MR{1129188}},
		}
		
		\bib{evans2010pde}{book}{
			author={Evans, Lawrence~C.},
			title={Partial differential equations},
			edition={Second},
			series={Graduate Studies in Mathematics},
			publisher={American Mathematical Society, Providence, RI},
			date={2010},
			volume={19},
			ISBN={978-0-8218-4974-3},
			url={https://doi.org/10.1090/gsm/019},
			review={\MR{2597943}},
		}
		
		\bib{FinchRakesh}{article}{
			author={Finch, David},
			author={Rakesh},
			title={Trace identities for solutions of the wave equation with initial
				data supported in a ball},
			date={2005},
			ISSN={0170-4214},
			journal={Math. Methods Appl. Sci.},
			volume={28},
			number={16},
			pages={1897\ndash 1917},
			url={https://doi.org/10.1002/mma.647},
			review={\MR{2170771}},
		}
		
		\bib{HKN08}{article}{
			author={Hristova, Yulia},
			author={Kuchment, Peter},
			author={Nguyen, Linh},
			title={Reconstruction and time reversal in thermoacoustic tomography in
				acoustically homogeneous and inhomogeneous media},
			date={2008},
			ISSN={0266-5611},
			journal={Inverse Problems},
			volume={24},
			number={5},
			pages={055006, 25},
			url={https://doi.org/10.1088/0266-5611/24/5/055006},
			review={\MR{2438941}},
		}
		
		\bib{hormander1976linear}{book}{
			author={H{\"o}rmander, Lars},
			title={Linear partial differential operators: 4th printing},
			publisher={Springer},
			date={1976},
		}
		
		\bib{hormander1983linear}{book}{
			author={H{\"o}rmander, Lars},
			title={The analysis of linear partial differential operators, vol.
				i-iv},
			publisher={Springer-Verlag, Berlin},
			date={1983--1985},
		}
		
		\bib{Hri09}{article}{
			author={Hristova, Yulia},
			title={Time reversal in thermoacoustic tomography---an error estimate},
			date={2009},
			ISSN={0266-5611},
			journal={Inverse Problems},
			volume={25},
			number={5},
			pages={055008, 14},
			url={https://doi.org/10.1088/0266-5611/25/5/055008},
			review={\MR{2501026}},
		}
		
		\bib{ike2005var}{article}{
			author={Ikehata, Ryo},
			title={Local energy decay for linear wave equations with variable
				coefficients},
			date={2005},
			ISSN={0022-247X},
			journal={J. Math. Anal. Appl.},
			volume={306},
			number={1},
			pages={330\ndash 348},
			url={https://doi.org/10.1016/j.jmaa.2004.12.056},
			review={\MR{2132904}},
		}
		
		\bib{Kato1975}{inproceedings}{
			author={Kato, Tosio},
			title={Quasi-linear equations of evolution, with applications to partial
				differential equations},
			date={1975},
			booktitle={Spectral theory and differential equations ({P}roc. {S}ympos.,
				{D}undee, 1974; dedicated to {K}onrad {J}\"{o}rgens)},
			pages={25\ndash 70. Lecture Notes in Math., Vol. 448},
			review={\MR{0407477}},
		}
		
		\bib{KK08}{article}{
			author={Kuchment, Peter},
			author={Kunyansky, Leonid},
			title={Mathematics of thermoacoustic tomography},
			date={2008},
			ISSN={0956-7925},
			journal={European J. Appl. Math.},
			volume={19},
			number={2},
			pages={191\ndash 224},
			url={https://doi.org/10.1017/S0956792508007353},
			review={\MR{2400720}},
		}
		
		\bib{KK11}{incollection}{
			author={Kuchment, Peter},
			author={Kunyansky, Leonid},
			title={Mathematics of photoacoustic and thermoacoustic tomography},
			date={2015},
			booktitle={Handbook of mathematical methods in imaging. {V}ol. 1, 2, 3},
			publisher={Springer, New York},
			pages={1117\ndash 1167},
			review={\MR{3560086}},
		}
		
		\bib{KM91}{article}{
			author={Klibanov, Michael~V.},
			author={Malinsky, Joseph},
			title={Newton-{K}antorovich method for three-dimensional potential
				inverse scattering problem and stability of the hyperbolic {C}auchy problem
				with time-dependent data},
			date={1991},
			ISSN={0266-5611},
			journal={Inverse Problems},
			volume={7},
			number={4},
			pages={577\ndash 596},
			url={http://stacks.iop.org/0266-5611/7/577},
			review={\MR{1122038}},
		}
		
		\bib{KatoPonce}{article}{
			author={Kato, Tosio},
			author={Ponce, Gustavo},
			title={Commutator estimates and the {E}uler and {N}avier-{S}tokes
				equations},
			date={1988},
			ISSN={0010-3640},
			journal={Comm. Pure Appl. Math.},
			volume={41},
			number={7},
			pages={891\ndash 907},
			url={https://doi.org/10.1002/cpa.3160410704},
			review={\MR{951744}},
		}
		
		\bib{KR20}{article}{
			author={Krishnan, Venkateswaran~P.},
			author={Rakesh, Rakesh},
			author={Senapati, Soumen},
			title={Stability for a formally determined inverse problem for a
				hyperbolic {PDE} with space and time dependent coefficients},
			date={2021},
			ISSN={0036-1410},
			journal={SIAM J. Math. Anal.},
			volume={53},
			number={6},
			pages={6822\ndash 6846},
			url={https://doi.org/10.1137/21M1400596},
			review={\MR{4347324}},
		}
		
		\bib{llt1986non}{article}{
			author={Lasiecka, I.},
			author={Lions, J.-L.},
			author={Triggiani, R.},
			title={Nonhomogeneous boundary value problems for second order
				hyperbolic operators},
			date={1986},
			ISSN={0021-7824},
			journal={J. Math. Pures Appl. (9)},
			volume={65},
			number={2},
			pages={149\ndash 192},
			review={\MR{867669}},
		}
		
		\bib{Meronno_thesis}{book}{
			author={Mero{\~n}o, Crist{\'o}bal~J.},
			title={Recovery of singularities in inverse scattering},
			publisher={Thesis (Ph.D.)--Universidad Aut\'{o}noma de Madrid, Madrid},
			date={2018},
		}
		
		\bib{MePoSa20}{article}{
			author={Mero\~{n}o, Crist\'{o}bal~J.},
			author={Potenciano-Machado, Leyter},
			author={Salo, Mikko},
			title={The fixed angle scattering problem with a first-order
				perturbation},
			date={2021},
			ISSN={1424-0637},
			journal={Ann. Henri Poincar\'{e}},
			volume={22},
			number={11},
			pages={3699\ndash 3746},
			url={https://doi.org/10.1007/s00023-021-01081-w},
			review={\MR{4325875}},
		}
		
		\bib{MaSa20}{article}{
			author={Ma, Shiqi},
			author={Salo, Mikko},
			title={Fixed angle inverse scattering in the presence of a {R}iemannian
				metric},
			date={2022},
			ISSN={0928-0219},
			journal={J. Inverse Ill-Posed Probl.},
			volume={30},
			number={4},
			pages={495\ndash 520},
			url={https://doi.org/10.1515/jiip-2020-0119},
			review={\MR{4460195}},
		}
		
		\bib{RakeshSalo2}{article}{
			author={Rakesh},
			author={Salo, Mikko},
			title={Fixed angle inverse scattering for almost symmetric or controlled
				perturbations},
			date={2020},
			ISSN={0036-1410},
			journal={SIAM J. Math. Anal.},
			volume={52},
			number={6},
			pages={5467\ndash 5499},
			url={https://doi.org/10.1137/20M1319309},
			review={\MR{4170189}},
		}
		
		\bib{RakeshSalo1}{article}{
			author={Rakesh},
			author={Salo, Mikko},
			title={The fixed angle scattering problem and wave equation inverse
				problems with two measurements},
			date={2020},
			ISSN={0266-5611},
			journal={Inverse Problems},
			volume={36},
			number={3},
			pages={035005, 42},
			url={https://doi.org/10.1088/1361-6420/ab23a2},
			review={\MR{4068234}},
		}
		
		\bib{RakeshUhlmann}{article}{
			author={Rakesh},
			author={Uhlmann, Gunther},
			title={Uniqueness for the inverse backscattering problem for angularly
				controlled potentials},
			date={2014},
			ISSN={0266-5611},
			journal={Inverse Problems},
			volume={30},
			number={6},
			pages={065005, 24},
			url={https://doi.org/10.1088/0266-5611/30/6/065005},
			review={\MR{3224125}},
		}
		
		\bib{Ruiz}{article}{
			author={Ruiz, Alberto},
			title={Recovery of the singularities of a potential from fixed angle
				scattering data},
			date={2001},
			ISSN={0360-5302},
			journal={Comm. Partial Differential Equations},
			volume={26},
			number={9-10},
			pages={1721\ndash 1738},
			url={https://doi.org/10.1081/PDE-100107457},
			review={\MR{1865943}},
		}
		
		\bib{MR778095}{article}{
			author={Shiota, Takahiro},
			title={An inverse problem for the wave equation with first order
				perturbation},
			date={1985},
			ISSN={0002-9327},
			journal={Amer. J. Math.},
			volume={107},
			number={1},
			pages={241\ndash 251},
			url={https://doi.org/10.2307/2374463},
			review={\MR{778095}},
		}
		
		\bib{Smith1998}{article}{
			author={Smith, Hart~F.},
			title={A parametrix construction for wave equations with {$C^{1,1}$}
				coefficients},
			date={1998},
			ISSN={0373-0956},
			journal={Ann. Inst. Fourier (Grenoble)},
			volume={48},
			number={3},
			pages={797\ndash 835},
			url={http://www.numdam.org/item?id=AIF_1998__48_3_797_0},
			review={\MR{1644105}},
		}
		
		\bib{Stefanov_generic}{article}{
			author={Stefanov, Plamen},
			title={Generic uniqueness for two inverse problems in potential
				scattering},
			date={1992},
			ISSN={0360-5302},
			journal={Comm. Partial Differential Equations},
			volume={17},
			number={1-2},
			pages={55\ndash 68},
			url={https://doi.org/10.1080/03605309208820834},
			review={\MR{1151256}},
		}
		
		\bib{Stein}{book}{
			author={Stein, Elias~M.},
			title={Harmonic analysis: real-variable methods, orthogonality, and
				oscillatory integrals},
			series={Princeton Mathematical Series},
			publisher={Princeton University Press, Princeton, NJ},
			date={1993},
			volume={43},
			ISBN={0-691-03216-5},
			note={With the assistance of Timothy S. Murphy, Monographs in Harmonic
				Analysis, III},
			review={\MR{1232192}},
		}
		
		\bib{SU12}{article}{
			author={Stefanov, Plamen},
			author={Uhlmann, Gunther},
			title={Linearizing non-linear inverse problems and an application to
				inverse backscattering},
			date={2009},
			ISSN={0022-1236},
			journal={J. Funct. Anal.},
			volume={256},
			number={9},
			pages={2842\ndash 2866},
			url={https://doi.org/10.1016/j.jfa.2008.10.017},
			review={\MR{2502425}},
		}
		
		\bib{SU09}{article}{
			author={Stefanov, Plamen},
			author={Uhlmann, Gunther},
			title={Thermoacoustic tomography with variable sound speed},
			date={2009},
			ISSN={0266-5611},
			journal={Inverse Problems},
			volume={25},
			number={7},
			pages={075011, 16},
			url={https://doi.org/10.1088/0266-5611/25/7/075011},
			review={\MR{2519863}},
		}
		
		\bib{SU13multi}{incollection}{
			author={Stefanov, Plamen},
			author={Uhlmann, Gunther},
			title={Multiwave methods via ultrasound},
			date={2013},
			booktitle={Inverse problems and applications: inside out. {II}},
			series={Math. Sci. Res. Inst. Publ.},
			volume={60},
			publisher={Cambridge Univ. Press, Cambridge},
			pages={271\ndash 323},
			review={\MR{3098660}},
		}
		
		\bib{tam1981potential}{article}{
			author={Tamura, Hideo},
			title={On the decay of local energy for wave equations with
				time-dependent potentials},
			date={1981},
			ISSN={0025-5645},
			journal={J. Math. Soc. Japan},
			volume={33},
			number={4},
			pages={605\ndash 618},
			url={https://doi.org/10.2969/jmsj/03340605},
			review={\MR{630627}},
		}
		
		\bib{Taylor_PDE1}{book}{
			author={Taylor, Michael~E.},
			title={Partial differential equations {I}. {B}asic theory},
			edition={Second},
			series={Applied Mathematical Sciences},
			publisher={Springer, New York},
			date={2011},
			volume={115},
			ISBN={978-1-4419-7054-1},
			url={https://doi.org/10.1007/978-1-4419-7055-8},
			review={\MR{2744150}},
		}
		
		\bib{Taylor3}{book}{
			author={Taylor, Michael~E.},
			title={Partial differential equations {III}. {N}onlinear equations},
			edition={Second},
			series={Applied Mathematical Sciences},
			publisher={Springer, New York},
			date={2011},
			volume={117},
			ISBN={978-1-4419-7048-0},
			url={https://doi.org/10.1007/978-1-4419-7049-7},
			review={\MR{2744149}},
		}
		
		\bib{Vodev04}{article}{
			author={Vodev, Georgi},
			title={Local energy decay of solutions to the wave equation for
				nontrapping metrics},
			date={2004},
			ISSN={0004-2080},
			journal={Ark. Mat.},
			volume={42},
			number={2},
			pages={379\ndash 397},
			url={https://doi.org/10.1007/BF02385487},
			review={\MR{2101395}},
		}
		
	\end{biblist}
\end{bibdiv}

}

\end{document}